\documentclass[11pt]{article}

\usepackage[margin=1in]{geometry}
\usepackage[T1]{fontenc}
\usepackage[utf8]{inputenc}
\usepackage{lmodern}
\usepackage{microtype}
\usepackage{amsmath,amssymb,amsthm,mathtools}
\usepackage{enumitem}
\usepackage{aliascnt}
\usepackage[colorlinks=true,linkcolor=blue,citecolor=blue,urlcolor=blue]{hyperref}
\hypersetup{
  pdftitle={Sparse chromatic graphs and the complete-graph triangle bound},
  pdfauthor={Shuyan Chen},
  pdfsubject={Extremal graph theory and graph coloring},
  pdfkeywords={chromatic number, triangles, critical graph, list coloring, sparse graph}
}
\usepackage[nameinlink,capitalise,noabbrev]{cleveref}

\newtheorem{theorem}{Theorem}[section]

\newaliascnt{lemma}{theorem}
\newtheorem{lemma}[lemma]{Lemma}
\aliascntresetthe{lemma}

\newaliascnt{proposition}{theorem}
\newtheorem{proposition}[proposition]{Proposition}
\aliascntresetthe{proposition}

\newaliascnt{corollary}{theorem}
\newtheorem{corollary}[corollary]{Corollary}
\aliascntresetthe{corollary}

\theoremstyle{remark}
\newaliascnt{remark}{theorem}

\aliascntresetthe{remark}

\newcommand{\tri}{\mathrm t}
\newcommand{\ex}{\operatorname{ex}}
\newcommand{\Log}{\operatorname{Log}}
\newcommand{\LogLog}{\operatorname{LogLog}}
\newcommand{\cP}{\mathcal P}

\title{Sparse chromatic graphs and the complete-graph triangle bound}
\author{Shuyan Chen\\
\small Department of Mathematics, University of Manchester\\
\small \texttt{shuyan.chen-2@student.manchester.ac.uk}}
\date{}

\begin{document}
\maketitle

\begin{abstract}
We prove that there is an absolute constant $c>0$ such that every graph of chromatic number at least $r$ and at most $cr^3\log^2 r$ edges contains at least $\binom r3$ triangles.

The proof has three ingredients.  First, a sparse-core argument based on a triangle-sensitive coloring estimate of Harris extracts, from any counterexample, an induced subgraph of order $O(r)$ and chromatic number at least $(1-\eta)r$.  Second, we prove an order-uniform stability theorem: for every $\beta>0$ there is $\gamma>0$, independent of the constant in the linear order bound, such that every sufficiently large $s$-critical graph $J$ of order $O(s)$ with $\omega(J)\le (1-\beta)s$ has at least $\binom s3+\gamma s^3$ triangles.  The proof combines the excess method and the modified-independent-set argument of Fox, Tidor, and Zhang.  Third, we prove the exact bound when the graph contains a clique of order at least $(1-\delta)r$.  This uses a new dense common-palette list analogue of Harris's edge--triangle estimate: if every list occupies a fixed positive proportion of a common palette, then the edge--triangle coloring bound survives up to a constant factor.

\end{abstract}

\section{Introduction}

All graphs are finite and simple, and all logarithms not carrying the truncated symbol $\Log$ are natural.  For a graph $G$, write $\chi(G)$, $e(G)$, $\tri(G)$, and $\omega(G)$ for its chromatic number, number of edges, number of triangles, and clique number.  We call $G$ $s$-critical if $\chi(G)=s$ and every proper induced subgraph has chromatic number less than $s$.  Whenever a critical subgraph is chosen below, it is understood to be induced in the ambient graph under discussion.  Fox, Tidor, and Zhang proved that every sufficiently large $r$-chromatic graph in which every edge lies in sufficiently many triangles contains at least $\binom r3$ triangles \cite{FTZ}.

The global sparsity scale is sharp up to constant factors: the minimum number of edges in a triangle-free graph of chromatic number at least $r$ is $\Theta(r^3\log^2 r)$ \cite{Kim,Nilli,PoljakTuza}; see also the discussion in \cite{FTZ}.  The difficulty is the exact target.  Coloring estimates naturally distinguish $0$ from $\Theta(r^3)$ triangles, whereas the desired statement must exclude even $\binom r3-1$.

Our main result is the following.

\begin{theorem}\label{thm:main}
There is an absolute constant $c>0$ such that, for every integer $r\ge3$, every graph $G$ satisfying
\[
 \chi(G)\ge r,
 \qquad
 e(G)\le cr^3\log^2 r
\]
satisfies
\[
 \tri(G)\ge\binom r3.
\]
\end{theorem}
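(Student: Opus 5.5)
We argue by contradiction along the three-step route described in the abstract. Let $G$ satisfy $\chi(G)\ge r$, $e(G)\le cr^3\log^2 r$ and $\tri(G)<\binom r3$, and assume $r$ is large; small $r$ are handled by shrinking $c$ so that $cr^3\log^2 r$ lies below the minimum number of edges of an $r$-chromatic graph. We replace $G$ by an $r$-critical induced subgraph. This keeps the edge and triangle bounds and gives minimum degree at least $r-1$. Since $\tri(G)<r^3/6$, the triangle-sensitive coloring bound of Harris (in the form $\chi\lesssim \sqrt{e/\log(e^{3/2}/\tri)}$ or a local variant) can be applied. Taking $c$ small, it should show that the vertices lying in few triangles relative to their degree can be colored with $\eta r$ colors.

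\textbf{Step 1 (sparse core).} Peeling off the low-triangle part with $\eta r$ colors, we aim for an induced subgraph $H$ with $\chi(H)\ge(1-\eta)r$ and $|V(H)|\le Kr$, where $K=K(\eta)$. The intended argument is a degeneracy/averaging one. Vertices of degree $\gg r$ are few because $e(G)\le cr^3\log^2r$. Harris's estimate colors the sparse part, so what remains is a dense piece in which most vertices have degree $\Theta(r)$ and triangle degree $\Theta(r^2)$. The bound $\tri<\binom r3$ then caps its size at $O(r)$. We pass to an $s$-critical induced subgraph $J\subseteq H$ with $s\ge(1-\eta)r$ and $|J|\le Kr$.

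\textbf{Step 2 (stability).} We apply the order-uniform stability theorem to $J$ with a fixed $\beta$. If $\omega(J)\le(1-\beta)s$, then $\tri(J)\ge\binom s3+\gamma s^3$. Since $\gamma$ does not depend on $K$, we may choose $\eta$ so small that $\binom{(1-\eta)r}{3}+\gamma(1-\eta)^3r^3>\binom r3$, which is a contradiction. So $J$, and hence $G$, contains a clique $Q$ with $|Q|\ge(1-\beta)(1-\eta)r\ge(1-\delta)r$. The order in which $\beta,\delta,\eta,c$ are chosen is exactly why the uniformity of $\gamma$ in $K$ matters.

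\textbf{Step 3 (near-clique case).} This is the step we expect to be the main obstacle. Let $q=|Q|=r-m$ with $m\le\delta r$. The clique contributes $\binom q3$ triangles, leaving a budget of fewer than $\binom r3-\binom q3\approx mr^2/2$ further triangles. Each outside vertex $v$ with $d_Q(v)=d$ adds $\binom d2$ triangles with $Q$. Every $r$-coloring of $G$ must use $q$ distinct colors on $Q$, and $v$ may reuse any color of a $Q$-vertex it misses. So the task is to color $G-Q$ from lists $L(v)=[r]\setminus\{\text{colors of }N_Q(v)\}$, and $|L(v)|\ge r-d_Q(v)$.

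Write $R$ for the set of outside vertices. If many $v\in R$ had $d_Q(v)\ge(1-\epsilon)r$, the triangle count would exceed the budget, so for most $v$ the list $L(v)$ is a positive proportion of the common palette $[r]$. We then apply the dense common-palette list version of Harris's edge--triangle estimate. It says $G[R]$ is $L$-colorable as long as $e(R)/\log^2$ and $\tri(R)$ are small compared with the list size, up to a constant factor. We use $e(R)\le cr^3\log^2r$ and $\tri(R)<\binom r3$, treat the few heavy vertices separately (they have degree $\ge r-1$ but the budget allows only $O(m)$ of them), and finally choose the permutation of colors on $Q$ to balance the lists. Together these give an $(r-1)$-coloring of $G$, contradicting $\chi(G)\ge r$.

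The delicate part is the exact accounting: trading the triangle deficit $\approx mr^2/2$ against list sizes so that the constant-factor loss in the list lemma is absorbed by $\delta$ and $c$.
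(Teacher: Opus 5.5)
Your three-step architecture is the paper's. Step 2 and the order of constants ($\delta$, then $\beta$, then $\gamma(\beta)$, then $\eta$, then $c$) match the assembly in \cref{sec:assembly}. Disposing of small $r$ via $e(G)\ge\binom r2$ for $r$-chromatic $G$ is also valid, and simpler than the paper's finite minimum.

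The genuine gap is in Step 3, exactly where you locate the difficulty: nothing in your sketch colors the heavy vertices. Write $m=r-q$. A vertex adjacent to all but one vertex of $Q$ has a natural list of size only $m\le\delta r$ (the palette is $[r-1]$, not $[r]$). The budget $\binom r3-\binom q3\approx mr^2/2$ allows up to about $m/(1-\epsilon)^2$ vertices with $d_Q(v)\ge(1-\epsilon)r$, and these may be adjacent to one another. The dense list estimate loses a constant factor, so it says nothing about lists of size $O(\delta r)$. Colorability of this piece is an exact question, and it is precisely where $\binom r3$ is separated from $\binom r3-1$. Choosing the permutation of colors on $Q$ cannot help, since relabeling the colors of $Q$ carries $L$-colorings to $L$-colorings.

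The missing idea is to hand the heavy part back to the exact linear-order theorem:
\begin{enumerate}[label=\textup{(\arabic*)}]
\item Take $Q$ to be a maximum clique (if $|Q|\ge r$ you are done) and put $Z=\{v\notin Q:|N_Q(v)|\ge q/2\}$. The budget gives $|Z|\le 10\delta r$.
\item If $G[Z]$ were not colorable from the natural lists, take an inclusion-minimal non-colorable $U\subseteq Z$. Then \cref{lem:naturalLists} gives $\chi(G[Q\cup U])\ge r$ with $|Q\cup U|\le 2r$. An induced $r$-critical subgraph then has at least $\binom r3$ triangles by \cref{thm:FTZlinear}, a contradiction.
\item So color $Z$ first. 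Every other vertex keeps a list of size at least $r-1-q/2-|Z|\ge(r-1)/3$.
\item The dense palette theorem now applies with triangle input less than $\binom r3-\binom q3\le\delta r^3/2$. It must be this deficit, not $\binom r3$ as you wrote, because the condition $Kt^{1/3}\le p$ fails for $t$ of order $\binom r3$.
\end{enumerate}

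A smaller point concerns Step 1. The $\sqrt e$-type bound you display is of order $r^{3/2}$ up to logarithms here, far above $r$. Also, a degree threshold of order $r$ only bounds the number of high-degree vertices by $O(r^2\log^2 r)$, not $O(r)$. What works is a split at the absolute triangle-degree threshold $r^2/L$. The high part then has fewer than $Lr/2$ vertices. Harris's edge--triangle bound with local triangle bound $r^2/L$ colors the low part with $\eta r/2$ colors, by the uniform decay in \cref{lem:uniformPsi}.
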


This settles Conjecture~6.1 of Fox, Tidor, and Zhang \cite{FTZ}.

We first prove the statement for all sufficiently large $r$; decreasing $c$ deals with the finitely many remaining values.

The proof is assembled from three statements.  The first is a sparse chromatic-core theorem.

\begin{theorem}[Sparse core]\label{thm:core}
For every $\eta>0$ there are constants $c_\eta,L_\eta>0$ such that, for all sufficiently large $r$, the following holds.  If
\[
 \chi(G)\ge r,
 \qquad
 e(G)\le c_\eta r^3\log^2r,
 \qquad
 \tri(G)<\binom r3,
\]
then $G$ has an induced subgraph $H$ satisfying
\[
 \chi(H)\ge(1-\eta)r,
 \qquad
 |H|\le L_\eta r.
\]
Moreover, $H$ may be chosen with bounded independence number, where the bound depends only on $\eta$.
\end{theorem}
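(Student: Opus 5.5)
Pass to an $r$-critical induced subgraph $G'$ of $G$; it has minimum degree at least $r-1$, at most as many edges and triangles as $G$, and chromatic number $r$. So we may assume $G$ is $r$-critical with $e(G)\le c_\eta r^3\log^2 r$ and $\tri(G)<\binom r3\le r^3/6$. The tool is Harris's triangle-sensitive coloring estimate: a graph with maximum degree $\Delta$ (or degeneracy $d$) and $T$ triangles has chromatic number $O\bigl(\Delta/\log(\Delta^2/T)\bigr)$-type bounds. More usefully, I take it in the global form: a graph with $m$ edges and $T$ triangles has $\chi=O\bigl(m^{1/3}/\log^{2/3}(m/T^{2/3})\bigr)$ (Harris's edge--triangle estimate). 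The plan is to peel off vertex sets of small chromatic contribution until only a dense, small core remains. Concretely, fix a threshold $D=Kr$ with $K=K(\eta)$ large. Let $B$ be the set of vertices of degree greater than $D$, and $S=V\setminus B$. Since $e(G)\le c_\eta r^3\log^2 r$, $|B|\le 2c_\eta r^2\log^2 r/K$, which is not yet $O(r)$, so degree alone will not do; instead I partition by local triangle density.

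The key step is the coloring of the "sparse part". Split $V$ into $V_{\rm sp}$, the vertices $v$ whose neighbourhood spans few edges (say $e(G[N(v)])\le \epsilon_\eta d(v)^2$), and $V_{\rm d}$, the rest. For $V_{\rm sp}$ I apply Harris's estimate in its local form (degree-sensitive, triangle-per-vertex version): a graph in which each vertex of degree $d$ lies in at most $\epsilon d^2$ triangles has $\chi\le C d/\log(1/\epsilon)$ roughly, and combined with a dyadic decomposition by degree and the edge budget this gives $\chi(G[V_{\rm sp}])\le \eta r/2$ once $c_\eta$ is small and $\epsilon_\eta$ is chosen appropriately; dyadic classes of degree $d\gg r$ have at most $c_\eta r^3\log^2r/d$ vertices, and the global edge--triangle bound on each class, summed geometrically, stays below $\eta r/4$. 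Next, for the dense vertices: each $v\in V_{\rm d}$ contributes at least $\epsilon_\eta d(v)^2$ triangles, so $\sum_{v\in V_{\rm d}} d(v)^2\le 3\tri(G)/\epsilon_\eta< r^3/(2\epsilon_\eta)$. Since $d(v)\ge r-1$, this yields $|V_{\rm d}|\le O(r/\epsilon_\eta)$, and moreover all but $O(1)$-many-$r$ vertices have degree $O(r)$. Set $H=G[V_{\rm d}]$; then $\chi(H)\ge \chi(G)-\chi(G[V_{\rm sp}])\ge(1-\eta)r$ and $|H|\le L_\eta r$ with $L_\eta\approx 1/\epsilon_\eta$.

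For bounded independence number, I iterate: inside $H$ (order $\le L_\eta r$), repeatedly remove a maximum independent set while it has size at least $A$; each removal costs one colour and removes $A$ vertices, so at most $L_\eta r/A$ colours are lost. Choosing $A=2L_\eta/\eta$ loses at most $\eta r/2$, so after rescaling $\eta$ the residual induced subgraph has $\alpha<A$ and chromatic number at least $(1-\eta)r$.

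The main obstacle is the sparse-part estimate: making Harris's bound give $o(r)$ (specifically $\le\eta r$) colours on $V_{\rm sp}$ uniformly, because vertices of very high degree in $V_{\rm sp}$ can be numerous relative to $r$, and the $\log^2 r$ in the edge budget must be exactly absorbed by the $\log$ gain from low triangle density. I would first try the global edge--triangle form on each dyadic degree class, using that triangles in the class are at most $\tri(G)<r^3/6$ while edges are at most $c_\eta r^3\log^2 r$, so $m^{1/3}/\log^{2/3}(m/T^{2/3})$ with $T\lesssim r^3$ gives $O(c_\eta^{1/3} r)$ per class when $m\gg r^2$, and the small-$c_\eta$ factor provides the needed $\eta$; the delicate part is controlling the number of classes, which I expect to handle by noting that classes with $d\ge r\log^2 r$ have too few vertices to need more than trivially many colours beyond a geometric tail.
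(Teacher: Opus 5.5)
There is a genuine gap, and it sits in the step you flag as the main obstacle: the bound $\chi(G[V_{\mathrm{sp}}])\le\eta r/2$. Your sparseness test $e(G[N(v)])\le\epsilon_\eta d(v)^2$ is relative to the degree. So $V_{\mathrm{sp}}$ can contain vertices whose degree is only a constant factor above $r$ but which lie in order $r^2$ triangles. For such sets, no estimate driven by edge, triangle and degree data can make the chromatic number small.

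Here is an example. Take a clique $Q$ of order $q=\lceil(1-\eta/2)r\rceil$. Give each $u\in Q$ a private independent set $P_u$ of $\lceil r/\sqrt{\epsilon_\eta}\rceil$ new neighbours. Then join every vertex of $\bigcup_u P_u$ to all vertices of a new independent set $W$ of size $r-1$. This graph has:
\begin{enumerate}[label=\textup{(\roman*)}]
\item minimum degree at least $r-1$;
\item $O_\eta(r^3)$ edges, hence at most $c_\eta r^3\log^2r$ for large $r$;
\item exactly $\binom q3<\binom r3$ triangles.
\end{enumerate}
Every vertex passes your test. For $u\in Q$ we have $e(G[N(u)])=\binom{q-1}2<r^2\le\epsilon_\eta d(u)^2$, and all other neighbourhoods are independent. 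So $V_{\mathrm{sp}}=V(G)\supseteq Q$ and $\chi(G[V_{\mathrm{sp}}])\ge(1-\eta/2)r$. The only hypothesis this graph misses is $\chi(G)\ge r$, which plays no role in your estimate for $V_{\mathrm{sp}}$.

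Three further problems compound this.
\begin{enumerate}[label=\textup{(\arabic*)}]
\item The global form $\chi=O(m^{1/3}/\log^{2/3}(m/T^{2/3}))$ that you attribute to Harris is false: for $K_k$ it gives $O(k^{2/3})$.
\item Harris's actual edge estimate (\cref{thm:HarrisEdge}) contains the term $t^{1/3}\LogLog(t^2/y_+^3)/\Log^{2/3}(t^2/y_+^3)$. This is of order $r$ whenever $t\asymp r^3$ and the local triangle bound satisfies $y\gtrsim r^2$. So $T\lesssim r^3$ alone cannot give $O(c_\eta^{1/3}r)$ colours.
\item Even granting a per-class bound of $O(c_\eta^{1/3}r)$, summing over $\Theta(\log r)$ dyadic degree classes loses a factor of $\log r$.
\end{enumerate}

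The missing idea is to threshold the local triangle count at an \emph{absolute} level rather than one relative to degree. The paper sets $y=r^2/L$ with $L=L(\eta)$ large. Let $A$ be the vertices lying in at most $y$ triangles, and $B$ the rest.
\begin{enumerate}[label=\textup{(\alph*)}]
\item Since $\sum_v\tri_G(v)=3\tri(G)<r^3/2$, we get $|B|<Lr/2$ at once, with no need for criticality or minimum degree.
\item $G[A]$ has local triangle bound $y$. So one application of \cref{thm:HarrisEdge} bounds $\chi(G[A])$ by $1+K_H\bigl(O(c_\eta^{1/3}r)+o(r)+r\,x^{1/3}\psi(x^2L^3)\bigr)$, where $x=\tri(G[A])/r^3\le1/6$.
\item By \cref{lem:uniformPsi} the last term is uniformly small once $L$ is large; then choose $c_\eta$ small. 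This gives $\chi(G[A])\le\eta r/2$ with no degree decomposition, hence $\chi(G[B])\ge(1-\eta/2)r$.
\end{enumerate}
Your count of $|V_{\mathrm d}|$ is correct but unnecessary. Your final step of peeling off large independent sets is fine, and coincides with the paper's once $V_{\mathrm d}$ is replaced by $B$.
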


The second ingredient says that, away from a nearly maximum clique, a linear-order critical graph has a fixed cubic surplus of triangles.  The important feature is that the surplus does not depend on the constant $C$ in $|J|\le Cs$.

\begin{theorem}[Order-uniform triangle stability]\label{thm:uniformStability}
For every $\beta>0$ there is $\gamma=\gamma(\beta)>0$ such that the following holds.  For every fixed $C>0$ and all sufficiently large $s$ in terms of $C$ and $\beta$, every $s$-critical graph $J$ satisfying
\[
 |J|\le Cs,
 \qquad
 \omega(J)\le(1-\beta)s
\]
has
\[
 \tri(J)\ge\binom s3+\gamma s^3.
\]
\end{theorem}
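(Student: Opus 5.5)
The plan is to combine the excess method for critical graphs with the modified-independent-set argument of Fox, Tidor, and Zhang, taking care that no constant degrades with $C$. Fix $\beta$ and let $J$ be $s$-critical with $n=|J|\le Cs$ and $\omega(J)\le(1-\beta)s$.

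\emph{Step 1: the baseline $\binom s3$.} Criticality gives minimum degree at least $s-1$, and for every vertex $v$ the link $J[N(v)]$ satisfies $\chi(J[N(v)])\ge \chi(J-v)-\ldots$. Concretely, for any independent set $I$, the graph $J-I$ is $(s-1)$-chromatic, so it is natural to argue by induction along a peeling sequence. I would choose independent sets $I_1,I_2,\dots$ greedily, each maximal in the current graph, so that each removal drops the chromatic number by exactly one (it cannot drop by more, and criticality of a suitable critical subgraph of the remainder lets us continue). The triangles destroyed at stage $j$, when the current chromatic number is $k$, number at least $\sum_{v\in I_j} e(N(v)\cap V_j)$. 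Maximality forces every outside vertex to have a neighbour in $I_j$, and a Fox--Tidor--Zhang style modification lets us choose $I_j$ so that this count is at least $\binom{k-1}2$. Summing over $k$ recovers $\binom s3$. The \emph{excess} at a stage is the amount by which the destroyed-triangle count exceeds $\binom{k-1}2$.

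\emph{Step 2: locating cubic excess.} If every stage had excess $o(k^2)$ for a linear proportion of the stages $k\in[(1-\beta/2)s,s]$, we would want to conclude that $J$ is close to having a large clique. The modified-independent-set argument works as follows. At a stage with small excess, take the chosen $I_j$ and a vertex $v$ in it. Swapping $v$ for its non-neighbours produces alternative independent sets, each of which would also have to achieve near-minimal count. This forces the neighbourhood structure of $I_j$ to look like that of a single vertex of $K_k$: essentially every vertex of $I_j$ has degree about $k-1$ into the rest, with those neighbourhoods nearly cliques. Iterating over many consecutive stages yields a set of about $\beta s/2$ vertices whose common structure spans a near-clique of order at least $(1-\beta/2)s$. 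A cleaning step using the removal lemma, or a direct degree count in the near-clique, then produces an actual clique larger than $(1-\beta)s$, contradicting $\omega(J)\le(1-\beta)s$. Hence at least $\epsilon s$ stages among the top $\beta s/2$ have excess at least $\epsilon s^2$, giving $\gamma=\epsilon^2$.

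\emph{Step 3: making $\gamma$ independent of $C$.} This is the main obstacle. In Step 2 the natural averaging is over all $n\le Cs$ vertices, so a small-excess conclusion like ``most vertices look clique-like'' would lose a factor of $C$. My fix is to work only locally in the top range of stages. There, each stage's independent set can be chosen small: its size is at most $\alpha(J)$, and by the previous stage one may restrict attention to the vertices of low degree. The stability conclusion is then about the neighbourhood of one vertex, which has size comparable to $s$ whenever the excess is small, because degree much larger than $s$ already gives excess, as $\delta(J[N(v)])$ is large in a critical graph. Thus every quantity in Step 2 is measured within sets of order $O(s)$ with constants depending only on $\beta$, and $C$ enters only through the requirement that $s$ be large, so that error terms $o(s^2)$ absorb it. The point I expect to need the most care is that high-degree vertices, where $\deg v\ge(1+\beta')s$, automatically contribute $\Omega_\beta(s^2)$ extra triangles at their stage. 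If this fails, I would instead bound the excess through $e(J[N(v)])\ge \frac{s-2}{2}\deg v$, which follows from the neighbourhood of a critical graph being $(s-2)$-degenerate-free.
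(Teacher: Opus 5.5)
Your proposal has a genuine gap in each of Steps~2 and~3. (Step~1 also needs \emph{maximum} rather than maximal independent sets: the Tur\'an count $\sum_{v\in I_j}\bigl(d_v^2/(2\alpha)-d_v/2\bigr)$ reaches $\approx\binom{k-1}2$ only when $|I_j|=\alpha$. That point is minor.)

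\textbf{Step 2.} The structural conclusion is false, and the final move cannot work. At a stage with $k_j=|I_j|\ge2$, small excess says each link $J[N(v)]$, $v\in I_j$, has about $k-1$ vertices and about $(k-1)^2/(2k_j)$ edges. So it is close to the Tur\'an extremal graph for independence number $k_j$: a disjoint union of $k_j$ cliques of order about $(k-1)/k_j$, not a near-clique. Swapping vertices of $I_j$ does not change this. Only stages with $k_j=1$, where the current critical graph is itself a clique, have clique links. Moreover, a near-clique in the density or co-degree sense need not contain a clique of comparable order: the complement of a perfect matching has clique number half its order. So no removal-lemma or degree-count cleaning produces a clique larger than $(1-\beta)s$.

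The missing idea is that the contradiction comes from the chromatic number, via a local-sparsity colouring lemma, not from building a clique. In \cref{prop:fixedStability} the paper keeps the low-excess vertices in the layers with $k_i\le K$ and orders them by layer to get a degeneracy order. It then checks that every back-neighbourhood $N^+(v)$ spans at most $(1-\delta)\binom D2$ edges. For $k_i\ge2$ this holds because the link has at most $s^2/4+o(s^2)$ edges. For $k_i=1$ it holds because $N^+(v)$ is a clique of order at most $(1-\beta)s$; this is the only place the clique hypothesis enters. Together with a degree drop outside the last few layers, \cref{lem:degenerate} gives $\chi(H)\le(1-\zeta)D$, contradicting $\chi(H)=(1-o(1))D$.

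\textbf{Step 3.} The claims you use to make $\gamma$ independent of $C$ fail. The early independent sets are not small: $|I_0|=\alpha(J)\ge|J|/s$ can be of order $C$. A vertex of degree $d=(1+\beta')s$ at stage $i$ gains only $\bigl(d^2-(s-i-1)^2\bigr)/(2k_i)$ excess, which is $O(s^2/C)$ when $k_i\approx C$, not $\Omega_\beta(s^2)$. Correspondingly, in the excess argument the cutoff $K>C/\theta$, and hence $\zeta$ and $\gamma$, depend on $C$. Your fallback $e(J[N(v)])\ge\frac{s-2}{2}\deg v$ is false: in a triangle-free critical graph every link is edgeless.

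The paper removes the dependence on $C$ by a different mechanism. It peels independent sets of maximum \emph{modified} weight, with a bonus factor $1+\varepsilon^2/10$ for meeting a clique of order at least $\varepsilon s$. \cref{lem:modifiedWeight} shows that each step destroys at least $(1+\vartheta)\binom{s-i-1}2-m_i$ triangles, as long as the current $(s-i)$-critical graph satisfies two conditions:
\begin{itemize}
\item its order is at least $C_0(s-i)$ with $C_0=1000\varepsilon^{-6}$;
\item its clique number is at least $\varepsilon(s-i)$.
\end{itemize}
This multiplicative surplus accumulates until a stopping condition hands off in one of three ways:
\begin{itemize}
\item to the Ramsey--Tur\'an \cref{lem:smallCliqueSurplus}, with absolute $\sigma_0$, when the clique number becomes small;
\item to \cref{prop:fixedStability} (or \cref{thm:FTZlinear}), applied with the \emph{fixed} order constant $C_0$, when the order drops;
\item to the accumulated surplus alone, once $s-i\le as$.
\end{itemize}
Thus $\gamma$ depends only on $\beta$ and the absolute choice of $\varepsilon$, and $C$ enters only through the error term $e(J)=O_C(s^2)$.
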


The third ingredient treats the complementary near-clique regime exactly.

\begin{theorem}[Near-clique case]\label{thm:nearClique}
There are absolute constants $c_0,\delta_0>0$ such that, for all sufficiently large $r$, every graph $G$ satisfying
\[
 \chi(G)\ge r,
 \qquad
 e(G)\le c_0r^3\log^2r,
 \qquad
 \omega(G)\ge(1-\delta_0)r
\]
has at least $\binom r3$ triangles.
\end{theorem}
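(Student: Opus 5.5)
We argue by contradiction and work inside the neighborhood structure of a large clique. Suppose $G$ satisfies the hypotheses but $\tri(G)<\binom r3$. Fix a clique $K$ of order $k=\omega(G)\ge(1-\delta_0)r$. If $k\ge r$, then $K$ alone gives $\binom r3$ triangles, so we may assume $k=r-d$ with $1\le d\le\delta_0 r$. Since $K$ already supplies $\binom k3$ triangles, the remaining budget is
\[
\binom r3-\binom k3\approx d\,r^2/2 .
\]
The plan is to show that $\chi(G)\ge r$ forces more than this number of additional triangles.

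\textbf{Step 1: outside vertices are almost isolated from $K$.} For $v\notin K$, let $a_v=|N(v)\cap K|$. Such a vertex creates $\binom{a_v}{2}$ triangles with edges of $K$. Summing over $v$, at most $O(dr^2)$ triangles are available, so all but $O(dr^2/(\epsilon r)^2)=O(d/\epsilon^2)$ vertices satisfy $a_v\le \epsilon r$. Call the remaining vertices heavy; there are only $O(d/\epsilon^2)$ of them.

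\textbf{Step 2: reduce to a list-coloring problem.} Set $G'=G-K$. We try to extend a coloring of $K$ with $k$ colors to all of $G$ using $r-1$ colors in total, which would contradict $\chi(G)\ge r$. Heavy vertices receive $d-1$ fresh colors; if $d$ is small relative to the number of heavy vertices, we instead remove them and argue directly, since a few vertices change $\chi$ by only a bounded amount. Each light vertex $v$ then gets the list $L(v)$ of colors in the $k$-palette not used on its $K$-neighbors, so
\[
|L(v)|\ge k-\epsilon r\ge(1-2\epsilon)r .
\]
Every list is therefore a large fixed proportion of a common palette of size about $r$. This is precisely the setting of the dense common-palette analogue of Harris's edge--triangle bound advertised in the abstract. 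That bound says that if $G'$ has $m$ edges and $t$ triangles, then it is $L$-colorable provided the lists have size at least a constant times the Harris bound, roughly
\[
\frac{C\sqrt{m}}{\log m}\quad\text{or}\quad \frac{C\,t^{1/3}}{\log t}.
\]
Here $m\le c_0r^3\log^2 r$ gives a requirement of about $C\sqrt{c_0}\, r^{3/2}\log r/\log r$, which is the wrong scale. So we must use the sharper form of Harris's estimate, which is stated in terms of the degrees together with the local triangle counts. The input that makes this work is that $G'$ has $O(dr^2)\le\delta_0 r^3$ triangles, while the edge bound $c_0r^3\log^2 r$ is matched with lists of size $\Theta(r)$ only through the constant $c_0$. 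Choosing $c_0,\delta_0,\epsilon$ small enough should make the list bound strictly below $(1-2\epsilon)r$.

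\textbf{Main obstacle.} The hardest step is proving this common-palette list version of Harris's estimate with only a constant-factor loss. Harris's proof uses a semi-random (nibble or local lemma) coloring, in which a color's probability of surviving at a vertex depends on that vertex's list. When all lists occupy at least a $(1-2\epsilon)$ proportion of one palette, the color-degree calculations should be uniform up to constants. I would first attempt to rerun Harris's argument with uniform random colors drawn from the common palette and discard colors outside each list, which costs only a constant factor. If that fails, the fallback is a Johansson-type nibble adapted to lists.

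A secondary issue is the bookkeeping in the case where $d$ is small, with few colors spare to absorb heavy vertices. There I would recolor by swapping vertices of $K$: a heavy vertex with $a_v$ close to $k$ nearly completes a larger clique, and it must then contribute about $r^2/2$ extra triangles, which already exceeds the remaining budget when $d$ is bounded.
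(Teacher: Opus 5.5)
\textbf{The gap is in your treatment of the heavy vertices.} Your handling of the light vertices matches the paper. You precolor a maximum clique, note that vertices with few clique-neighbours keep lists occupying a constant fraction of a common palette of size about $r$, and invoke the dense common-palette analogue of Harris's edge--triangle bound (\cref{thm:densePalette} with $\rho=1/3$, $A=4$). The bound you need is $K(m^{1/3}/\Log^{2/3}m+t^{1/3})\le p$, not $\sqrt m/\log m$. With $m\le c_0r^3\log^2r$ and $t<\delta_0r^3/2$ it gives $O(c_0^{1/3}r+\delta_0^{1/3}r)$ directly, so no degree-refined version is needed. The heavy vertices are another matter, and each of your three devices fails:
\begin{itemize}
\item Reserving only the $d-1$ fresh colors for them cannot work. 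For $d=1$ there are no fresh colors, yet up to about $1/\epsilon^2$ heavy vertices can exist. In general $|Z|=O(d/\epsilon^2)$ can far exceed $d-1$.
\item Removing them because ``$\chi$ changes by a bounded amount'' is fatal for an exact statement. You need a genuine $(r-1)$-coloring of all of $G$, and losing even one color destroys the contradiction.
\item The swapping heuristic is numerically false. For $d=1$ the budget is $\binom{r-1}{2}$, while a vertex with $a_v=k-1$ creates only $\binom{r-2}{2}$ triangles with $K$. Vertices with $a_v\approx\epsilon r$ create only about $\epsilon^2r^2/2$ each.
\end{itemize}

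\textbf{The missing idea.} Give every vertex outside $K$ the natural list $X\cup(K\setminus N_K(v))$ with $|X|=d-1$. Then an $L$-coloring of $G-K$ is exactly an $(r-1)$-coloring of $G$. Moreover, if some $U\subseteq G-K$ is not $L$-colorable, then $\chi(G[K\cup U])\ge r$. Call $v$ heavy when $a_v\ge k/2$. Then $|Z|\binom{\lfloor k/2\rfloor}{2}<\binom r3-\binom k3\le d\binom r2$ gives $|Z|\le10\delta_0 r$. Suppose $G[Z]$ were not $L$-colorable, and take a minimal bad $U\subseteq Z$. Then $G[K\cup U]$ is $r$-chromatic on at most $2r$ vertices. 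The exact Fox--Tidor--Zhang theorem for $r$-critical graphs of linear order (\cref{thm:FTZlinear}), applied to an $r$-critical subgraph, yields $\binom r3$ triangles, a contradiction. So color $Z$ from its lists. Then delete from each light list the colors used on its $Z$-neighbours, which leaves at least $r-1-k/2-|Z|\ge(r-1)/3$ colors. Finally apply the dense palette theorem to the rest.

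This exact linear-order critical-graph input on the small heavy part is what turns constant-factor coloring estimates into the exact $\binom r3$ bound. Without it your argument does not close. On the step you flag as the main obstacle: the paper proves the dense palette theorem without a nibble. It uses palette reservation, deletion of independent sets sharing a common available color (Tur\'an plus Cauchy--Schwarz) with color retirement, and Vu's locally sparse list-coloring theorem.
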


To prove \cref{thm:nearClique}, we precolor a large clique and obtain lists on the remaining vertices.  These lists are dense subsets of one common palette.  The following theorem is a dense common-palette list analogue of Harris's edge--triangle estimate \cite{Harris}.  Only the instance $(\rho,A)=(1/3,4)$ is used in the proof of the main theorem; the general form records the parameter inheritance needed by the proof.

For every real $x\ge0$, put
\[
 \Log x=\max\{1,\log(\max\{x,1\})\},
 \qquad
 \LogLog x=\Log(\Log x).
\]
Thus $\Log 0=\LogLog 0=1$.  Whenever a nonnegative local-triangle
parameter $y$ occurs in a denominator, we write
\[
 y_+=\max\{1,y\}.
\]
With these conventions the estimates below also cover $m=0$, $t=0$, and
$y=0$ without changing their nontrivial ranges.  Degree and triangle thresholds may be real; whenever an integer layer index is required, the floor or ceiling is displayed explicitly.

\begin{theorem}[Dense common-palette coloring]\label{thm:densePalette}
For every $\rho,A>0$ there are constants $K=K(\rho,A)$ and $p_0=p_0(\rho,A)$ such that the following holds whenever $p\ge p_0$.  Let $F$ be a graph with at most $p^A$ nonisolated vertices, $m$ edges, and $t$ triangles.  Let $L(v)$ be lists from a common palette $\cP$ of $p$ colors, with
\[
 |L(v)|\ge\rho p
 \qquad(v\in V(F)).
\]
If
\[
 K\left(
   \frac{m^{1/3}}{\Log^{2/3}m}+t^{1/3}
 \right)\le p,
\]
then $F$ is $L$-colorable.
\end{theorem}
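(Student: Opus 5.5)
The plan is to reduce to Harris's edge--triangle estimate through a local-degree decomposition, and to use the common palette to replace the missing list version of the ``many colors per vertex'' step by a random restriction. Recall Harris's theorem: a graph with $m$ edges and $t$ triangles has chromatic number $O\bigl(m^{1/3}/\Log^{2/3}m+t^{1/3}\bigr)$. Its proof is semi-random, Molloy--Reed style coloring on the part of the graph where every vertex has local triangle count $y$ small compared with $d^2$, together with a greedy or degeneracy treatment of the remaining few vertices. I would rerun this argument with lists.

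\textbf{Step 1: peeling.} Let $d_0=p/(4K')$ for a suitable $K'$. Repeatedly remove vertices of degree at most $\rho p/4$ (say) into a set $R$. These are colored last, greedily, in reverse removal order. Each such vertex sees at most $\rho p/4<|L(v)|$ already-colored neighbours when it is colored, so this always succeeds.

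\textbf{Step 2: the core.} The remaining core $F'$ has minimum degree at least $\rho p/4$. Hence $|F'|\le 2m/(\rho p/4)=O(m/p)$, and the hypothesis gives $m\le (p/K)^3\Log^2 m$. Split the core into layers by degree $d$ and local triangle number $y_v=$ the number of edges inside $N(v)$. Since $\sum_v y_v=3t\le 3(p/K)^3$, the vertices with $y_v\ge d_v^2/\Log d_v$ (``dense'' vertices) number few. I would show there are at most $O(p/K^{2})$ of them with degree below $p$, using $t$, and treat them by Step 1 style peeling relative to their degree. The bound on the number of nonisolated vertices, $p^A$, is used here: it gives $\Log m=O(A\log p)$, so logarithms of degrees and of the number of vertices are comparable to $\log p$, and union bounds in the nibble survive with $p_0$ depending on $A$.

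\textbf{Step 3: list nibble on sparse vertices.} For the sparse vertices, whose degree is at most $\Delta\approx(p/K)^{3/2}\Log m$ after removing high-degree vertices (there are at most $2m/\Delta$ of those, which is $\ll\rho p$, so they can be colored greedily first with their lists shrinking only by $o(\rho p)$), I apply the list version of the local sparsity coloring theorem of Alon--Krivelevich--Sudakov/Davies et al. That theorem says: if every list has size at least $(1+o(1))\,d/\log\bigl(d/\sqrt{y_+}\bigr)$, the graph is list colorable. The key point is that the degree--triangle relation delivered by Harris's layering gives exactly $d/\log(d/\sqrt{y_+})\le \rho p/2$ once $K$ is large in terms of $\rho$. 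Since our lists have size $\ge\rho p$, losing a factor $\rho$ costs only a constant in $K$.

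\textbf{Main obstacle.} The hard part is that Harris's original argument colors different layers with \emph{disjoint} color classes (summing $\sum_j d_j/\log d_j$), and this is impossible with lists. I would instead randomly partition the common palette $\cP$ into $O(\log\log p)$ parts, with part sizes geometrically weighted to match the layers. By Chernoff each $L(v)$ meets its assigned part in about a $\rho$ fraction, which is where the common palette and the density assumption are essential. I expect verifying that the geometric weights sum to a constant, rather than losing a $\LogLog$ factor, to be the delicate calculation.
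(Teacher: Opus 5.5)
\textbf{There is a genuine gap in Step 3.} Splitting into high-degree vertices coloured greedily and low-degree vertices coloured by a Vu-type bound cannot work for any threshold $\Delta$.

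A Vu-type bound with lists of size $\rho p$ forces $\Delta/\Log(\Delta^2/y_+)\lesssim\rho p$, hence $\Delta=O_\rho(p\log p)$. At your threshold $\Delta\approx(p/K)^{3/2}\Log m$ it would demand lists of size about $p^{3/2}$, since $\log(d/\sqrt{y_+})\le\log d=O(\log p)$. So no layering can give $d/\log(d/\sqrt{y_+})\le\rho p/2$ there.

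Conversely, your claim $2m/\Delta\ll\rho p$ is false. With $m$ as large as $(p/K)^3\Log^2m$, your threshold gives $2m/\Delta\approx2(p/K)^{3/2}\Log m$. Greedy precolouring needs fewer than about $\rho p$ high-degree vertices, i.e.\ $\Delta\gtrsim m/(\rho p)\approx p^2\log^2p/K^3$. The two requirements are incompatible.

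Concretely (for $A\ge2$), take a $d$-regular bipartite graph on $p^2/K^2$ vertices with $d=p\log^2p/K$. It satisfies the hypothesis with $t=0$ for every $K$, and peeling removes nothing. For any $\Delta$, either all $p^2/K^2\gg\rho p$ vertices are high-degree, or Vu needs lists of order $p\log p/K$.

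Your dense-vertex step has the same defect. For $\rho p/4\le d_v\le p$ the count is $O(p\log p/(\rho^2K^3))$, not $O(p/K^2)$. Vertices of larger degree lying in many triangles are not treated at all.

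\textbf{What is missing} is a list version of Harris's vertex estimate $O(\sqrt{n/\Log n}+\cdots)$ for the up to $\Theta(p^2\log p/K^2)$ vertices of degree above about $p\log p$. You likewise need a list version of his vertex--triangle estimate for the vertices lying in many triangles. Both rest on a deletion phase: while some vertex $v$ has large degree, colour a large independent subset of $N(v)$ with a single colour and remove it.

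With lists this requires an independent set whose lists share a colour. This, not the palette splitting, is where density in a common palette is essential. Averaging Tur\'an's bound over colours with Cauchy--Schwarz (\cref{lem:weightedIndependent}) gives such a set of size $\gtrsim\rho^2|S|^2/(y+|S|)$. Each used colour is retired inside a reserved block (\cref{lem:reservation}), so Vu's theorem still has $\Omega(\rho p)$ untouched colours afterwards.

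The paper builds the chain of dense vertex-local, layered, vertex-triangle, edge-local and edge-triangle estimates (\cref{lem:denseVertex,lem:denseLayered,lem:denseVertexTriangle,lem:denseEdgeLocal,lem:denseEdgeTriangle}). The high-degree part is coloured by the vertex-local estimate. The set of vertices in at least $z=\max\{1,t^{2/3}\Log^{1/3}m/m^{1/3}\}$ triangles is coloured by the vertex-triangle estimate. The theorem follows by taking $y=t$, where the $\LogLog$ factor disappears under the truncated logarithm.

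So the obstacle you flag, avoiding a $\LogLog$ loss in geometric palette weights, is not the real difficulty for this statement. Your random palette partition matches the paper's routine reservation step, and your peeling and use of Vu on the bounded-degree residue are fine.
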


The organization is as follows.  In \cref{sec:tools} we record the results of Fox--Tidor--Zhang, Harris, and Vu used below.  The sparse core is proved in \cref{sec:core}.  The full dense common-palette proof and the near-clique theorem occupy \cref{sec:palette,sec:near}.  In \cref{sec:stability} we prove first a fixed-order stability statement by refining the excess argument of Fox--Tidor--Zhang, and then remove the dependence of the surplus on the order constant using their modified-weight deletion.  The proof of \cref{thm:main} is given in \cref{sec:assembly}.

\section{Tools from coloring theory}\label{sec:tools}

We use four inputs.  The first two are exact triangle results for critical graphs of linear order.

\begin{theorem}[Fox--Tidor--Zhang]\label{thm:FTZlinear}
For every fixed $C>0$ and all sufficiently large $s$, every $s$-critical graph $J$ with $|J|\le Cs$ satisfies
\[
 \tri(J)\ge\binom s3.
\]
\end{theorem}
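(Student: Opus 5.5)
This statement is the Fox--Tidor--Zhang theorem for critical graphs of linear order, and the paper imports it from \cite{FTZ} rather than reproving it. The plan below is the route I would follow to reconstruct it: an excess argument driven by the degree constraints that criticality imposes.

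\emph{Setup.} Let $J$ be $s$-critical with $n=|J|\le Cs$. Criticality gives two facts. Every vertex has degree at least $s-1$. Every edge $uv$ lies in many triangles: in an $(s-1)$-coloring of $J-uv$, the endpoints $u,v$ share a color, and the standard recoloring argument shows $uv$ lies in at least $s-2-(n-s)$ triangles relative to the surplus. We write $n=s+k$ with $0\le k\le (C-1)s$ and split into cases according to $k$.

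\emph{Small surplus, $k\le \varepsilon s$.} The key observation is that $\alpha(J)$ is bounded in terms of $C$. For an independent set $I$, the coloring of $J-I$ uses at least $s-1$ colors on $n-|I|$ vertices. Combining this with Gallai's structure theorem for critical graphs on few vertices (the complement of $J$ is disconnected, or $n\ge 2s-1$), we obtain a decomposition. Either $J$ is a join $J_1\vee J_2$ of smaller critical graphs, in which case we induct on the triangle count using
\[
\tri(J_1\vee J_2)=\tri(J_1)+\tri(J_2)+e(J_1)|J_2|+|J_1|e(J_2),
\]
and this sum is at least $\binom s3$ by convexity of $\binom{\cdot}{3}$. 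Or $J$ has complement connected, so $n\ge 2s-1$, which contradicts $k\le\varepsilon s$ for $s$ large.

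\emph{Large surplus, $k>\varepsilon s$.} Here we use counting. Since $\delta(J)\ge s-1$, summing over edges the number of triangles through each edge yields
\[
3\tri(J)\ge \sum_{uv\in E}\bigl(d(u)+d(v)-n\bigr).
\]
Together with Goodman-type bounds, this gives $\tri(J)$ of order $n(s-1)^2$ minus lower-order terms. This exceeds $\binom s3$ once $n$ is a constant factor above $s$, since the surplus vertices contribute about $k s^2/2$ extra triangles.

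\emph{Main obstacle.} The hard part is the intermediate range, where $k$ is a small constant times $s$. Here neither the Gallai decomposition nor the crude degree count is sharp. I would first try the excess method: define the excess $\sum_v (d(v)-(s-1))$, and show that each unit of excess buys about $s^2/2$ triangles while each missing triangle forces a vertex of large excess. The Fox--Tidor--Zhang modified-independent-set deletion would handle vertices of low excess. Because this step is delicate, in the paper we cite \cite{FTZ} for the statement rather than reproving it.
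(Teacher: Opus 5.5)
Insofar as your argument is the final citation, it agrees with the paper. The paper's whole proof is: $|J|\le 2s-2$ is \cite[Corollary~3.2]{FTZ}, and the entire range $2s-1\le|J|\le Cs$ is \cite[Proposition~4.2]{FTZ}. The reconstruction you sketch, however, would not prove the statement, and it puts the difficulty in the wrong place.

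Your large-surplus step fails. Once $n\ge 2s$, the information $\delta(J)\ge s-1$ forces nothing, and the right-hand side of $3\tri(J)\ge\sum_{uv\in E}(d(u)+d(v)-n)$ can be negative. Take the $p$-th power of the cycle on $n=4p+5$ vertices. It has independence number $4$, so $\chi\ge n/4>p+1$. After deleting vertex $0$, the coloring $i\mapsto i\bmod(p+1)$ of $1,\dots,n-1$ is proper. Hence the graph is $(p+2)$-critical with $n\le 4(p+2)$. It is $2p$-regular, so every summand equals $-5$. Goodman's bound is vacuous as well, since the average degree is below $n/2$.

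In this range the triangles must come from $\chi(J)=s$ itself. That is what Proposition~4.2 of \cite{FTZ} does, and what \cref{prop:fixedStability} adapts:
\begin{enumerate}
\item delete maximum independent sets $I_i$ from successive $(s-i)$-critical subgraphs;
\item measure a per-vertex excess, namely the Tur\'an deficit of $e(G_i'[N(v)])$;
\item if the total excess is small, color a large remaining piece with too few colors via \cref{lem:degenerate}.
\end{enumerate}
Your degree excess $\sum_v(d(v)-(s-1))$ is not this quantity. The modified-weight deletion (\cref{lem:modifiedWeight}) comes from Theorem~1.5 of \cite{FTZ}, and the paper uses it only to remove the dependence on the order constant.

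The small-surplus part has two further errors.

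\emph{The ``key observation'' is false.} $C_{2m+1}\vee K_{s-3}$ is $s$-critical on $s+2m-2$ vertices with $\alpha=m$, and $m$ can be of order $\varepsilon s$ even when $k\le\varepsilon s$. What the FTZ argument actually uses is weaker: the sizes $k_i=|I_i|$ along the deletion sequence sum to at most $Cs$, so fewer than $Cs/K$ of them exceed $K$.

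\emph{The join induction does not close.} Your final step is the Vandermonde identity $\binom{s_1}3+\binom{s_2}3+\binom{s_1}2s_2+s_1\binom{s_2}2=\binom s3$ applied termwise, not convexity. It therefore needs $\tri(J_i)\ge\binom{s_i}3$ for each factor. But the factors are arbitrary $s_i$-critical graphs: possibly of bounded chromatic number, possibly with more than $2s_i-2$ vertices, and then the bound can fail. For $s\ge9$, $M\vee K_{s-4}$ with $M$ the Gr\"otzsch graph ($4$-critical, triangle-free, $11$ vertices) has $s+7\le 2s-2$ vertices, yet $\tri(M)=0$. A valid argument must let the surplus $|J_i|-s_i$ in the cross terms $e(J_1)|J_2|+|J_1|e(J_2)$ pay for triangle deficits in the factors. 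Factors with connected complement have at least $2s_i-1$ vertices, so Gallai's theorem gives no triangle bound for them.

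A smaller point: criticality here is vertex-criticality, so $J-uv$ need not be $(s-1)$-colorable. Your codegree bound for small $k$ follows directly from $\delta(J)\ge s-1$ instead.
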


\begin{proof}
If $|J|\le2s-2$, this is Corollary~3.2 of \cite{FTZ}.  If $2s-1\le|J|\le Cs$, this is Proposition~4.2 of \cite{FTZ}.
\end{proof}

The next lemma is the locally sparse degeneracy estimate proved by Fox, Tidor, and Zhang, following Molloy and Reed.

\begin{lemma}[Fox--Tidor--Zhang]\label{lem:degenerate}
For every $0<\varepsilon,\delta<1$ there is $\zeta>0$ such that the following holds for all sufficiently large $d$.  Let $F$ be a $d$-degenerate graph with a degeneracy order $v_1,\dots,v_n$, and write
\[
 N^+(v_i)=N(v_i)\cap\{v_1,\dots,v_{i-1}\}.
\]
Suppose that
\begin{enumerate}[label=\textup{(\roman*)}]
\item $|N^+(v_i)|\le d$ for every $i$, and $|N^+(v_i)|\le(1-\varepsilon)d$ for $i\le n-\delta d/4$;
\item $e(F[N^+(v)])\le(1-\delta)\binom d2$ for every $v$.
\end{enumerate}
Then
\[
 \chi(F)\le(1-\zeta)d.
\]
\end{lemma}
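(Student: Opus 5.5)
The plan is a randomized greedy coloring along the degeneracy order, with palette $[k]$ where $k=\lfloor(1-\zeta)d\rfloor$. Write $S=\{v_i: i>n-\delta d/4\}$ for the last (at most $\delta d/4$) vertices, and $E=V(F)\setminus S$ for the early ones. Color $v_1,v_2,\dots$ in order. When $v_i\in E$ is reached, at most $(1-\varepsilon)d$ colors are used on $N^+(v_i)$. So if $\zeta<\varepsilon/2$, at least $\varepsilon d/2$ colors remain available, and we give $v_i$ a uniformly random available color. The early vertices are therefore always colored. The whole difficulty is the vertices of $S$, whose back-degree may reach $d>k$.

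For $v\in S$, write $N^+_E(v)=N^+(v)\cap E$. This set has size at least $|N^+(v)|-\delta d/4$, since $|S|\le\delta d/4$. Hypothesis (ii) says $F[N^+(v)]$ misses at least $\delta\binom d2$ of the pairs. Removing the at most $\delta d^2/4$ pairs that meet $S$ leaves, when $|N^+(v)|$ is close to $d$, about $\delta d^2/4$ non-adjacent pairs inside $N^+_E(v)$. (If $|N^+(v)|\le(1-\zeta')d$ for a suitable $\zeta'$, there is nothing to prove for this $v$.) The key estimate is then the following.
\begin{quote}
Key estimate: with probability $1-e^{-\Omega(d)}$, the colors on $N^+_E(v)$ have at least $c\delta d$ repetitions, where repetitions are counted as $|N^+_E(v)|$ minus the number of distinct colors used.
\end{quote}
Given this, $N^+(v)$ sees at most $d-c\delta d$ colors. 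A union bound over the at most $\delta d/4$ vertices of $S$ shows the key estimate holds for all of them simultaneously. We then color $S$ greedily in order. Each $v\in S$ has at most $d-c\delta d$ forbidden colors, so it is colorable as long as $\zeta<c\delta/2$. Finally take $\zeta=\min\{\varepsilon,c\delta\}/2$.

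For the expectation in the key estimate, take a non-adjacent pair $x,y\in N^+_E(v)$ with $y$ later than $x$. When $y$ is colored, $x$'s color lies in $y$'s available set unless $x$'s color appears on $N^+(y)$, and the available set has size $O(d)$. So, conditioned on the history, the probability that $y$ copies $x$'s color is $\Omega(1/d)$, unless $x$'s color is blocked in $y$'s back-neighborhood. The blocked case needs care. I would handle it by showing that for most non-adjacent pairs, $x$'s color is still available to $y$ with constant probability. This uses the fact that each early vertex picks from at least $\varepsilon d/2$ options, so no single color is forced. Summing over the $\Omega(\delta d^2)$ pairs, and correcting for triple coincidences (which number $O(d^3/d^2)=O(d)$ with a small constant, by a second-moment count), gives $\Omega(\delta d)$ expected repetitions.

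The main obstacle is concentration. The choices are sequential and dependent, and early vertices may have unbounded forward degree. So neither a bounded-dependence local lemma nor a naive Lipschitz bound applies directly. My first attempt would be to expose colors one vertex at a time and apply Freedman's (or Azuma's) martingale inequality to the count of repetitions. Changing one vertex's choice affects later vertices only through their available sets, so I would instead define the repetition count via a coupling: each vertex draws an independent uniform random real, and the rule maps it to the available color of that rank. Only vertices in $N^+_E(v)$, at most $d$ of them, enter the count directly. This should make each step's influence $O(1)$ with conditional variance $O(1/d)$ per pair-step, giving a deviation bound $e^{-\Omega(d)}$. If that fails, the fallback is the Molloy--Reed approach: a one-round wasteful coloring of $E$ with retention and Talagrand's inequality. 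There one must control the forward degrees of $E$-vertices by recoloring them greedily afterwards, using the slack $\varepsilon d/2$.
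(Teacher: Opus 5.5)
The paper gives no proof of this lemma; it imports it as Lemma~3.5 of \cite{FTZ}, so your argument has to stand on its own. Your reduction is sound. Only the at most $\delta d/4$ vertices of $S$ are dangerous. Deleting the pairs that meet $S$ still leaves $\Omega(\delta d^2)$ non-edges in $N^+_E(v)$ once $|N^+(v)|\ge(1-\zeta')d$ with $\zeta'\ll\delta$; this is exactly why the hypothesis uses $\delta d/4$. Then $\Omega(\delta d)$ repetitions on each such $N^+_E(v)$, a union bound, and a greedy pass over $S$ finish the proof.

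\textbf{The gap is the concentration half of your key estimate.}
\begin{itemize}
\item In sequential random greedy coloring, the colors on $N^+_E(v)$ depend on the choices of all earlier vertices of $E$. Altering one choice alters the available lists of its forward neighbors. Under your rank coupling this changes their colors with probability that is in general bounded away from zero, because removing one color and inserting another shifts every rank in between.
\item The effect propagates along forward paths of unbounded length, since forward degrees and $|E|$ are unbounded. So you have neither an $O(1)$ Lipschitz bound nor a bounded number of exposure steps, and Azuma or Freedman over the exposure of $E$ gives nothing.
\item Exposing only the at most $d$ vertices of $N^+_E(v)$ does give a martingale. But it concentrates the repetition count only around the random sum of the conditional probabilities that each $y$ repeats an earlier color, and concentrating that sum is the original problem.
\item Since $|S|$ may be of order $\delta d$, the union bound needs failure probability below about $4/(\delta d)$ per vertex, and nothing in the sketch supplies it.
\end{itemize}

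Two smaller points.
\begin{itemize}
\item Your triple correction is not small. A collision has conditional probability up to about $2/(\varepsilon d)$, so monochromatic triples can number $\Theta(d/\varepsilon^2)$, against $\Theta(\delta d)$ monochromatic pairs. Count instead the vertices $y$ whose color already occurs earlier in $N^+_E(v)$; that is exactly the number of repetitions.
\item The availability step is true but needs an argument. Take the at most $2d$ vertices consisting of $N^+(x)$ and the back-neighbors of $y$ preceding $x$. Each picks any fixed color with probability at most $2/(\varepsilon d)$, so in expectation at least $k e^{-O(1/\varepsilon)}$ colors are missing from this set. The later back-neighbors of $y$ then copy $x$'s color with total probability bounded away from $1$.
\end{itemize}

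\textbf{Your fallback also fails as described.} After one wasteful round with the full palette, an unretained vertex of $E$ can have unboundedly many retained \emph{forward} neighbors. These may block every color in the greedy completion, and the slack $\varepsilon d/2$ cannot absorb them.

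\textbf{The missing idea is to spend the slack on a separate palette.}
\begin{itemize}
\item Reserve $P_1\subseteq[k]$ with $|P_1|=\lfloor\eta d\rfloor$, where $\eta=\varepsilon/4$ and $\zeta\le\varepsilon/4$.
\item Let each vertex of $E$ independently activate with probability $\eta$ and choose a uniform color of $P_1$. It keeps that color iff no back-neighbor chose the same color; the kept colors form a proper partial coloring.
\item Color every other vertex of $E$ in order from $P_2=[k]\setminus P_1$. This succeeds because $|P_2|>(1-\varepsilon)d\ge|N^+(u)|$, and retained forward neighbors use only $P_1$.
\item Count the colors of $P_1$ chosen by exactly two vertices of $N^+_E(v)$ and kept by both; such vertices are necessarily non-adjacent. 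This count has mean $\Omega(\eta\delta d)$.
\item The count is determined by the independent trials on $N^+_E(v)\cup\bigcup_{u\in N^+_E(v)}N^+(u)$, at most $d+d^2$ vertices, and is $2$-Lipschitz and certifiable. So the Talagrand argument of Molloy and Reed gives tails $e^{-\Omega(d)}$.
\end{itemize}
With this replacement, your union bound and final greedy pass over $S$ go through with $\zeta$ of order $\varepsilon\delta$.
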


This is Lemma~3.5 of \cite{FTZ}.

We also need two convenient zero-parameter forms of triangle-sensitive
coloring estimates of Harris.  A graph has local triangle bound $y$ if
every vertex lies in at most $y$ triangles.  Harris works with the same
truncated-logarithm convention in the nonzero range.  The statements below
separate the harmless zero cases and absorb bounded parameter ranges into
one absolute constant $K_H$.

\begin{corollary}[Convenient form of Harris's vertex estimate]
\label{thm:HarrisVertex}
Every graph with $n$ vertices, $t$ triangles, and local triangle bound $y$
satisfies
\[
 \chi(G)\le K_H\left(
 \sqrt{\frac n{\Log n}}
 +t^{1/3}\frac{\LogLog(t^2/y_+^3)}{\Log^{2/3}(t^2/y_+^3)}
 \right).
\]
\end{corollary}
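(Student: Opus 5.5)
This corollary follows directly from Harris's triangle-sensitive bound; the only work is to handle the degenerate parameter ranges and to absorb constants. I will quote Harris's vertex estimate in its original form. For a graph on $n$ vertices with $t$ triangles and local triangle bound $y$, in the nontrivial range (say $n$, $t$, $y\ge1$ and $t^2/y^3$ larger than a fixed absolute constant) it gives
\[
 \chi(G)\le O\!\left(\sqrt{n/\log n}+t^{1/3}\frac{\log\log(t^2/y^3)}{\log^{2/3}(t^2/y^3)}\right),
\]
with the same truncated-logarithm convention. On that range the statement is immediate once $K_H$ exceeds Harris's implicit constant, because $\Log$ and $\LogLog$ agree with $\log$ and $\log\log$ up to absolute factors there. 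The remaining task is to check the boundary cases one at a time and take $K_H$ to be the maximum of finitely many absolute constants.

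\textbf{Case $t=0$.} The graph is triangle-free, and the second term equals $0\cdot\LogLog(0)/\Log^{2/3}(0)=0$. So we need $\chi(G)\le K_H\sqrt{n/\Log n}$ for every triangle-free graph. For large $n$ this is the classical triangle-free bound $\chi=O(\sqrt{n/\log n})$, which is also the $t=0$ case of Harris's theorem (or follows from Kim/Shearer-type independence bounds by greedy extraction). For bounded $n$ we use $\chi\le n\le K_H\sqrt{n/\Log n}$ once $K_H$ is large. Note also that $n=0$ gives $\chi=0$.

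\textbf{Case $y<1$ with $t>0$.} This case cannot occur, since then every vertex lies in no triangle. So $t>0$ forces $y\ge1$ and hence $y_+=y$. The only other way $y_+$ can differ from $y$ is when $t=0$, which is already covered.

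\textbf{Case $t^2/y^3$ small.} Suppose $t^2/y_+^3\le x_0$ for an absolute constant $x_0$. Then $\Log$ and $\LogLog$ of this quantity are bounded between absolute constants, so the right-hand side is at least $c\,t^{1/3}$. It therefore suffices to show $\chi(G)=O(\sqrt{n/\Log n}+t^{1/3})$, which follows in two steps.
\begin{itemize}
\item First, $y\le t$ always holds, and every vertex lies in at most $y$ triangles.
\item Second, the subgraph $F$ of edges lying in triangles has at most $3t$ edges, so $\chi(F)=O(\sqrt t)$. That alone is too weak, so instead I would use $t^2\le x_0y^3$, i.e.\ $y\ge (t^2/x_0)^{1/3}$, together with $\chi\le O(\sqrt{n/\log n}+t^{1/3})$. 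The latter is Harris's main theorem with a cruder second term, which he states, or it can be derived by deleting a maximal triangle-free part.
\end{itemize}

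The main obstacle is this last case, where the log-correction factor degenerates. The cleanest plan is simply to invoke whichever version of Harris's statement already includes the $t^{1/3}$ bound, valid for all parameters, and to note that the displayed expression dominates a constant multiple of it. For small $n$ or $t$ (bounded by absolute constants), trivial bounds such as $\chi\le n$ or $\chi\le1+\sqrt{6t}$ are absorbed into $K_H$. Finally, choosing $K_H$ as the maximum over these finitely many regimes yields the corollary.
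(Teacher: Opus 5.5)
Your proposal follows the paper's proof: cite Harris's Theorem~2.3 for $t>0$, since it is stated with the truncated-logarithm convention and so covers every $t>0$, including small $t^2/y_+^3$; use the triangle-free vertex bound for $t=0$; and absorb constants into $K_H$. Your separate case for small $t^2/y_+^3$ is therefore unnecessary, and in it the aside ``$y\le t$ always holds'' is false for a general local triangle bound, while the ``maximal triangle-free part'' derivation is unjustified, so that case really rests on Harris's full-range statement, as in the paper.
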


\begin{proof}
For $t>0$, this is Harris's Theorem~2.3 \cite{Harris}, under his
truncated-logarithm convention, after changing the absolute implicit
constant.  If $t=0$, the second term vanishes and the first term is the
triangle-free vertex estimate quoted as Theorem~1.1 in \cite{Harris}.
The empty graph is immediate.
\end{proof}

\begin{corollary}[Convenient form of Harris's edge estimate]
\label{thm:HarrisEdge}
Every graph with $m$ edges, $t$ triangles, and local triangle bound $y$
satisfies
\[
 \chi(G)\le 1+K_H\left(
 \frac{m^{1/3}}{\Log^{2/3}m}
 +t^{1/3}\frac{\LogLog(t^2/y_+^3)}{\Log^{2/3}(t^2/y_+^3)}
 \right).
\]
\end{corollary}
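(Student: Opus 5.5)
The plan mirrors the proof of \cref{thm:HarrisVertex}: reduce to Harris's edge--triangle theorem in the range where it applies, and handle the degenerate parameter values by hand. For $t>0$ and $m$ large, Harris's edge estimate (his companion theorem to Theorem~2.3 in \cite{Harris}) gives
\[
\chi(G)\le O\!\left(\frac{m^{1/3}}{\log^{2/3}m}+t^{1/3}\frac{\log\log(t^2/y^3)}{\log^{2/3}(t^2/y^3)}\right)
\]
under his truncated-logarithm convention. It is stated when $t^2/y^3$ exceeds some threshold, or else with $\Log$ in place of $\log$. I would first check that his truncation agrees with our $\Log$ and $\LogLog$. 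Then I would verify that replacing $y$ by $y_+=\max\{1,y\}$ only affects the case $y<1$. In that case $y=0$ forces $t=0$, so the replacement is harmless, and since $y$ counts triangles we have $y\in\{0\}\cup[1,\infty)$ in any case. The additive $1$ and the enlarged constant $K_H$ absorb all bounded ranges of $m$, $t$, and $t^2/y_+^3$, where Harris's asymptotic statement may not literally apply.

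The zero cases I would treat separately. If $m=0$, the graph is edgeless, so $\chi(G)\le1$; this is exactly why the leading $1$ appears in the bound. If $t=0$, the second term vanishes, since $t^{1/3}=0$ and the $\LogLog/\Log$ factor is at least $0$ and finite. The claim then reduces to the triangle-free edge bound $\chi(G)\le O(m^{1/3}/\Log^{2/3}m)$, which is the edge version of the Kim/Johansson-type estimate quoted in \cite{Harris}. This bound can also be derived directly from the vertex bound in \cref{thm:HarrisVertex}. First delete the isolated vertices. Then repeatedly remove vertices of degree below a threshold $D\approx m^{1/3}/\Log^{2/3}m$, coloring them greedily at the end. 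What remains has at most $2m/D$ vertices, and applying $\sqrt{n/\Log n}$ with $n\le 2m/D$ gives
\[
\sqrt{\frac{m^{2/3}\Log^{2/3}m}{\Log m}}=\frac{m^{1/3}}{\Log^{1/6}m}.
\]
This misses the target by a log factor. So this shortcut fails, and I would cite Harris's (or the Poljak--Tuza/Kim) edge form directly for $t=0$.

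The main obstacle is bookkeeping rather than mathematics. I need to confirm that Harris's edge statement covers every $t>0$ with an absolute constant, including the regime where $t^2/y^3$ is bounded. In that regime the second term is about $t^{1/3}$, and the required bound follows from the elementary fact that $\chi$ is at most $O(m^{1/3}+t^{1/3})$-type bounds in Harris's weak form. If Harris only states the theorem for large parameters, I would fill the bounded-ratio range with his weaker unconditional bound and enlarge $K_H$ accordingly.
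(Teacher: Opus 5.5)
Your proposal is correct and follows the paper's proof. The edgeless case gives $\chi\le1$. The case $m\ge1$, $t>0$ is Harris's Theorem~3.2 under the same truncated-logarithm convention. The case $t=0$ is the triangle-free edge estimate quoted as Theorem~1.1 in \cite{Harris}, with bounded ranges absorbed into $K_H$.

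One small slip: $y$ is only an upper bound, so it need not lie in $\{0\}\cup[1,\infty)$. Any $y<1$ still forces $t=0$, which is all you need. You were also right to discard the degeneracy shortcut; the paper likewise just cites the edge form.
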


\begin{proof}
If $m=0$, then $G$ is edgeless and $\chi(G)\le1$.  For $m\ge1$ and
$t>0$, the assertion follows from Harris's Theorem~3.2 \cite{Harris}; for
$t=0$, use the triangle-free edge estimate quoted as Theorem~1.1 there.
The additive $1$ serves only to include the edgeless case and is absorbed
in every asymptotic application below.
\end{proof}

We now distinguish Vu's original asymptotic theorem from the truncated
version used in the proof.

\begin{theorem}[Vu]\label{thm:VuOriginal}
There is an absolute constant $C_V$ such that the following holds.  Let
$G$ have maximum degree $\Delta$, let $1<f<\Delta^2$, and suppose
\[
 e(G[N(v)])\le\frac{\Delta^2}{f}
 \qquad(v\in V(G)).
\]
Then
\[
 \chi_\ell(G)\le C_V\frac{\Delta}{\log f}.
\]
\end{theorem}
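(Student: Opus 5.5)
This is Vu's theorem, stated as a black-box citation; within the paper it is simply quoted from Vu's work on list colorings of graphs with sparse neighbourhoods. If I were to justify it, I would take the standard semi-random (Rödl nibble) route of Kim, Johansson, Molloy--Reed and Vu. Each vertex $v$ carries a list of $k=C_V\Delta/\log f$ colors. The coloring proceeds in rounds: in each round every uncolored vertex activates with a small probability and picks a uniformly random color from its current list. Conflicting choices are uncolored, and each color retained by a neighbour is deleted from the list of $v$. I would track two quantities: the list sizes $\ell_i(v)$, and the ``color degrees'' $d_i(v,c)$, the number of uncolored neighbours of $v$ that still have $c$ in their lists.

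The key computation uses the neighbourhood sparsity $e(G[N(v)])\le\Delta^2/f$, which makes the neighbours of $v$ nearly independent. Heuristically this is what lets the process behave as in the triangle-free case, but with entropy gain $\log f$ in place of $\log\Delta$. The aim is to show that the ratio $d_i/\ell_i$ decays geometrically until the degree drops to a small fraction of the list size, and then to finish greedily. In each round I would establish expectation estimates for $\ell_{i+1}(v)$ and $d_{i+1}(v,c)$, deriving them from the fact that colors retained on the few neighbour edges inside $N(v)$ create only a lower-order correlation. I would then prove concentration via Talagrand's inequality or Kim--Vu polynomial concentration, and glue the rounds together with the Lovász Local Lemma over events of dependency range $\Delta^{O(1)}$.

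The main obstacle is the regime where $f$ is small, close to $1$, or close to $\Delta^2$. When $f\le\Delta^{\epsilon}$ the nibble gains little, and the bound must instead come directly from $\chi_\ell\le\Delta+1$ together with $\log f$ being bounded; this is absorbed by taking $C_V$ large. The case $f$ near $\Delta^2$ is essentially the triangle-free bound of Johansson. For the middle range, the delicate step is controlling the variance of $d_i(v,c)$. I would first try to bound it by the number of edges inside $N(v)$, giving error terms of order $\Delta^2/f$ relative to the main term $\Delta^2$ per round. This is exactly what caps the number of useful rounds at roughly $\log f$.

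Since the paper only uses the statement as an external input, I would cite Vu's theorem rather than reprove it, noting only that the truncated version used later follows by choosing $f$ in the admissible range and enlarging $C_V$.
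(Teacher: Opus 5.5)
Your proposal matches the paper: it gives no proof of this statement, quotes it from Vu's paper, and derives the truncated \cref{thm:Vu} from it, as you suggest. One caution about your optional sketch: the regime $f\le\Delta^{\epsilon}$ cannot be absorbed by the greedy bound $\chi_\ell\le\Delta+1$. There $\log f$ can be as large as $\epsilon\log\Delta$, so the target $C_V\Delta/\log f$ is $o(\Delta)$. Only bounded $f$ is handled greedily, and the range where $f$ grows but stays small needs a genuine extra argument, for instance a random splitting of vertices and colours that lowers the degree while essentially preserving $f$.
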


This is Vu's locally sparse list-coloring theorem \cite{Vu}.

\begin{corollary}[Truncated locally sparse list bound]\label{thm:Vu}
There is an absolute constant $K_V$ such that the following holds.  Let
$d\ge1$, and let $G$ have maximum degree at most $d$ and local triangle
bound $y$.  Then $G$ is colorable from every list assignment of size at
least
\[
 K_V\frac d{\Log(d^2/y_+)}.
\]
\end{corollary}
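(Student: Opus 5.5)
We derive the corollary from \cref{thm:VuOriginal} by a case analysis on $f=d^2/y_+$, after reducing to the situation where the maximum degree is comparable to $d$. First, translate the local hypothesis. For a vertex $v$, the number of edges of $G[N(v)]$ equals the number of triangles through $v$, so $e(G[N(v)])\le y\le y_+$. If $\Delta=\Delta(G)$ denotes the actual maximum degree, we have $\Delta\le d$. Since the list sizes are monotone, it suffices to show that lists of size $K_V d/\Log(d^2/y_+)$ suffice. Note that $\Log(d^2/y_+)\ge1$ always, and that $d/\Log(d^2/y_+)$ is increasing in $d$ up to constants. Nevertheless we apply Vu with the parameter $d$ rather than $\Delta$, so the first step is to pass to a graph of maximum degree exactly $\lceil d\rceil$, or simply to use Vu's theorem with $\Delta$ replaced by an upper bound. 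Vu's theorem is stated for the maximum degree, so we embed $G$ into a larger graph: take disjoint copies of $G$ and add pendant-free padding, namely disjoint stars attached to low-degree vertices. This raises degrees to $D:=\lceil d\rceil$ without creating triangles. Each new vertex lies in no triangle, and old vertices keep their triangle counts. A list coloring of the padded graph restricts to $G$. Alternatively, we observe that the proof of Vu's theorem only uses $\Delta$ as an upper bound; we take the padding route to stay within the cited statement.

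Now set $f=D^2/y_+$. There are three cases.
\begin{itemize}
\item \textbf{Case $f\le e^2$ (or $f\le1$).} Here $\Log(d^2/y_+)$ is bounded by an absolute constant. Greedy list coloring succeeds with lists of size $\Delta+1\le d+1\le 2d$. This is at most $K_V d/\Log(d^2/y_+)$ once $K_V\ge 2\cdot 2$; for this we need $d\ge1$.
\item \textbf{Case $e^2<f<D^2$.} Vu's hypothesis $e(G[N(v)])\le y_+=D^2/f$ holds. \cref{thm:VuOriginal} then gives $\chi_\ell\le C_V D/\log f$. Since $D\le 2d$ and $\log f\ge \tfrac12\Log(d^2/y_+)-O(1)$, with $\log f\ge2$ absorbing the additive constants, this yields the bound with $K_V=O(C_V)$.
\item \textbf{Case $f\ge D^2$.} This happens only if $y_+\le1$, so $y<1$ and $G$ is triangle-free, and then $\Log(d^2/y_+)=\Log(d^2)$. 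Apply Vu with $f=D^2/2$ instead, which is admissible since $1<f<D^2$ when $D\ge2$, and $e(G[N(v)])=0\le 2$. This gives $C_V D/\log(D^2/2)=O(d/\Log d^2)$. The case $D=1$ is trivial, since $G$ is then a matching and lists of size $2$ suffice.
\end{itemize}

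The main obstacle is only bookkeeping: comparing $\log f$ with the truncated $\Log(d^2/y_+)$ uniformly, and handling the rounding $d\mapsto\lceil d\rceil$ and the boundary conditions $1<f<\Delta^2$ of Vu's statement. The remedy is to choose the threshold $e^2$ so that $\log f$ and $\Log(d^2/y_+)$ agree up to a factor of $2$ in the middle case. Choosing $K_V$ as a large multiple of $\max\{C_V,1\}$ then covers all three cases.
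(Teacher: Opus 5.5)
Your proof is correct, but it bridges the gap between the true maximum degree $\Delta$ and the parameter $d$ differently from the paper.

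\textbf{The paper's route.} It applies \cref{thm:VuOriginal} to $G$ itself with $f=\Delta^2/(2y_+)$. The factor $2$ gives $f<\Delta^2$ automatically, so no separate $y\le1$ case is needed. It then transfers $\Delta/\Log(\Delta^2/y_+)$ to $d/\Log(d^2/y_+)$ using the monotonicity of $h(u)=u/\Log(u^2)$ after normalizing by $\sqrt{y_+}$. The bounded-ratio case is handled greedily, using a positive lower bound on $h$.

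\textbf{Your route.} You add a disjoint star (or pendant leaves), which creates no triangles and leaves every $e(G[N(v)])$ unchanged, so the maximum degree becomes exactly $D=\lceil d\rceil$. After that, only the rounding comparison $d\le D\le 2d$ is needed. In your middle case you even have $\log f\ge\max\{2,\log(d^2/y_+)\}\ge\Log(d^2/y_+)$ directly, so the additive $O(1)$ bookkeeping you describe is unnecessary. Padding gives a more elementary argument with no analysis of $h$. The cost is modifying the graph and treating the boundary case $f\ge D^2$ separately, which the paper's factor $2$ avoids.

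\textbf{A slip in your third case.} From $y_+\le1$ you only get $y\le1$, not $y<1$. So $G$ need not be triangle-free; a disjoint union of triangles is an example. This does not break anything: with $f=D^2/2$ you still have $e(G[N(v)])\le1\le2=D^2/f$, so your application of Vu goes through unchanged.
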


\begin{proof}
Let $\Delta=\Delta(G)$ and put $a=\sqrt{y_+}$.  If $\Delta=0$, one color
suffices.  For $x>0$, write
\[
 \frac{x}{\Log(x^2/y_+)}
 =a\,h(x/a),
 \qquad
 h(u)=\frac{u}{\Log(u^2)}.
\]
The function $h$ is nondecreasing for $u$ outside one fixed compact
interval and has a positive absolute lower bound on $u\ge1$.

If $\Delta^2/y_+$ is larger than a sufficiently large absolute constant,
apply \cref{thm:VuOriginal} with
$f=\Delta^2/(2y_+)$.  Then
$e(G[N(v)])\le y\le\Delta^2/f$, and
$\log f\ge c\Log(\Delta^2/y_+)$.  The normalized arguments are $\Delta/a$ and $d/a$.  The present case
places $\Delta/a$ beyond a fixed absolute threshold, and
$d/a\ge\Delta/a$, so both lie in the monotone range of $h$.  Hence
\[
 \frac{\Delta}{\Log(\Delta^2/y_+)}
 \le C\frac d{\Log(d^2/y_+)}.
\]
If $\Delta^2/y_+$ is bounded, then either $d<a$, in which case the
truncated denominator equals $1$, or $d\ge a$, in which case the positive
lower bound for $h$ gives
\[
 \frac d{\Log(d^2/y_+)}\ge ca\ge c'(\Delta+1).
\]
Thus greedy list coloring applies after increasing the absolute constant.
The two cases prove the corollary.
\end{proof}

The next lemma combines a bounded-independence core obtained from Harris's estimate with the Ramsey--Tur\'an calculation in the proof of Proposition~5.3 of \cite{FTZ}.  The preliminary core reduction allows the order bound $e(F)\le Ms^2$ for arbitrary fixed $M$, while the Ramsey--Tur\'an calculation yields an absolute cubic surplus.

\begin{lemma}[Small-clique surplus]\label{lem:smallCliqueSurplus}
There are absolute constants $\varepsilon_0,\sigma_0>0$ such that the following holds.  For every fixed $M>0$ and all sufficiently large $s$ in terms of $M$, every graph $F$ with
\[
 \chi(F)\ge s,
 \qquad
 e(F)\le Ms^2,
 \qquad
 \omega(F)\le\varepsilon_0s
\]
satisfies
\[
 \tri(F)\ge\binom s3+\sigma_0s^3.
\]
\end{lemma}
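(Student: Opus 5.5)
We first reduce to a bounded-independence core using \cref{thm:HarrisEdge}, and then run a Ramsey--Tur\'an count inside that core. We may assume $\tri(F)<\binom s3+\sigma_0s^3\le s^3$, since otherwise there is nothing to prove.

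\emph{Step 1 (core).} We greedily remove maximal independent sets of size at least $a$, where $a=a(M)$ is a large constant. Each removal lowers the chromatic number by at most one. If more than $\eta s$ such sets are removed, for a small absolute $\eta$, we use Harris's estimate on the removed part to get a contradiction. That part has at most $Ms^2$ edges and at most $s^3$ triangles, so \cref{thm:HarrisEdge} gives $\chi\le K_H(M^{1/3}s^{2/3}+s\cdot o(1))$, because the LogLog/Log$^{2/3}$ factor tends to zero once $t^2/y_+^3$ is large. The case where $y$ is comparable to $t^{2/3}$ has to be handled separately. I would pass first to an $s$-critical subgraph, which has minimum degree at least $s-1$. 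Then only $O(Ms)$ vertices can carry many triangles, since otherwise the triangle count exceeds $s^3$. Those vertices are split off and colored using \cref{thm:HarrisVertex}.

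The cleaner route is to apply Harris to the whole graph with the order bound. A critical $F$ with $e\le Ms^2$ and $\delta\ge s-1$ has at most $2Ms$ vertices. Hence $\sqrt{n/\Log n}=o(s)$, and the triangle term is at most $(1+\sigma_0)^{1/3}s\cdot o(1)$ unless the local triangle bound satisfies $y\ge t^{2/3}\cdot\Omega(1)$. This yields an induced subgraph $H$ with $|H|\le 2Ms$, $\chi(H)\ge(1-\eta)s$, and $\alpha(H)\le a$.

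\emph{Step 2 (Ramsey--Tur\'an count).} Now take $H$ with $n\le 2Ms$ vertices, $\chi(H)\ge(1-\eta)s$, $\alpha(H)<a$ and $\omega(H)\le\varepsilon_0 s$. Since $\chi(H)\ge(1-\eta)s$ and $\alpha(H)<a$, we get $n\ge(1-\eta)s\cdot$ something. More usefully, $\chi\le n/2+O(\cdot)$ fails only when $n$ is large, and the Ramsey--Tur\'an count is exactly what exploits this. Following Proposition~5.3 of \cite{FTZ}, the complement of $H$ is $K_a$-free and its own clique number is small. Thus $H$ is dense: every vertex set of size $m\gg \varepsilon_0 s$ spans edges everywhere, and Goodman-type counting gives $\tri(H)\ge \binom n3-O(\cdot)$.

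More precisely, a bounded-independence graph on $n$ vertices with small clique number has triangle density bounded below. Combined with $n\ge \chi(H)\ge(1-\eta)s$, and indeed $n\ge 2\chi/(1+o(1))$, since with $\alpha$ bounded and $\omega$ small a matching-type coloring is available (pair non-adjacent vertices, as in Gallai/FTZ), this gives $n\ge(2-o(1))(1-\eta)s$. Then $\tri\ge c\binom{n}{3}\ge$ roughly $c\cdot\frac{8}{6}s^3$. Choosing $\varepsilon_0$ small makes Ramsey--Tur\'an give $c$ close to $1$ (Goodman: density of triangles at least that of the Tur\'an-type bound), so the total exceeds $\frac{s^3}{6}+\sigma_0 s^3$ with $\sigma_0$ absolute.

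\emph{Main obstacle.} The hardest step is Step 1 in the regime where the local triangle bound $y$ is large, because there Harris's log factor gives no saving. My plan is to remove the $O(s^{3}/y)$ heavy vertices and bound their number against the chromatic loss. The key point to check is that the final constants $\varepsilon_0,\sigma_0$ depend on neither $M$ nor $a$.
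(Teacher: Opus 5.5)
\textbf{There are genuine gaps in both steps. Step~2 is the more serious one.}

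\emph{Step~2.} The claims that Goodman-type counting gives $\tri(H)\ge\binom n3-O(\cdot)$, and that a small $\varepsilon_0$ makes the triangle density $c$ close to $1$, are false. The complement of $H$ is $K_a$-free with independence number at most $\varepsilon_0 s$. Erd\H{o}s--S\'os then forces only edge density about $2/a$ in $H$, which is twice the Tur\'an density. Disjoint unions of about $a/2$ complements of triangle-free graphs with small independence number satisfy your hypotheses and have triangle density of order $1/a^2$.

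Your pairing bound $n\ge2\chi(H)-\omega(H)$ is correct. But with $n\approx2s$ and density $O(1/a^2)$, any global count falls far below $\binom s3$.

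The missing idea is to use the chromatic structure stage by stage, as the paper does:
\begin{enumerate}[label=\textup{(\roman*)}]
\item Start from the core $F_0$ with $s'=\chi(F_0)\ge 9s/10$. Repeatedly pass to an induced $(s'-i)$-critical $G_i'$ and delete a maximum independent set $I_i$ of size $k_i\le K_0$.
\item For each $v\in I_i$, apply Erd\H{o}s--S\'os inside $G_i'[N(v)]$. This graph has at least $s'-i-1$ vertices, independence number at most $k_i$, and clique number at most $\varepsilon_0s$. For $i<4s/5$ the clique hypothesis also rules out $k_i=1$.
\item Round $k_i$ up to an even $k_i'\le4k_i/3$. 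This gives $e(G_i'[N(v)])\ge\frac{7}{10k_i}(s'-i-1)^2-\frac12d(v)$, compared with Tur\'an's $\frac1{2k_i}$.
\item Summing over the $k_i$ vertices of $I_i$ cancels $k_i$. Summing over stages gives $\tri(F)+\frac12e(F)\ge(\frac{5101}{30000}-o(1))s^3$.
\end{enumerate}
This beats $\frac16s^3$ despite the $10\%$ chromatic loss, and the term $\frac12e(F)\le\frac M2s^2$ is absorbed.

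\emph{Step~1.} The argument above needs the Erd\H{o}s--S\'os thresholds $\eta_k$, for the finitely many even $k$ up to about $K_0$, to be absolute. So the independence bound of the core must not depend on $M$.

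Yours does depend on $M$. With $|F|\le2Ms$, deleting independent sets of size at least $a$ costs at most $2Ms/a$ colors. This is pure counting and needs no Harris estimate. It forces $a=a(M)$, so your $\varepsilon_0$ would depend on $M$.

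The paper obtains an absolute $K_0$ by keeping the heavy vertices as the core rather than coloring them:
\begin{enumerate}[label=\textup{(\roman*)}]
\item Set $y=s^2/L$ for a large absolute $L$. The set $B$ of vertices in more than $y$ triangles has $|B|<Ls$, because $\tri(F)\le2\binom s3$, and this bound does not involve $M$.
\item The light part is colored with $s/20$ colors by \cref{thm:HarrisVertex}. Its vertex term is $o_M(s)$ by the order bound. Its triangle term is $s\,x^{1/3}\psi(x^2L^3)$ with $x\le1/3$, which is uniformly small by \cref{lem:uniformPsi}.
\item Deleting independent sets of size above $K_0>20L$ from $F[B]$ then costs fewer than $s/20$ colors.
\end{enumerate}
Your plan to split off the heavy vertices and color them with Harris runs the wrong way. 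Those are exactly the vertices on which Harris's bound gives no saving.
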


\begin{proof}
We first construct the bounded-independence core and then apply the Ramsey--Tur\'an part of the proof of Proposition~5.3 in \cite{FTZ}.  The constant $\sigma_0$ will be absolute, while the lower threshold for $s$ may depend on $M$.

Assume first that
\begin{equation}\label{eq:smallCliqueNear}
 \tri(F)\le2\binom s3.
\end{equation}
Pass to an induced $s$-critical subgraph, still denoted by $F$.  Since $\delta(F)\ge s-1$,
\begin{equation}\label{eq:smallCliqueOrder}
 |F|\le\frac{2e(F)}{s-1}\le3Ms
\end{equation}
for all large $s$.

We claim that $F$ contains a subgraph $F_0$ with
\begin{equation}\label{eq:smallCliqueCore}
 \chi(F_0)\ge\frac{9s}{10}
 \qquad\text{and}\qquad
 \alpha(F_0)\le K_0,
\end{equation}
where $K_0$ is an absolute constant.  To see this, choose a sufficiently large absolute constant $L$, put $y=s^2/L$, and partition $V(F)=A\sqcup B$ according as a vertex lies in at most $y$ triangles or more than $y$ triangles.  By \eqref{eq:smallCliqueNear},
\[
 |B|y<3\tri(F)\le6\binom s3<s^3,
 \qquad\text{so}\qquad
 |B|<Ls.
\]
Apply \cref{thm:HarrisVertex}, that is, Harris's Theorem~2.3, to $F[A]$.  The vertex term is
\[
 O_M\!\left(\sqrt{\frac{s}{\log s}}\right)=o_M(s)
\]
by \eqref{eq:smallCliqueOrder}.  If $\tri(F[A])=xs^3$, then $0\le x\le1/3$, and the local-triangle term divided by $s$ is
\[
 x^{1/3}\frac{\LogLog(x^2L^3)}{\Log^{2/3}(x^2L^3)}.
\]
The uniform estimate in \cref{lem:uniformPsi} with $X=1/3$ shows that this is arbitrarily small when $L$ is sufficiently large.  Thus $\chi(F[A])\le s/20$ for all sufficiently large $s$ in terms of $M$.  Hence $\chi(F[B])\ge19s/20$.

Repeatedly delete from $F[B]$ an independent set of size greater than a sufficiently large absolute integer $K_0>20L$.  Since $|B|<Ls$, fewer than $s/20$ such sets are deleted.  The remaining induced subgraph satisfies \eqref{eq:smallCliqueCore}.

We now choose the Ramsey--Tur\'an parameter explicitly.  Let
\[
 \mathcal K=\{2,4,\dots,2\lceil K_0/2\rceil\},
 \qquad
 \delta_{\mathrm{RT}}=\frac1{20K_0}.
\]
For each $k\in\mathcal K$, apply the Erd\H{o}s--S\'os theorem
\cite{ErdosSos} with parameters $k$ and $\delta_{\mathrm{RT}}$; denote
its clique-density constant by $\eta_k>0$.  Put
\[
 \eta_* = \min_{k\in\mathcal K}\eta_k,
 \qquad
 \varepsilon_0=\min\left\{\frac1{100},\frac{\eta_*}{20}\right\}.
\]
Only the finite set $\mathcal K$, and hence only the absolute number
$K_0$, enters this choice.

Put $s'=\chi(F_0)\ge9s/10$.  Starting with $G_0=F_0$, choose an induced
$(s'-i)$-critical subgraph $G_i'\subseteq G_i$, let $I_i$ be a maximum
independent set in $G_i'$, write
\[
 k_i=|I_i|=\alpha(G_i')\le K_0,
\]
and set $G_{i+1}=G_i'-I_i$.  Then $\chi(G_{i+1})=s'-i-1$ and
$\delta(G_i')\ge s'-i-1$.

Fix $i<4s/5$ and $v\in V(G_i')$.  For all sufficiently large $s$,
\[
 d_{G_i'}(v)\ge s'-i-1\ge s/11.
\]
Moreover $k_i\ne1$, since otherwise $G_i'$ would be a clique of order
$s'-i\ge s/11>\varepsilon_0s$, contrary to the clique hypothesis.
Let
\[
 k_i'=2\left\lceil\frac{k_i}{2}\right\rceil\in\mathcal K.
\]
Since $k_i\ge2$, we have $k_i'\le4k_i/3$.  The neighborhood graph
$H=G_i'[N(v)]$ satisfies
\[
 \alpha(H)\le k_i\le k_i'
 \quad\text{and}\quad
 \omega(H)\le\omega(F)\le\varepsilon_0s
 \le 11\varepsilon_0|H|\le\eta_{k_i'}|H|.
\]
The Erd\H{o}s--S\'os estimate therefore gives
\begin{align}
 e(G_i'[N(v)])
 &\ge\left(\frac1{k_i'}-\frac1{20K_0}\right)d_{G_i'}(v)^2
      -\frac12d_{G_i'}(v)\notag\\
 &\ge \frac7{10k_i}(s'-i-1)^2
      -\frac12d_{G_i'}(v).\label{eq:smallCliqueRTstep}
\end{align}
Here the coefficient $7/10$ follows from
\[
 \frac1{k_i'}\ge\frac3{4k_i},
 \qquad
 \frac1{20K_0}\le\frac1{20k_i}.
\]
For $i\ge4s/5$, ordinary Tur\'an gives the weaker estimate
\begin{equation}
 e(G_i'[N(v)])
 \ge \frac1{2k_i}(s'-i-1)^2
      -\frac12d_{G_i'}(v).\label{eq:smallCliqueTuranStep}
\end{equation}

Since $I_i$ is independent, each triangle counted at stage $i$ is
counted exactly once at that stage.  After $I_i$ is deleted, it cannot be
counted at a later stage.  Thus the triangles appearing in the stage sums
are pairwise disjoint across stages; triangles lost when passing to a
critical subgraph are simply not charged.  The edge sets incident with the
successive $I_i$ are also disjoint.  Summing
\eqref{eq:smallCliqueRTstep} over $v\in I_i$ for $i<4s/5$, summing
\eqref{eq:smallCliqueTuranStep} afterwards, and using $|I_i|=k_i$ gives
\[
 \tri(F)+\frac12e(F)
 \ge
 \frac7{10}\sum_{0\le i<4s/5}(s'-i-1)^2
 +\frac12\sum_{4s/5\le i<s'}(s'-i-1)^2.
\]
For each fixed admissible index $i$, the summand $(s'-i-1)^2$ is
nondecreasing in $s'$, and increasing $s'$ only adds further nonnegative
terms to the second sum.  Hence the right-hand side is minimized at the
smallest permitted value $s'=9s/10$.  Changing the integer endpoints by
$O(1)$ changes the sums by only $O(s^2)$, and
\begin{align*}
 \sum_{0\le i<4s/5}(9s/10-i-1)^2
   &=\frac{91}{375}s^3+O(s^2),\\
 \sum_{4s/5\le i<9s/10}(9s/10-i-1)^2
   &=\frac1{3000}s^3+O(s^2).
\end{align*}
Consequently the displayed lower bound equals
\[
 \left(\frac7{10}\cdot\frac{91}{375}
       +\frac12\cdot\frac1{3000}\right)s^3+O(s^2)
 =\frac{5101}{30000}s^3+O(s^2).
\]
Since
\[
 \binom s3=\frac{5000}{30000}s^3+O(s^2),
\]
we obtain, for all sufficiently large $s$,
\begin{equation}\label{eq:smallCliqueGain}
 \tri(F)+\frac12e(F)>1.02\binom s3.
\end{equation}
In particular,
\[
 \frac{5101}{30000}-\frac16=\frac{101}{30000}>0.
\]
Thus the actual leading cubic margin is $101s^3/30000$.  Since
$e(F)\le Ms^2$, the quadratic loss is absorbed once $s$ is sufficiently
large in terms of $M$; for example, the absolute choice
$\sigma_0=10^{-3}$ then gives
\[
 \tri(F)\ge\binom s3+\sigma_0s^3.
\]
Thus $\varepsilon_0$ and $\sigma_0$ are absolute; only the lower threshold
for $s$ depends on the fixed order constant $M$.

Finally, if \eqref{eq:smallCliqueNear} fails, the desired conclusion is immediate after decreasing $\sigma_0$.  This proves the lemma.
\end{proof}

\section{The sparse chromatic core}\label{sec:core}

We prove \cref{thm:core}.  The argument is an edge-count version of the bounded-independence extraction in \cite{FTZ}.

Define
\[
 \psi(z)=\frac{\LogLog z}{\Log^{2/3}z}.
\]

\begin{lemma}\label{lem:uniformPsi}
For every fixed $X>0$,
\[
 \sup_{0\le x\le X}x^{1/3}\psi(x^2L^3)\longrightarrow0
 \qquad\text{as }L\longrightarrow\infty.
\]
\end{lemma}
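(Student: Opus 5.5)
We prove the uniform convergence by splitting the range of $x$ at a threshold tending to zero, using that $\psi$ is bounded and tends to $0$ at infinity.

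First we record two elementary facts about $\psi$. Since $\Log z\ge1$, $\LogLog z=\Log(\Log z)\ge1$, and $\log\log z\le(\log z)^{2/3}$ for large $z$, the function $\psi$ is bounded on $[0,\infty)$ by an absolute constant $B$. Moreover $\psi(z)\to0$ as $z\to\infty$, because $\log\log z/(\log z)^{2/3}\to0$.

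Now fix $X>0$ and $\epsilon>0$, and split the range of $x$ at a threshold $x_0>0$.

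\emph{Small $x$.} For $0\le x\le x_0$ we have $x^{1/3}\psi(x^2L^3)\le x_0^{1/3}B$. Choosing $x_0=(\epsilon/B)^3$ makes this at most $\epsilon$, independently of $L$.

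\emph{Large $x$.} For $x_0\le x\le X$ we have $x^{1/3}\le X^{1/3}$ and $x^2L^3\ge x_0^2L^3$. The argument of $\psi$ therefore tends to infinity uniformly in $x$ as $L\to\infty$. Choose $Z$ such that $\psi(z)\le\epsilon/X^{1/3}$ for every $z\ge Z$. Then for $L\ge(Z/x_0^2)^{1/3}$ the product is at most $\epsilon$. This step uses only the limit $\psi(z)\to0$, not monotonicity of $\psi$.

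Combining the two ranges, the supremum is at most $\epsilon$ for all sufficiently large $L$, which proves the lemma.

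The only point needing care is the boundedness of $\psi$ near the truncation. With the convention $\Log 0=\LogLog 0=1$ we get $\psi(0)=1$, so the small-$x$ bound holds at $x=0$ as well. It also holds in the range $z\le e^e$, where $\LogLog z=1$ and hence $\psi(z)\le1$.
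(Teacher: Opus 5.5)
Your proof is correct and takes the same approach as the paper: split the range of $x$, use boundedness of $\psi$ when $x$ is small and $\psi(z)\to0$ when $x$ is bounded away from zero. The only difference is that the paper splits at the moving threshold $x=1/L$ and uses eventual monotonicity of $\psi$ to get the explicit bound $\max\{L^{-1/3}\sup\psi,\ X^{1/3}\psi(L)\}$, whereas your fixed $\epsilon$-dependent threshold needs only the limit $\psi(z)\to0$.
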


\begin{proof}
The function $\psi$ is bounded and tends to zero at infinity.  If $x\le L^{-1}$, then
\[
 x^{1/3}\psi(x^2L^3)\le L^{-1/3}\sup_{z>0}\psi(z).
\]
If $x>L^{-1}$, then $x^2L^3>L$.  Once $L$ is sufficiently large, $\psi$ is decreasing on $[L,\infty)$, so the expression is at most $X^{1/3}\psi(L)$.  Both bounds tend to zero.
\end{proof}

\begin{proof}[Proof of \cref{thm:core}]
Fix $\eta>0$.  Let $L$ be a sufficiently large constant and put
\[
 y=\frac{r^2}{L}.
\]
Partition $V(G)=A\sqcup B$, where $A$ consists of the vertices lying in at most $y$ triangles.  Since
\[
 \sum_{v\in V(G)}\tri_G(v)=3\tri(G)<3\binom r3<\frac{r^3}{2},
\]
we have
\begin{equation}\label{eq:Bsize}
 |B|<\frac{Lr}{2}.
\end{equation}

Apply \cref{thm:HarrisEdge} to $G[A]$.  Write $m_A=e(G[A])$ and $T_A=\tri(G[A])$.  The edge term is
\[
 \frac{m_A^{1/3}}{\Log^{2/3}m_A}
 \le O(c_\eta^{1/3}r)+o(r).
\]
Indeed, if $m_A\le r^2$ the term is $O(r^{2/3})$; otherwise $\Log m_A\ge2\log r$ and the claimed estimate follows from $m_A\le c_\eta r^3\log^2r$.

Write $x=T_A/r^3$.  The triangle term in \cref{thm:HarrisEdge}, divided by $r$, is
\[
 x^{1/3}\psi(x^2L^3).
\]
First choose $L$ so large that \cref{lem:uniformPsi} makes this term at most $\eta/(8K_H)$.  Then choose $c_\eta$ sufficiently small and $r$ sufficiently large so that the edge term contributes at most $\eta r/(8K_H)$ and the additive $1$ in \cref{thm:HarrisEdge} is at most $\eta r/4$.  We obtain
\[
 \chi(G[A])\le\frac{\eta r}{2},
\]
and therefore
\begin{equation}\label{eq:Bchi}
 \chi(G[B])\ge\left(1-\frac\eta2\right)r.
\end{equation}

Starting from $G[B]$, repeatedly delete an independent set of size greater than a fixed integer $K>L/\eta$, until the remaining induced subgraph $H$ has $\alpha(H)\le K$.  By \eqref{eq:Bsize}, fewer than $\eta r/2$ sets are deleted.  Each costs at most one color, so \eqref{eq:Bchi} yields
\[
 \chi(H)\ge(1-\eta)r.
\]
Also $|H|\le|B|<Lr/2$.  Renaming the order constant proves the theorem.
\end{proof}

\section{Dense common-palette coloring}\label{sec:palette}

Harris supplies the ordinary-coloring architecture and its numerical
optimizations, while Vu supplies the bounded-degree list-coloring input.  We
prove common-palette list analogues using palette reservation, global color
retirement, and recursive parameter inheritance.  The logical dependence of
the estimates is
\[
 \text{reservation and Vu}
 \Longrightarrow
 \{\text{vertex-local},\ \text{layered}\}
 \Longrightarrow
 \{\text{vertex-triangle},\ \text{edge-local}\}
 \Longrightarrow
 \text{edge-triangle}.
\]
Palette reservation supplies disjoint blocks for successive phases, and a
color used during a deletion phase is retired within its block.

\subsection{Palette bookkeeping and a common available color}

\begin{lemma}[Palette reservation]\label{lem:reservation}
For every $\rho,A>0$ there is $p_{\mathrm{res}}=p_{\mathrm{res}}(\rho,A)$ such that the following holds whenever $p\ge p_{\mathrm{res}}$.  Let $n\le p^A$, and
suppose $L(v)\subseteq\cP$, where $|\cP|=p$ and $|L(v)|\ge\rho p$ for
$n$ vertices.  If $1\le q\le\Log p$ is an integer, then $\cP$ has a
partition
\[
 \cP=\cP_1\sqcup\cdots\sqcup\cP_q
\]
such that, for every $v$ and $i$,
\begin{equation}\label{eq:reservationBounds}
 \frac{p}{2q}\le |\cP_i|\le\frac{2p}{q},
 \qquad
 |L(v)\cap\cP_i|\ge\frac{\rho p}{2q}.
\end{equation}
Consequently every restricted list has density at least $\rho/4$ in
$\cP_i$.  Moreover, for every $i$,
\begin{equation}\label{eq:inheritedExponent}
 n\le |\cP_i|^{2A}.
\end{equation}
\end{lemma}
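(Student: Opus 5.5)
We prove the lemma with the probabilistic method: assign each color of $\cP$ independently and uniformly at random to one of the $q$ blocks, then use Chernoff bounds and a union bound over the at most $n q + q$ relevant events.

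For a fixed $i$, $|\cP_i|$ is binomial with mean $p/q$, and $|L(v)\cap\cP_i|$ has mean at least $\rho p/q$. Since $q\le\Log p$, every mean is at least $\rho p/\Log p$. The multiplicative Chernoff bound with deviation factor $1/2$ gives the following failure probabilities:
\begin{itemize}
\item $\Pr[|\cP_i|<p/(2q)]\le\exp(-p/(8q))$;
\item $\Pr[|\cP_i|>2p/q]\le\exp(-p/(3q))$;
\item $\Pr[|L(v)\cap\cP_i|<\rho p/(2q)]\le\exp(-\rho p/(8q))$.
\end{itemize}
The number of events is at most $(n+2)q\le(p^A+2)\Log p$. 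Each failure probability is at most $\exp(-\rho p/(8\Log p))$, so the union bound is below $1$ once
\[
 \frac{\rho p}{8\Log p}>A\log p+\log(3\Log p).
\]
This holds for $p\ge p_{\mathrm{res}}(\rho,A)$. Hence some partition satisfies \eqref{eq:reservationBounds}.

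The density consequence is immediate:
\[
 \frac{|L(v)\cap\cP_i|}{|\cP_i|}\ge\frac{\rho p/(2q)}{2p/q}=\frac\rho4.
\]

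For \eqref{eq:inheritedExponent}, combine $|\cP_i|\ge p/(2q)\ge p/(2\Log p)$ with $p^{1/2}\ge2\Log p$ for large $p$. This gives $|\cP_i|\ge p^{1/2}$, and therefore
\[
 |\cP_i|^{2A}\ge p^A\ge n.
\]
Enlarge $p_{\mathrm{res}}$ to cover this step as well.

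The only delicate point is that the union bound must beat a polynomial number $p^A\Log p$ of events. This works because every block still has size about $p/\Log p$, so the Chernoff tails decay like $\exp(-\Omega_\rho(p/\log p))$. That is also where the hypothesis $q\le\Log p$ is used. When $q=1$ the statement is trivial, but the argument covers that case anyway.
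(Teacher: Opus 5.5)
Your proposal is correct and follows the paper's proof essentially verbatim. Both assign each color uniformly at random to one of the $q$ blocks, use Chernoff tails of order $\exp(-\Omega_\rho(p/\Log p))$ to beat the $(p^A+2)\Log p$ events in a union bound, and then use $|\cP_i|\ge p/(2\Log p)\ge p^{1/2}$ to deduce $n\le|\cP_i|^{2A}$. Your version only makes the Chernoff constants explicit; note that the uniform bound $\exp(-\rho p/(8\Log p))$ tacitly uses $\rho\le1$, which is automatic whenever any list exists.
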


\begin{proof}
Assign each color independently and uniformly to one of the $q$ parts.
Chernoff bounds show that any one of the inequalities in
\eqref{eq:reservationBounds} fails with probability
$\exp(-\Omega_\rho(p/q))$.  There are at most $(n+2)q\le(p^A+2)\Log p$
relevant events, while $p/q\ge p/\Log p$, so a union bound is valid for
all sufficiently large $p$.

Finally, $|\cP_i|\ge p/(2\Log p)\ge p^{1/2}$ for all sufficiently large
$p$.  Hence $p^A\le |\cP_i|^{2A}$, which proves
\eqref{eq:inheritedExponent}.
\end{proof}

We shall repeatedly use the following elementary logarithmic monotonicity,
which is the truncated-log version of \cite[Observation~1.6]{Harris}.

\begin{lemma}[Logarithmic monotonicity]\label{lem:logMonotonicity}
Let $0\le a\le1$ and $0<x_0\le x\le x_1\le z$.  Then
\[
 x_0\Log^a(z/x_0)
 \le x\Log^a(z/x)
 \le x_1\Log^a(z/x_1).
\]
\end{lemma}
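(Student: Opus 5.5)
The approach is to show that the function $g(x)=x\Log^a(z/x)$ is nondecreasing on the interval $(0,z]$. Both inequalities then follow at once from $x_0\le x\le x_1\le z$. The truncation in $\Log$ is the only subtlety, so I will split $(0,z]$ according to whether the truncation is active.

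Since $x\le z$, we have $z/x\ge1$, so $\Log(z/x)=\max\{1,\log(z/x)\}$. This gives two regimes.

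\emph{Regime $x\ge z/e$.} Here $\log(z/x)\le 1$, so $\Log(z/x)=1$ and $g(x)=x$, which is increasing.

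\emph{Regime $0<x\le z/e$.} Here $\Log(z/x)=\log(z/x)\ge1$, so $g(x)=x\log^a(z/x)$. Writing $u=\log(z/x)\ge1$, this becomes $g=z e^{-u}u^a$. Its derivative in $u$ is
\[
 z e^{-u}u^{a-1}(a-u)\le0 \qquad\text{for } u\ge1\ge a.
\]
Since $u$ decreases as $x$ increases, $g$ is nondecreasing in $x$ on this regime.

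The function $g$ is continuous at $x=z/e$, where both expressions equal $z/e$. Hence $g$ is nondecreasing on all of $(0,z]$, and applying this monotonicity to $x_0\le x$ and to $x\le x_1$ yields the two claimed inequalities.

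The case $a=0$ is trivial, since then $g(x)=x$. The only potential obstacle is checking the sign of the derivative in the second regime, which uses exactly the hypotheses $a\le1$ and $u\ge1$. If one prefers to avoid calculus, the same conclusion follows from the elementary inequality $(1+h/u)^a\le e^{h}$ for $h\ge0$.
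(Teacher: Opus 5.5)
Your proof is correct and follows the paper's argument. The paper likewise notes that $x\Log^a(z/x)$ is just $x$ on the truncated range and has derivative $(\log(z/x))^{a-1}(\log(z/x)-a)\ge0$ where $z/x>e$, which is your computation after the substitution $u=\log(z/x)$; your explicit check that the two pieces agree at $x=z/e$ is a detail the paper leaves implicit.
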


\begin{proof}
On the range where $z/x>e$, the derivative of
$x(\log(z/x))^a$ is
$(\log(z/x))^{a-1}(\log(z/x)-a)\ge0$.  On the remaining range the
truncated logarithm equals $1$, so the function is simply $x$.
\end{proof}

For nonnegative arguments define the five control functions
\begin{align*}
 \Phi_V(u)&=\sqrt{\frac{u}{\Log u}},
 &\Phi_E(u)&=\frac{u^{1/3}}{\Log^{2/3}u},\\
 \Phi_U(u,v)&=\frac{u^{1/3}v_+^{1/3}}
                    {\Log^{2/3}(u^2/v_+)},
 &\Phi_W(u,v)&=\frac{u^{1/4}v_+^{1/4}}
                    {\Log^{3/4}(u/v_+)},
\end{align*}
and
\[
 \Phi_T(u,v)=
 \begin{cases}
  0,&u=0,\\[2mm]
  \displaystyle
  u^{1/3}\frac{\LogLog(u^2/v_+^3)}
                   {\Log^{2/3}(u^2/v_+^3)},&u>0.
 \end{cases}
\]
Thus the displayed bounds in the dense estimates below are sums of
members of this fixed family.

\begin{lemma}[Control-function monotonicity]\label{lem:controlMonotonicity}
For every $B\ge1$ there is $C_B\ge1$ with the following property.  If
$0\le u'\le Bu$ and $v'_+\le Bv_+$, then
\begin{align*}
 \Phi_V(u')&\le C_B\Phi_V(u),
 &\Phi_E(u')&\le C_B\Phi_E(u),\\
 \Phi_U(u',v')&\le C_B\Phi_U(u,v),
 &\Phi_W(u',v')&\le C_B\Phi_W(u,v),\\
 \Phi_T(u',v')&\le C_B\Phi_T(u,v).
\end{align*}
When $u=0$, the corresponding assertion is interpreted as $u'=0$.
\end{lemma}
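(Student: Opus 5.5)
The plan is to reduce every case to the behaviour of two one-variable functions: $x\mapsto \Log x$ under bounded rescaling of its argument, and the truncated power $x\mapsto x^a$.  The basic fact is that for $B\ge1$ and all $z\ge0$ we have $\Log(z/B)\ge \Log z/(1+\log B)$.  Indeed, when $z\le eB$ the right side is at most $1$ (since $\Log z\le 1+\log B$ there), and when $z>eB$ we have $\log(z/B)=\log z-\log B\ge \log z/(1+\log B)$ because $\log z\ge1$.  Symmetrically, $\Log(Bz)\le(1+\log B)\Log z$.  So a bounded change in the argument of $\Log$ changes its value by at most a bounded factor, in either direction.

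For $\Phi_V$ and $\Phi_E$ I would first use monotonicity to replace $u'$ by $Bu$, and then apply the bound above.  Monotonicity in $u$ is exactly \cref{lem:logMonotonicity} in disguise.  Alternatively, one can check the derivative of $u^a/\Log^b u$ directly: with $b/a\le 2$ in both cases, it is nondecreasing wherever $\log u\ge b/a$, and it is bounded on the initial range.  So I would argue directly: if $u'\le u$ and $u'$ lies in the monotone range, then $\Phi(u')\le\Phi(u)$.  If $u'$ lies in the bounded initial range $u'\le e^2$, then $\Phi(u')=O(1)$, and $\Phi(u)\ge c>0$ whenever $u\ge1$.  Some care is needed for $u<1$, where $u'\le Bu<B$ and $\Phi(u')\le (u')^{a}\le B^a u^a$ with $\Phi(u)=u^a$; here $\Log$ equals $1$ on $[0,e]$.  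Scaling from $u$ to $Bu$ then multiplies the numerator by $B^a$ and divides the denominator by at most a factor of $(1+\log B)^b$, by the basic fact.

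For the two-variable functions $\Phi_U$ and $\Phi_W$, the numerators $u^{1/3}v_+^{1/3}$ and $u^{1/4}v_+^{1/4}$ grow by at most $B^{2/3}$ and $B^{1/2}$ respectively.  The denominators are subtler, because $v_+$ enters the $\Log$ inversely, so increasing $v$ decreases the denominator and this works in our favour only partially.  Two steps handle this.  First, lowering $u'$ only lowers $\Phi$; this is monotonicity in $u$ at fixed $v$, via the same derivative check applied to $w=u^2/v_+$ (resp.\ $u/v_+$), with the prefactor written as a power of $w$ times a power of $v_+$.  Second, I would handle the $v'$ variable by rewriting $\Phi_U=v_+^{1/2}\,(w^{1/6}/\Log^{2/3}w)$ with $w=u^2/v_+$.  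Raising $v_+$ to $Bv_+$ multiplies the prefactor by $B^{1/2}$ and divides $w$ by $B$.  The function $w\mapsto w^{1/6}/\Log^{2/3}w$ is quasi-monotone (increasing beyond $e^4$, bounded on the initial range), so dividing $w$ by $B$ changes it by at most a constant factor.  A decrease of $v'$ below $v$ requires a separate observation: it is handled by writing $\Phi_U=u^{1/3}v_+^{1/3}/\Log^{2/3}(u^2/v_+)$ and applying \cref{lem:logMonotonicity} with $x=v_+$, $a=2/3$, $z=u^2$.  I would then treat the residual case $v_+>u^2$ separately, since there the $\Log$ is $1$ and the claim is trivial.  $\Phi_W$ is identical with exponents $1/4,3/4$.

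$\Phi_T$ is the main obstacle, because of the $\LogLog$ factor and the exponent $3$ on $v_+$.  When $u'=0$ the statement is trivial.  Otherwise I would write $\Phi_T=u^{1/3}\psi(u^2/v_+^3)$ with $\psi$ as in \cref{sec:core}; that $\psi$ is bounded above and below by positive constants on compact ranges and decreasing at infinity was already used in \cref{lem:uniformPsi}.  The key claim is that $\psi(z')\le C\psi(z)$ whenever $z'\ge z/B^3$ and $z'\le B^2z$.  Whenever $z'\ge z/B^3$, this follows from $\Log(z/B^3)\ge\Log z/(1+3\log B)$ together with $\LogLog$ being essentially monotone, so that $\LogLog(z')\le \LogLog(B^2z)\le C\LogLog z$.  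The danger is when $v'$ is much smaller than $v$, which makes $z'$ much larger than $z$.  But in that direction $\psi$ decreases past its maximum point, and for $z$ below that point $\psi$ is bounded below.  So $\psi(z')\le \sup\psi\le C\psi(z)$ on the bounded range of $z$, while on the large range $\psi(z')\le\psi(z/B^3)\le C\psi(z)$ by monotonicity.  This gives the claim, and multiplying by $(u')^{1/3}\le B^{1/3}u^{1/3}$ completes the proof, with $C_B$ the maximum of the constants obtained across the five functions.
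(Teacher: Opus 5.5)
There is a genuine gap, and it is in the $\Phi_T$ case. Your arguments for $\Phi_V,\Phi_E,\Phi_U,\Phi_W$ are fine. For $\Phi_U,\Phi_W$ you first push $u'$ up to $Bu$ by quasi-monotonicity and then split on whether $v'_+\le v_+$. The paper instead normalizes by $x=u/v_+^{1/2}$ (resp.\ $x=u/v_+$) and splits on $x'\le x$ versus $x'>x$. Your appeal to \cref{lem:logMonotonicity} when $v'_+<v_+$ is misplaced but harmless: at fixed $u$, both functions are plainly nondecreasing in $v_+$.

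For $\Phi_T$ you prove $\psi(z')\le C\psi(z)$ only when $z'\ge z/B^3$, and then finish by multiplying with $(u')^{1/3}\le B^{1/3}u^{1/3}$. The hypotheses give no lower bound on $u'$, so $z'=u'^2/(v'_+)^3$ can be arbitrarily small compared with $z=u^2/v_+^3$. In that regime your key claim is false. Take $B=1$, $v=v'=0$, $u'=e^3$ fixed and $u\to\infty$: then $\psi(z)=\psi(u^2)\to0$, while $\psi(z')=\psi(e^6)>0$ stays fixed. The desired inequality $\Phi_T(u',v')\le C\Phi_T(u,v)$ still holds there, but only because the small factor $(u'/u)^{1/3}$ outweighs the large ratio $\psi(z')/\psi(z)$. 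Bounding the two factors separately throws away exactly this compensation.

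What is missing is the quasi-increase of $h_T(x)=x^{1/3}\LogLog(x^2)/\Log^{2/3}(x^2)$. It equals $x^{1/3}$ for $x^2\le e$. With $s=\log x^2$, its logarithmic derivative is $\frac13+\frac{2}{s\log s}-\frac{4}{3s}$, which is nonnegative once $s\ge4$. The transition range in between is compact. Since $\Phi_T(u,v)=v_+^{1/2}h_T(u/v_+^{3/2})$, this gives $\Phi_T(u',v')\le C_0\Phi_T(Bu,v')$. That is the analogue of the first step you did take for $\Phi_U$. After it, $B^2u^2/(v'_+)^3\ge z/B$, so your case analysis of $\psi$ applies and yields $\Phi_T(Bu,v')\le C_B B^{1/3}\Phi_T(u,v)$.

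This repaired argument is essentially the paper's proof. The paper splits on $x'\le x$, which it handles by the quasi-increase of $h_T$, versus $x'>x$. In the second case it uses $(v'_+)^{1/2}x'^{1/3}\le B^{1/3}v_+^{1/2}x^{1/3}$ together with the quasi-decrease of $\LogLog(x^2)/\Log^{2/3}(x^2)$.
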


\begin{proof}
Write $L(x)=\Log x$ and $L_2(x)=\LogLog x$.  For $x\ge e^3$ all
truncated logarithms below agree with ordinary logarithms.  Put
$s=\log(x^2)$.  Direct differentiation gives
\begin{align}
 x\frac{d}{dx}\log\left(\frac{x^{1/3}}{L(x^2)^{2/3}}\right)
 &=\frac13-\frac{4}{3s},\notag\\
 x\frac{d}{dx}\log\left(\frac{x^{1/4}}{L(x)^{3/4}}\right)
 &=\frac14-\frac{3}{4\log x},\notag\\
 x\frac{d}{dx}\log\left(
 x^{1/3}\frac{L_2(x^2)}{L(x^2)^{2/3}}\right)
 &=\frac13+\frac{2}{s\log s}-\frac{4}{3s},\label{eq:controlDerivatives}\\
 x\frac{d}{dx}\log\left(
 \frac{L_2(x^2)}{L(x^2)^{2/3}}\right)
 &=\frac{2}{s\log s}-\frac{4}{3s}.\notag
\end{align}
Since $s\ge6$ and $\log s>3/2$, the first three functions in
\eqref{eq:controlDerivatives} are nondecreasing on $[e^3,\infty)$, while
the last is nonincreasing there.  Similarly,
$x^{1/2}/L(x)^{1/2}$ and $x^{1/3}/L(x)^{2/3}$ are nondecreasing on this
range.  On the fully truncated ranges these functions reduce to powers of
$x$; the finitely many transition intervals are compact.  Hence there is an
absolute constant $C_0$ such that, whenever $0<x\le x'$, one has
\begin{align}
 \frac{x^{1/3}}{L(x^2)^{2/3}}
 &\le C_0\frac{x'^{1/3}}{L(x'^2)^{2/3}},
 &
 \frac{x^{1/4}}{L(x)^{3/4}}
 &\le C_0\frac{x'^{1/4}}{L(x')^{3/4}},\label{eq:quasiIncreasingUW}\\
 x^{1/3}\frac{L_2(x^2)}{L(x^2)^{2/3}}
 &\le C_0x'^{1/3}\frac{L_2(x'^2)}{L(x'^2)^{2/3}},
 &
 \frac{L_2(x'^2)}{L(x'^2)^{2/3}}
 &\le C_0\frac{L_2(x^2)}{L(x^2)^{2/3}}.\label{eq:quasiIncreasingT}
\end{align}
The same quasi-increase holds for
$x^{1/2}/L(x)^{1/2}$ and $x^{1/3}/L(x)^{2/3}$.  We shall also use the
exact monotonicities
\begin{equation}\label{eq:normalizedDecreasing}
 x\longmapsto
 \frac{x^{-2/3}}{L(x^2)^{2/3}}
 \quad\text{and}\quad
 x\longmapsto
 \frac{x^{-1/4}}{L(x)^{3/4}},
\end{equation}
which are nonincreasing on $(0,\infty)$.

The one-variable conclusions for $\Phi_V$ and $\Phi_E$ now follow at
once from $u'\le Bu$: first compare $u'$ with $Bu$, and then compare
$Bu$ with $u$ by a fixed dilation.

For the two-parameter functions, put $w=v_+$ and $w'=v'_+$, so
$w'\le Bw$.  Assume $u>0$, since otherwise $u'=0$ and there is nothing
to prove.  For $\Phi_U$, set
\[
 x=\frac{u}{w^{1/2}},
 \qquad
 x'=\frac{u'}{w'^{1/2}},
 \qquad
 h_U(x)=\frac{x^{1/3}}{L(x^2)^{2/3}}.
\]
Then $\Phi_U(u,v)=w^{1/2}h_U(x)$.  If $x'\le x$, the first inequality
in \eqref{eq:quasiIncreasingUW} and $w'\le Bw$ give
$\Phi_U(u',v')\le C_0B^{1/2}\Phi_U(u,v)$.  If $x'>x$, then
$w'^{1/2}x'=u'\le Bu=Bw^{1/2}x$, and hence
\[
 \Phi_U(u',v')
 \le Bw^{1/2}x\,\frac{h_U(x')}{x'}
 \le Bw^{1/2}x\,\frac{h_U(x)}x
 =B\Phi_U(u,v),
\]
where the second inequality is the first monotonicity in
\eqref{eq:normalizedDecreasing}.

For $\Phi_W$, put $x=u/w$, $x'=u'/w'$, and
$h_W(x)=x^{1/4}/L(x)^{3/4}$.  If $x'\le x$, use the second inequality
in \eqref{eq:quasiIncreasingUW} and $w'\le Bw$.  If $x'>x$, the relation
$w'x'=u'\le Bwx$ gives
\[
 \Phi_W(u',v')
 \le B^{1/2}w^{1/2}x^{1/2}
       \frac{h_W(x')}{x'^{1/2}}
 \le B^{1/2}w^{1/2}x^{1/2}
       \frac{h_W(x)}{x^{1/2}}
 =B^{1/2}\Phi_W(u,v),
\]
by the second monotonicity in \eqref{eq:normalizedDecreasing}.

Finally, for $\Phi_T$, put $x=u/w^{3/2}$, $x'=u'/w'^{3/2}$ and
\[
 h_T(x)=x^{1/3}\frac{L_2(x^2)}{L(x^2)^{2/3}}.
\]
Thus $\Phi_T(u,v)=w^{1/2}h_T(x)$.  If $x'\le x$, use the first
inequality in \eqref{eq:quasiIncreasingT} and $w'\le Bw$.  If $x'>x$,
then $w'^{3/2}x'=u'\le B w^{3/2}x$, and therefore
\begin{align*}
 \Phi_T(u',v')
 &\le B^{1/3}w^{1/2}x^{1/3}
       \frac{L_2(x'^2)}{L(x'^2)^{2/3}}\\
 &\le C_0B^{1/3}w^{1/2}x^{1/3}
       \frac{L_2(x^2)}{L(x^2)^{2/3}}
 =C_0B^{1/3}\Phi_T(u,v),
\end{align*}
where the second line is the quasi-decrease in
\eqref{eq:quasiIncreasingT}.  Taking the largest of the finitely many
constants proves the lemma.
\end{proof}

\begin{lemma}[Recursive palette-parameter inheritance]
\label{lem:parameterInheritance}
Fix $\rho,A>0$, an integer $q_0\ge1$, and $B\ge1$.  Suppose a palette
of size $p$ is split by \cref{lem:reservation} into $q\le q_0$ pieces,
and let $P'$ be one piece.  Suppose a recursive subproblem has at most
$n'\le n\le p^A$ relevant vertices and control quantity $R'\le BR$.
Let $K_*$ and $p_{0,*}$ be, respectively, the leading constant and the
lower palette threshold in the recursive theorem with parameters
$(\rho/4,2A+c)$, where $c\ge0$ is fixed.  If
\begin{equation}\label{eq:inheritanceParentCondition}
 p\ge 2q_0\max\{BK_*R,p_{0,*}\},
\end{equation}
then
\[
 |P'|\ge\frac{p}{2q_0},
 \qquad
 \text{list density in $P'$ at least }\frac\rho4,
 \qquad
 n'\le |P'|^{2A}\le |P'|^{2A+c},
\]
and, in addition,
\[
 |P'|\ge K_*R',
 \qquad
 |P'|\ge p_{0,*}.
\]

After any fixed number $h$ of nested reservations, the inherited density
and exponent may be taken to be $\rho/4^h$ and $2^hA+O_h(1)$.
\end{lemma}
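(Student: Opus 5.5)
The plan is to read off every conclusion directly from \cref{lem:reservation}, applied with the same $(\rho,A)$ and with $q\le q_0$ pieces, and then verify the two recursive thresholds using the hypothesis \eqref{eq:inheritanceParentCondition}. Since $q_0$ is fixed and $p$ is large, we may assume $p\ge p_{\mathrm{res}}(\rho,A)$ and $q\le q_0\le\Log p$. Then \eqref{eq:reservationBounds} gives $|P'|\ge p/(2q)\ge p/(2q_0)$. It also gives $|L(v)\cap P'|\ge\rho p/(2q)$. Together with $|P'|\le 2p/q$, this yields list density at least $(\rho p/(2q))/(2p/q)=\rho/4$ in $P'$. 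For the vertex count, $n'\le n\le p^A$, and \eqref{eq:inheritedExponent} gives $n\le|P'|^{2A}$. Since $|P'|\ge1$ and $c\ge0$, we also get $|P'|^{2A}\le|P'|^{2A+c}$. So the subproblem is a legitimate instance of the recursive theorem with parameters $(\rho/4,2A+c)$.

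The threshold inequalities come next. From \eqref{eq:inheritanceParentCondition}, $p/(2q_0)\ge BK_*R$ and $p/(2q_0)\ge p_{0,*}$. Hence
\[
 |P'|\ge\frac{p}{2q_0}\ge BK_*R\ge K_*R',
 \qquad
 |P'|\ge p_{0,*},
\]
where the middle step uses $R'\le BR$ and $K_*\ge0$. This is the whole one-level statement. The only subtlety is that the reservation lemma needs $p\ge p_{\mathrm{res}}$. I would absorb this either by enlarging the threshold, taking $p_{0,*}\ge p_{\mathrm{res}}$ in the application, or by noting that $p\ge 2q_0p_{0,*}$ can be assumed to exceed $p_{\mathrm{res}}$ when thresholds are chosen.

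For the nested statement I would induct on $h$. Suppose that after $h-1$ reservations the current palette $P^{(h-1)}$ has density at least $\rho/4^{h-1}$ and exponent $A_{h-1}=2^{h-1}A+O_{h-1}(1)$, meaning $n\le|P^{(h-1)}|^{A_{h-1}}$. Applying the one-level argument with $(\rho/4^{h-1},A_{h-1})$ gives density $\rho/4^h$ and exponent $2A_{h-1}=2^hA+O_h(1)$. The additive constants $c$ accumulate but remain bounded in terms of $h$. At each level we need the current palette size to exceed the reservation threshold for the current parameters. Because $h$ is fixed, only finitely many parameter pairs occur, so taking the maximum of their thresholds, and of the factors $2q_0$ compounded $h$ times, gives a single lower bound on $p$.

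I expect the main obstacle to be bookkeeping rather than mathematics. One point is making sure the recursive constants $K_*,p_{0,*}$ are fixed before the parent's constants, so the definitions are not circular. This works because the recursive parameters $(\rho/4,2A+c)$ are determined by $(\rho,A)$ alone. The other point is the requirement $q\le\Log p$ in \cref{lem:reservation}, which holds for $p$ large in terms of $q_0$.
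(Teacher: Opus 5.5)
Your proposal is correct and follows the paper's proof. In both, the first three conclusions are read off from \eqref{eq:reservationBounds}--\eqref{eq:inheritedExponent}, the thresholds follow from the chain $|P'|\ge p/(2q_0)\ge\max\{BK_*R,p_{0,*}\}\ge\max\{K_*R',p_{0,*}\}$, and the nested statement follows by iteration. Your extra care about $p\ge p_{\mathrm{res}}$ and $q\le\Log p$ is harmless but unnecessary, since the lemma's hypothesis already presupposes that the split by \cref{lem:reservation} has been carried out.
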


\begin{proof}
The first three assertions are exactly
\eqref{eq:reservationBounds}--\eqref{eq:inheritedExponent}.  Under
\eqref{eq:inheritanceParentCondition},
\[
 |P'|\ge\frac p{2q_0}
 \ge\max\{BK_*R,p_{0,*}\}
 \ge\max\{K_*R',p_{0,*}\}.
\]
Enlarging a polynomial exponent only weakens the vertex-bound hypothesis.
Iteration proves the final assertion.
\end{proof}

The estimates below are proved in the order
\[
 \mathrm V,\ \mathrm L,\ \mathrm{VT},\ \mathrm{EL},\ \mathrm{ET}.
\]
Thus, when a parent estimate is proved, the constants in every child
estimate have already been fixed.  The inheritance lemma transfers those
child hypotheses to the reserved palettes; the resulting calls are
summarized in \cref{tab:recursiveCalls} after the five estimates.

\begin{lemma}[A common available color]\label{lem:weightedIndependent}
Let $R$ be a graph on a vertex set $S$ with at most $y$ edges.  Let
$M(u)$ be subsets of a palette $P$ such that
\[
 |M(u)|\ge\sigma|P|
 \qquad(u\in S).
\]
Then there are a color $c\in P$ and an independent set $I\subseteq S$
such that $c\in M(u)$ for every $u\in I$ and
\[
 |I|\ge\frac{\sigma^2|S|^2}{2y+|S|}.
\]
\end{lemma}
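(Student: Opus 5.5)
Double counting first picks a good colour, and then a Turán-type (Caro--Wei) bound extracts a large independent set from the vertices that can use that colour. For each colour $c\in P$, let $S_c=\{u\in S: c\in M(u)\}$. The first step counts incidences:
\[
 \sum_{c\in P}|S_c|=\sum_{u\in S}|M(u)|\ge\sigma|P||S|.
\]
We also need control over the edges inside each $S_c$. Write $e_c=e(R[S_c])$. An edge $uw$ lies in $R[S_c]$ exactly when $c\in M(u)\cap M(w)$, so
\[
 \sum_c e_c=\sum_{uw\in E(R)}|M(u)\cap M(w)|\le y|P|.
\]

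The second step applies Caro--Wei, or equivalently Turán's theorem in the form $\alpha(H)\ge n^2/(2e+n)$, to $H=R[S_c]$. This gives an independent set $I_c\subseteq S_c$ with
\[
 |I_c|\ge\frac{|S_c|^2}{2e_c+|S_c|}.
\]
Every $u\in I_c$ has $c\in M(u)$, so it remains to show that some $c$ makes the right-hand side at least $\sigma^2|S|^2/(2y+|S|)$.

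The third step is an averaging, and it is the only real obstacle: the bound is a ratio, so it cannot simply be averaged term by term. The plan is to use Cauchy--Schwarz in Engel (Titu) form. Let $a_c=|S_c|$ and $b_c=2e_c+|S_c|$, and restrict the sums to colours with $a_c>0$ (those with $a_c=0$ contribute to neither sum). Then
\[
 \max_c\frac{a_c^2}{b_c}\ \ge\ \frac{\sum_c a_c^2/b_c}{|P|}\ \ge\ \frac{(\sum_c a_c)^2}{|P|\sum_c b_c}.
\]
By the first step, the numerator is at least $\sigma^2|P|^2|S|^2$. The denominator is at most
\[
 |P|\bigl(2y|P|+|S||P|\bigr),
\]
using $\sum_c|S_c|\le|P||S|$ because each $S_c\subseteq S$. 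The ratio is therefore at least $\sigma^2|S|^2/(2y+|S|)$.

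The maximizing colour $c$ is the required colour, and $I=I_c$ is the required independent set. Degenerate cases ($S$ empty, or all $S_c$ empty) are trivial, because the claimed bound is then $0$ (if $\sigma>0$ and $S\neq\emptyset$, some $S_c$ is nonempty).
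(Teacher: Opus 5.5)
Your proof is correct and follows the paper's argument essentially verbatim. You define $S_c$, apply Tur\'an's bound $|S_c|^2/(2e_c+|S_c|)$ in each $R[S_c]$, and combine these via Cauchy--Schwarz in Engel form using $\sum_c|S_c|\ge\sigma|P||S|$, $\sum_c e_c\le y|P|$ and $\sum_c|S_c|\le|P||S|$, then average over the $|P|$ colours. Your explicit exclusion of colours with $S_c=\emptyset$, where the Tur\'an ratio would be $0/0$, is a small point the paper leaves implicit.
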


\begin{proof}
For $c\in P$, let $S_c=\{u\in S:c\in M(u)\}$ and put
$n_c=|S_c|$, $m_c=e(R[S_c])$.  Tur\'an's theorem gives an independent
set in $S_c$ of size at least
\[
 \frac{n_c^2}{2m_c+n_c}.
\]
By Cauchy--Schwarz,
\[
 \sum_{c\in P}\frac{n_c^2}{2m_c+n_c}
 \ge
 \frac{(\sum_cn_c)^2}{2\sum_cm_c+\sum_cn_c}.
\]
Now $\sum_cn_c\ge\sigma|P||S|$, while
$\sum_cm_c\le|P|y$ and $\sum_cn_c\le|P||S|$.  Dividing the resulting
lower bound by $|P|$ proves the claim.
\end{proof}

\subsection{The vertex and layered estimates}

\begin{lemma}[Dense vertex-local estimate]\label{lem:denseVertex}
For every $\rho,A>0$ there are constants $K=K(\rho,A)$ and $p_0=p_0(\rho,A)$ such that the following holds whenever $p\ge p_0$.  Let $F$ have $n\le p^A$ vertices
and local triangle bound $y$, and suppose $|L(v)|\ge\rho p$ from a
common palette of $p$ colors.  If
\[
 K\left(
 \sqrt{\frac n{\Log n}}
 +\frac{n^{1/3}y_+^{1/3}}{\Log^{2/3}(n^2/y_+)}
 \right)\le p,
\]
then $F$ is $L$-colorable.
\end{lemma}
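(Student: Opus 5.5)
We follow Harris's proof of the vertex-local bound for ordinary coloring, adapted to lists by reserving two disjoint palette blocks. The two blocks play the roles of a high-degree phase and a bounded-degree phase. Apply \cref{lem:reservation} with $q=2$, giving $\cP=\cP_1\sqcup\cP_2$ with $|\cP_i|\ge p/4$ and $|L(v)\cap\cP_i|\ge\rho p/4$. Fix a degree threshold
\[
 d=\frac{p}{\rho' K_V}\,\Log\!\Bigl(\frac{p^2}{y_+}\Bigr)\quad(\text{up to constants}),
\]
chosen so that \cref{thm:Vu} colors any graph of maximum degree at most $d$ and local triangle bound $y$ from lists of size $\rho p/4$. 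Such a choice is possible because $d/\Log(d^2/y_+)$ is quasi-increasing; this is checked via \cref{lem:controlMonotonicity} applied to the same kind of function. The plan has two phases.

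\textbf{Phase 1 (high degree, palette $\cP_1$).} While some vertex of the current graph has degree larger than $d$, we greedily extract color classes. Let $S$ be the set of remaining uncolored vertices and $M(u)=L(u)\cap\cP_1$ minus the colors already retired. We apply \cref{lem:weightedIndependent} not to all of $S$ but to the neighborhood $N(v)\cap S$ of a high-degree vertex $v$. The graph induced on $N(v)$ has at most $y$ edges, since every edge in $N(v)$ forms a triangle through $v$. The lemma therefore yields a color $c$ and an independent set of size at least
\[
 \frac{(\rho/8)^2 d^2}{2y+d}
\]
inside $N(v)$, all of whose members may receive $c$. We color this set with $c$ and retire $c$ globally within $\cP_1$. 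As long as at most half of $\cP_1$ has been retired, every list keeps density at least $\rho/8$, so $\sigma=\rho/8$ is valid throughout.

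\textbf{Bounding the number of retired colors.} Each step colors at least $\min\{d,\,d^2/y_+\}$ vertices, up to constants. Hence the number of retired colors is at most
\[
 \frac{n}{\min\{d,\,d^2/y_+\}}\lesssim \frac nd+\frac{n y_+}{d^2}.
\]
We then need both $n/d$ and $ny_+/d^2$ to be at most $|\cP_1|/2\approx p/8$.
\begin{itemize}
\item With $d\approx p/\Log(p^2/y_+)$, the requirement $ny_+/d^2\lesssim p$ becomes $p^3\gtrsim ny_+\Log^2(p^2/y_+)$. This is exactly the hypothesis $p\gtrsim n^{1/3}y_+^{1/3}/\Log^{2/3}(n^2/y_+)$.
\item The requirement $n/d\lesssim p$ becomes $p^2\gtrsim n/\Log$, which is the term $\sqrt{n/\Log n}$.
\end{itemize}
In both items the logarithm of $p$-quantities must be traded for logarithms of $n$-quantities. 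This uses $n\le p^A$, which makes $\Log p$ and $\Log n$ comparable up to a factor depending on $A$ whenever $n\ge p^{\Omega(1)}$. When $n$ is smaller, $p$ dominates trivially and greedy coloring works. \cref{lem:logMonotonicity} handles the replacement of $p$ by the extremal value of $p$ allowed by the hypothesis.

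\textbf{Phase 2 (bounded degree, palette $\cP_2$).} The remaining uncolored vertices induce a graph of maximum degree at most $d$ with local triangle bound $y$. Colored vertices only use $\cP_1$ colors, so they impose no constraint within $\cP_2$. We color the remainder from $L(v)\cap\cP_2$ by \cref{thm:Vu}.

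\textbf{Main obstacle.} The main obstacle is the case analysis matching $\Log(d^2/y_+)$ with $\Log(n^2/y_+)$ in the regimes $y_+\ge d^2$ and $y_+< d^2$. In the regime $d^2\le y_+$ the neighborhood bound gives only about $d^2/y$ vertices per step, and Vu's bound degenerates to greedy coloring. I would handle this by choosing $d$ through the implicit equation
\[
 d=c\rho p\,\Log(d^2/y_+),
\]
and checking both terms separately, using $n\le p^A$ throughout.
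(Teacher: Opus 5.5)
Your architecture is the paper's. You reserve two blocks by \cref{lem:reservation}, run a deletion phase that applies \cref{lem:weightedIndependent} inside the neighborhood of a vertex of degree greater than $d$ and retires the shared color, and finish with \cref{thm:Vu} on the residual graph of maximum degree at most $d$ in the other block. The only structural difference is how $d$ is chosen. You take the largest degree Vu tolerates, $d=c\rho p\Lambda$ with $\Lambda=\Log(d^2/y_+)$, and then must show that the deletion cost $ny_+/d^2+n/d$ is $O_\rho(p)$. The paper instead fixes $d$ from $n$ and $Y=\min\{y_+,\binom n2\}$: it takes $d=\sqrt{nN}$ when $Y\le\sqrt{nN}$ and $d=(nY\Log(n^2/Y))^{1/3}$ otherwise, with $N=\Log n$. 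All three costs are then bounded by the two terms of the hypothesis directly. Both routes work, but the verification you deferred is the real content, and as written it has three problems.

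First, retiring up to half of $\cP_1$ does not preserve density $\rho/8$. A vertex is guaranteed only $\rho p/4$ colors in $\cP_1$, and for $\rho\le1/2$ this is at most $|\cP_1|/2$, so all of them may be retired. The threshold must be $(\rho/8)|\cP_1|$ retired colors, as in the paper; this only changes constants in $K(\rho,A)$.

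Second, your first bullet switches to $d\approx p/\Log(p^2/y_+)$. The resulting condition $p^3\gtrsim ny_+\Log^2(p^2/y_+)$ is not the hypothesis. With your actual $d=c\rho p\Lambda$, the condition is $p^3\Lambda^2\gtrsim_\rho ny_+$, which follows from the hypothesis once $\Lambda\gtrsim f$, where here $f=\Log(n^2/y_+)$.

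Third, that comparison does not come from $\Log p\asymp_A\Log n$, since $y_+$ sits inside both logarithms. It comes from the hypothesis itself. From $p\ge K(ny_+)^{1/3}f^{-2/3}$ you get $p^2/y_+\ge K^2(n^2/y_+)^{1/3}f^{-4/3}$, so $\Lambda\ge\Log(c^2\rho^2p^2/y_+)\ge c'f$ once $K\ge1/(c\rho)$. For the other cost, $n/d\lesssim_\rho p$ means $p^2\Lambda\gtrsim_\rho n$, and this needs the paper's split at $y_+=\sqrt{nN}$:
\begin{itemize}
\item If $y_+\le\sqrt{nN}$, the vertex term gives $d^2/y_+\ge\sqrt n/N^{3/2}$. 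Hence $\Lambda\gtrsim N$ and $p^2\Lambda\gtrsim K^2n$.
\item If $y_+>\sqrt{nN}$, then $p^2\Lambda\gtrsim K^2(ny_+)^{2/3}f^{-1/3}\gtrsim K^2n$, because $y_+^2>nN\ge nf/2$.
\end{itemize}
With these repairs your argument is complete. Note that $n\le p^A$ is then used only in the reservation lemma, not in the logarithmic comparisons.
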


\begin{proof}
We prove a common-palette list analogue of Harris's Theorem~2.1 \cite{Harris}, following its deletion architecture.  Bounded values of $n$ are absorbed by increasing $p_0(\rho,A)$, so assume $n$ is large.  We may
replace $y_+$ by $Y=\min\{y_+,\binom n2\}$.  This remains a local
triangle bound, and \cref{lem:controlMonotonicity} gives
$\Phi_U(n,Y)\le C\Phi_U(n,y)$, so it is enough to prove the estimate
with $Y$.  Reserve two palette blocks $P_D,P_R$ by \cref{lem:reservation}.  Then
$|P_D|,|P_R|\ge p/4$, and every restricted list has size at least
$\rho p/4$ and hence density at least $\rho/4$ in its block.

Fix a degree threshold $d\ge1$.  In the current graph, while some vertex
$v$ has degree greater than $d$, put $S=N(v)$.  The graph $F[S]$ has at
most $Y$ edges.  Suppose that at most $(\rho/8)|P_D|$ colors have already
been retired from $P_D$.  The current deletion list of every vertex then
has density at least $\rho/8$ in $P_D$.  By
\cref{lem:weightedIndependent}, there is an independent set in $S$
sharing one unretired color and having size at least
\[
 c_\rho\frac{d^2}{Y+d}.
\]
Color that set, delete it, and retire the color.  Since the deleted sets
are disjoint, the number $R$ of retired colors is at most
\begin{equation}\label{eq:denseVertexRetired}
 R\le C_\rho\left(\frac{nY}{d^2}+\frac nd\right).
\end{equation}
Since $|P_D|\ge p/4$, the deletion lists retain density at least $\rho/8$ whenever
\[
 p\ge\frac{32C_\rho}{\rho}
 \left(\frac{nY}{d^2}+\frac nd\right).
\]
When deletion stops, the residual graph has maximum degree at most $d$.
Its untouched lists in $P_R$ have size at least $\rho p/4$.  Therefore
\cref{thm:Vu} applies whenever
\[
 \frac{\rho p}{4}\ge
 K_V\frac d{\Log(d^2/Y)},
 \qquad\text{equivalently}\qquad
 p\ge\frac{4K_V}{\rho}\frac d{\Log(d^2/Y)}.
\]
Together with \eqref{eq:denseVertexRetired}, the entire procedure succeeds
whenever
\begin{equation}\label{eq:denseVertexGeneric}
 p\ge C_\rho\left(
 \frac d{\Log(d^2/Y)}+\frac{nY}{d^2}+\frac nd
 \right).
\end{equation}

Put $N=\Log n$.  If $Y\le\sqrt{nN}$, take $d=\sqrt{nN}$.  Then
$\Log(d^2/Y)\ge cN$, and every term in
\eqref{eq:denseVertexGeneric} is
$O(\sqrt{n/N})$.

Now suppose $Y>\sqrt{nN}$, put $f=\Log(n^2/Y)$, and take
$d=(nYf)^{1/3}$.  We have
\[
 \Log(d^2/Y)
 =\Log\bigl((n^2/Y)^{1/3}f^{2/3}\bigr)\ge cf.
\]
Thus the first two terms in \eqref{eq:denseVertexGeneric} are at most
\[
 C\frac{n^{1/3}Y^{1/3}}{f^{2/3}}.
\]
By \cref{lem:logMonotonicity},
\[
 Yf\ge \sqrt{nN}\,\Log\left(\frac{n^2}{\sqrt{nN}}\right)
 \ge c n^{1/2}N^{3/2},
\]
and hence
\[
 \frac nd=\frac{n^{2/3}}{(Yf)^{1/3}}
 \le C\sqrt{\frac nN}.
\]
Choose $K=K_{\mathrm{vert}}(\rho,A)$ to dominate the constants in the two
optimized bounds above, and choose $p_0=p_{\mathrm{vert}}(\rho,A)$ at least
$p_{\mathrm{res}}(\rho,A)$ and large enough for the bounded values of $n$.
Then \eqref{eq:denseVertexGeneric} follows from the displayed hypothesis,
which proves the lemma.
\end{proof}

\begin{lemma}[Dense layered estimate]\label{lem:denseLayered}
For every $\rho,A>0$ there are constants $K_{\rho,A}$ and $p_0=p_0(\rho,A)$ with the following property.  Suppose $p\ge p_0$, $V(F)=A_1\sqcup\cdots\sqcup A_k$, and $F$ has
local triangle bound $y$, $|F|\le p^A$, $1\le d\le p^A$, and
\[
 |N(v)\cap A_j|\le d x^{i-j}
 \qquad(v\in A_i,\ 1\le i\le j\le k)
\]
for $x\ge\sqrt2$.  If the lists are $\rho$-dense in a common palette of
size $p$ and
\[
 p\ge K_{\rho,A}
 \frac{d\left(1+\dfrac{\LogLog(d^2/y_+)}{\log x}\right)}
      {\Log(d^2/y_+)},
\]
then $F$ is list-colorable.
\end{lemma}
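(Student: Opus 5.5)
We follow the layered argument in Harris (the analogue of his layered lemma used for the edge estimate). We color the layers in reverse order $A_k,A_{k-1},\dots,A_1$. The constraint $|N(v)\cap A_j|\le dx^{i-j}$ for $j\ge i$ says that a vertex in $A_i$ has at most $d$ neighbours in its own layer and geometrically fewer in later layers. So the later layers, which are colored first, block only a bounded amount of palette: at most $d\sum_{j>i}x^{i-j}\le d/(x-1)$ colors. This is too many when compared with $d/\Log(d^2/y_+)$, so these neighbours cannot simply be forbidden. The plan is instead to reserve palette blocks and give each layer fresh colors in a controlled way, using \cref{lem:reservation} and \cref{thm:Vu}.

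Concretely, set $f=\Log(d^2/y_+)$ and let $g=\lceil \LogLog(d^2/y_+)/\log x\rceil+1$. Then $x^{-g}\le 1/f$, and $g$ is exactly the factor in the hypothesis. Split the palette by \cref{lem:reservation} into $q=g$ blocks $\cP_0,\dots,\cP_{g-1}$. This is legitimate because $g\le\Log p$, since $d\le p^A$. Layer $A_i$ is colored using block $\cP_{i\bmod g}$. Inside that block, a vertex $v\in A_i$ conflicts with two kinds of already-colored neighbours. The first kind lie in layers $A_j$ with $j>i$ and $j\equiv i\pmod g$; there are at most $d\sum_{\ell\ge1}x^{-\ell g}\le 2d/f$ of them, since $x^{-g}\le1/f$ and $x\ge\sqrt2$. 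The second kind lie in its own layer, which has maximum degree at most $d$ and local triangle bound $y$.

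Each list restricted to its block has size at least $\rho p/(2g)$. After we delete the colors used by the first kind of neighbours, at least $\rho p/(2g)-2d/f$ colors remain. This is at least $\rho p/(4g)$ provided $p\ge C_\rho\, g d/f$, which is the hypothesis. We then color $F[A_i]$ from these residual lists by \cref{thm:Vu}, which needs lists of size $K_V d/\Log(d^2/y_+)=K_Vd/f$. This again reduces to $p\ge C g d/f$. Vertices in layers $j>i$ with $j\not\equiv i \pmod g$ use disjoint blocks, so they cause no conflict. Since $g\asymp 1+\LogLog(d^2/y_+)/\log x$, the total requirement is $p\ge K d(1+\LogLog/\log x)/\Log$. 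The condition $|F|\le p^A$ is used only to make the union bound in \cref{lem:reservation} valid.

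The main obstacle is the regime where $g$ is large, or where $f$ is small (bounded $d^2/y_+$). If $f=O(1)$, we can take $g=1$ and argue greedily: each vertex has at most $d(1+2/x)\le 3d$ relevant neighbours, and $p\ge Kd$ handles this. We also have to make sure that $q=g\le\Log p$, so that \cref{lem:reservation} applies. If $g>\Log p$, the hypothesis forces $p\ge Kd g/f$, which exceeds $d$ by a large factor; then greedy coloring with $x$-geometric tails again works, since each vertex has at most $3d$ neighbours colored earlier and the lists have size $\rho p\gg 3d$. Finally, the constants $K_{\rho,A}$ and $p_0$ are chosen to dominate $K_V/\rho$ and $p_{\mathrm{res}}(\rho,A)$.
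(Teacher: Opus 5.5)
Your proposal is correct and follows essentially the paper's argument. You reserve about $1+\LogLog(d^2/y_+)/\log x$ blocks via \cref{lem:reservation}, assign layers to blocks by residue class, and color in decreasing index, so that already-colored same-block neighbours block only $O(d/f)$ colors; then you apply \cref{thm:Vu} layer by layer. The only difference is that the paper treats $x\ge 2f$ separately, coloring from the full palette without reservation.

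Your closing fallback cases are unnecessary. Your own bound already gives $g\le\Log p$ for all large $p$, and bounded $f$ is covered directly by the truncated Vu bound. Incidentally, the greedy neighbour count in that fallback should be $d\,x/(x-1)\le(2+\sqrt2)\,d$, not $d(1+2/x)\le 3d$.
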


\begin{proof}
The proof follows the layered architecture of Harris's Lemma~2.2 \cite{Harris} in the common-palette list setting.  Put $f=\Log(d^2/y_+)$.  First suppose $x\ge2f$.  Color
$A_k,A_{k-1},\dots,A_1$ in this order.  Before coloring $A_j$, a vertex
of $A_j$ has at most
\[
 d(x^{-1}+x^{-2}+\cdots)\le\frac d f
\]
already colored neighbors.  Let $L_j(v)$ be its list after deleting the
colors used by those neighbors.  Then
\begin{equation}\label{eq:layeredResidualList}
 |L_j(v)|\ge \rho p-\frac d f.
\end{equation}
The graph $F[A_j]$ has maximum degree at most $d$ and local triangle
bound $y$.  Hence the hypothesis of \cref{thm:Vu} is satisfied whenever
\[
 \rho p-\frac d f\ge K_V\frac d f,
 \qquad\text{that is, whenever}\qquad
 p\ge\frac{K_V+1}{\rho}\frac d f.
\]
This proves the first case.

Suppose now that $x<2f$, and put
\[
 q=\left\lceil\frac{\log(2f)}{\log x}\right\rceil.
\]
Since $d\le p^A$ and $y_+\ge1$, we have
$f\le C_A\Log p$, and therefore
$q=O_A(\LogLog p)\le\Log p$ for all sufficiently large $p$.  Group the
layers by their indices modulo $q$ and reserve $q$ disjoint subpalettes
using \cref{lem:reservation}.  Within each residue class, color the layers
in decreasing order of their indices; distinct residue classes use their
reserved disjoint subpalettes.  Fix one residue class and one of its
layers.  The effective ratio inside that class is $x^q\ge2f$, so at most
$d/f$ colors have been used by previously colored neighbors.  The
reservation lemma gives at least $\rho p/(2q)$ available colors before
these deletions.  Thus the remaining restricted list has size at least
\begin{equation}\label{eq:layeredReservedResidualList}
 \frac{\rho p}{2q}-\frac d f.
\end{equation}
It meets Vu's requirement once
\[
 \frac{\rho p}{2q}-\frac d f\ge K_V\frac d f,
 \qquad\text{equivalently}\qquad
 p\ge\frac{2q(K_V+1)}{\rho}\frac d f.
\]
Since
\[
 q\le 1+\frac{\log(2f)}{\log x}
 \le C_A\left(1+\frac{\LogLog(d^2/y_+)}{\log x}\right),
\]
the displayed hypothesis of the lemma implies this inequality after
increasing $K_{\rho,A}$.  Take $K_{\rho,A}$ to dominate the constants in the two cases, and take
$p_0(\rho,A)\ge p_{\mathrm{res}}(\rho,A)$ large enough that
$q\le\Log p$ whenever the second case occurs.  Together with
\eqref{eq:layeredReservedResidualList}, this completes the coloring of the
layers.
\end{proof}

\subsection{Dense triangle-sensitive estimates}

\begin{lemma}[Dense vertex-triangle estimate]\label{lem:denseVertexTriangle}
For every $\rho,A>0$ there are constants $K=K(\rho,A)$ and $p_0=p_0(\rho,A)$ such that the following holds whenever $p\ge p_0$.  Let $F$ have $n\le p^A$ vertices,
$t$ triangles, and local triangle bound $y$.  If the lists have density
at least $\rho$ in a common palette of size $p$ and
\[
 K\left(
 \sqrt{\frac n{\Log n}}
 +t^{1/3}\frac{\LogLog(t^2/y_+^3)}
                    {\Log^{2/3}(t^2/y_+^3)}
 \right)\le p,
\]
then $F$ is list-colorable.
\end{lemma}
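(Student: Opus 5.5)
We mirror the structure of Harris's Theorem~2.3 \cite{Harris}: split by the number of triangles at each vertex into a ``light'' part, handled by \cref{lem:denseVertex}, and a ``heavy'' part, organized into geometric layers and handled by \cref{lem:denseLayered}. The two parts receive disjoint reserved palette blocks from \cref{lem:reservation} with $q=2$. Each block has size at least $p/4$, list density at least $\rho/4$, and inherited exponent $2A$, as in \cref{lem:parameterInheritance}. The constants of the child lemmas with parameters $(\rho/4,2A)$ are already fixed.

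\textbf{Light part.} Write $\tau=t^{1/3}\LogLog(t^2/y_+^3)/\Log^{2/3}(t^2/y_+^3)$; if $t=0$ the light part is everything. Choose a threshold $y_0\le y_+$, say $y_0=t^{2/3}\Lambda$ for a logarithmic factor $\Lambda$ that balances the terms. Let $A_0$ be the vertices lying in at most $y_0$ triangles. Apply \cref{lem:denseVertex} to $F[A_0]$. Its first term is $O(\sqrt{n/\Log n})$. For the second term, we must show
\[
\Phi_U(|A_0|,y_0)\lesssim \sqrt{n/\Log n}+\tau .
\]
Since $|A_0|\le n$, \cref{lem:controlMonotonicity} reduces this to the numerics in Harris's proof.

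\textbf{Heavy part.} Partition the remaining vertices into layers $A_i$ according as $\tri(v)\in(y_0x^{i-1},y_0x^{i}]$, for $i$ up to $O(\Log(y_+/y_0))$. Then $|A_i|\le 3t/(y_0x^{i-1})$, which decreases geometrically. We orient layers so that for $v\in A_i$ and $j\ge i$, the number of neighbours of $v$ in $A_j$ is bounded by an edge count in the link of $v$ together with the size of $A_j$.

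Harris uses the degree bound $|N(v)\cap A_j|\le \min\{|A_j|,\ \ldots\}$. To obtain the form required in \cref{lem:denseLayered}, first delete high-degree vertices into a further reserved block. The deletion uses \cref{lem:weightedIndependent}, retiring colours exactly as in the proof of \cref{lem:denseVertex}. We take $d\asymp \tau$ and $y$-parameter $y_+$.

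The ratio $x$ is chosen as $x=\max\{\sqrt2,\ \Log(t^2/y_+^3)\}^{c}$. With this choice, the factor $1+\LogLog/\log x$ in \cref{lem:denseLayered} is $O(1)$. Alternatively, with $x$ small, that factor becomes exactly the $\LogLog$ numerator. This reproduces Harris's bound $d\,\LogLog/\Log\lesssim\tau$.

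\textbf{Constants.} Finally, take $K$ larger than $4$ times the child constants, so that \eqref{eq:inheritanceParentCondition} holds. Take $p_0$ above $p_{\mathrm{res}}$ and the child thresholds.

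\textbf{Main obstacle.} The hard step is the layered reduction. The layer degree condition $|N(v)\cap A_j|\le dx^{i-j}$ does not hold automatically and must be enforced by a deletion phase. We must check that the number of retired colours, which is of order $\sum_j |A_j|\,Y/d^2$ summed over layers, stays $O(\tau)$, and that the lists keep density $\rho/8$ during deletion. I would first try to copy Harris's Lemma~2.2 reduction verbatim, replacing each greedy step by a common-colour independent set from \cref{lem:weightedIndependent}. The truncated logarithms should then be checked with \cref{lem:logMonotonicity}.
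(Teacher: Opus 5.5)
There is a genuine gap. Your overall architecture matches the paper's: a deletion phase on one reserved block, \cref{lem:denseLayered} on the heavy vertices, and \cref{lem:denseVertex} on the light ones, all on disjoint blocks. But the quantitative choices that make it work are wrong, and the key counting idea for the deletion phase is missing.

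First, the light threshold $y_0=t^{2/3}\Lambda$ is far too high. The claimed bound $\Phi_U(|A_0|,y_0)\lesssim\sqrt{n/\Log n}+\tau$ then fails, and no appeal to \cref{lem:controlMonotonicity} can rescue it. Take $t\approx n^{3/2}$ and $y=t$. Then $\tau=t^{1/3}\approx\sqrt n$, while $\Phi_U(n,n\Lambda)\approx n^{2/3}\Lambda^{1/3}/\Log^{2/3}n$. Your heavy part then has only $3t^{1/3}/\Lambda$ vertices and could be colored trivially, which is a symptom of the problem. The real difficulty is the vertices lying in between roughly $t^{1/3}$ and $t^{2/3}$ triangles, and your threshold places all of them in the light part. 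The paper puts the cut at $2^{h_0}\approx d$, where $Y=\min\{y_+,t\}$, $f=\Log(t^2/Y^3)$ and $d=(ft)^{1/3}+\sqrt{n/\Log n}$. The light part then has local triangle bound below $2d$, and $\Phi_V(n)+\Phi_U(n,2d)=O(\sqrt{n/\Log n}+\tau)$.

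With the cut at $d$, the heavy part may have about $t^{2/3}$ vertices. A vertex-charged budget like your $\sum_j|A_j|Y/d^2$ is then of order $tY/d^3\approx Y/f$, far above $\tau$ when $Y$ is near $t$. The missing idea is to charge retired colors to destroyed \emph{triangles}:
\begin{enumerate}
\item Use dyadic layers $A_i=\{v:\tri(v)\in[2^i,2^{i+1})\}$, recomputed in the current graph after every deletion.
\item Act whenever some $v\in A_i$ has $|N(v)\cap A_j|>2^{(i-j)/2}d$ with $h_0\le i\le j$.
\item The set $S=N(v)\cap A_j$ spans fewer than $2^{i+1}$ edges. So \cref{lem:weightedIndependent} gives a common-color independent set $I\subseteq S$ with $|I|\gtrsim 2^{i-j}d^2/2^{i}=2^{-j}d^2$. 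This step uses $2^i\ge d$, which is exactly why the cut sits at $d$.
\item Each vertex of $I$ lies in at least $2^j$ current triangles, and no triangle contains two of them. So every step destroys $\gtrsim d^2$ current triangles, disjointly across steps.
\item Hence only $O(t/d^2)=O(t^{1/3}/f^{2/3})$ colors are retired.
\end{enumerate}
When no violation remains, the heavy part meets the hypothesis of \cref{lem:denseLayered} with $x=\sqrt2$ and local triangle bound $Y$. It costs $O(d\Log f/f)=O(\tau+\sqrt{n/\Log n})$.

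Two more of your parameter choices break this argument:
\begin{itemize}
\item The layered ratio must be the square root of the ratio between consecutive triangle layers. With your layers of ratio $x$ and target $dx^{i-j}$, a step destroys only $\gtrsim d^2x^{-(j-i+1)}$ triangles, so there is no uniform charge. Even with correct layering, a large $x$ inflates the deletion cost by a factor $x^2$. So only your ``small $x$'' alternative is viable.
\item Taking $d\asymp\tau$ is unbalanced: then $t/d^2\approx t^{1/3}f^{4/3}/\Log^2 f$, which exceeds $\tau$ by a factor of about $f^2/\Log^3 f$. You need $d$ of order $(ft)^{1/3}$ to balance $t/d^2$ against $d\Log f/f$.
\end{itemize}
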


\begin{proof}
We prove a common-palette list analogue of Harris's Theorem~2.3
\cite{Harris}, using separate palettes for its three phases.  Put
\[
 R_{\mathrm{VT}}=\Phi_V(n)+\Phi_T(t,y).
\]
Let $K_{\mathrm{zero}},p_{\mathrm{zero}}$ be the constants obtained from
\cref{lem:denseVertex} with parameters $(\rho,A)$, enlarged by an absolute
factor; this covers $t=0$ because
$\Phi_U(n,0)\le C\Phi_V(n)$.  Assume $t>0$, and replace
$y_+$ by $Y=\min\{y_+,t\}$.  This remains a local triangle bound, while
\cref{lem:controlMonotonicity} gives
$\Phi_T(t,Y)\le C\Phi_T(t,y)$.  Put
\[
 f=\Log(t^2/Y^3),
 \qquad
 N=\Log n,
 \qquad
 T=t^{1/3}\frac{\Log f}{f^{2/3}},
\]
\[
 d=(ft)^{1/3}+\sqrt{\frac nN},
 \qquad
 h_0=\lceil\log_2d\rceil,
 \qquad
 \ell=\lceil\log_2Y\rceil.
\]
Let $K_{\mathrm{lay}},p_{\mathrm{lay}}$ be the constants in
\cref{lem:denseLayered} with parameters $(\rho/4,2A+1)$, and let
$K_{\mathrm{vert}},p_{\mathrm{vert}}$ be the constants in
\cref{lem:denseVertex} with parameters $(\rho/4,2A)$.

Reserve three top-level palette blocks $P_D,P_H,P_L$.  By
\cref{lem:reservation}, the blocks are pairwise disjoint, their restricted
lists have density at least $\rho/4$, each block has order
$\Theta_\rho(p)$, and $n\le |P_*|^{2A}$.

After each deletion, recompute the layers in the current graph.  Thus the
current vertices are partitioned into $A_{-1},A_0,\dots,A_\ell$, where
$A_{-1}$ consists of vertices in no current triangle and $A_i$ consists of
vertices whose current local triangle count lies in $[2^i,2^{i+1})$.
Suppose there are $h_0\le i\le j\le\ell$ and $v\in A_i$ with
\[
 S=N(v)\cap A_j,
 \qquad
 |S|>2^{(i-j)/2}d.
\]
The graph on $S$ has at most $2^{i+1}$ edges.  A color used from $P_D$ is
retired for the remainder of the deletion phase.  As long as fewer than
$(\rho/32)|P_D|$ colors have been retired, every current deletion list has
density at least $\rho/8$ in $P_D$.  Applying
\cref{lem:weightedIndependent} gives an independent set $I\subseteq S$
sharing one unretired color and satisfying
\[
 |I|\ge c_\rho\frac{|S|^2}{2^{i+2}+|S|}.
\]
The function $x\mapsto x^2/(2^{i+2}+x)$ is increasing on $x>0$, since
its derivative is
\[
 \frac{x(2^{i+3}+x)}{(2^{i+2}+x)^2}>0.
\]
Put $x_0=2^{(i-j)/2}d$.  Since $|S|>x_0$ and $x_0\le d$, while
$2^i\ge d$, we obtain
\begin{equation}\label{eq:denseTriangleIndependent}
 |I|\ge c_\rho
 \frac{2^{i-j}d^2}{2^{i+2}+d}
 \ge c_\rho'2^{-j}d^2.
\end{equation}
Every vertex of $I$ lies in at least $2^j$ current triangles, and no
current triangle contains two vertices of $I$.  Deleting $I$ therefore
removes at least $c_\rho'd^2$ current triangles.  Since every such triangle
contains a vertex deleted at that step, the triangles removed at different
steps are disjoint.  The number of retired colors in this phase is at most
\begin{equation}\label{eq:denseTriangleRetired}
 C_\rho\frac t{d^2}.
\end{equation}
As $|P_D|\ge p/6$, the deletion phase is valid whenever
\begin{equation}\label{eq:VTDeletionRequirement}
 p\ge\frac{192C_\rho}{\rho}\frac t{d^2}.
\end{equation}
The choice of $d$ gives
\begin{equation}\label{eq:VTDeletionControl}
 \frac t{d^2}\le\frac{t^{1/3}}{f^{2/3}}\le T\le R_{\mathrm{VT}}.
\end{equation}

When no violating pair remains, let $F^\circ$ be the current graph and let
$F_H$ be the subgraph induced by
$\bigcup_{i=h_0}^{\ell}A_i$, with this union interpreted as empty when
$h_0>\ell$.  The chosen $d$ satisfies
$d\le Cn\Log^{1/3}n$, because $t\le n^3$ and $f\le C\Log n$.
Since $n\le |P_H|^{2A}$, we have $d\le |P_H|^{2A+1}$ for all
sufficiently large $p$.  Define
\[
 R_H=\frac{d\left(1+\dfrac{\LogLog(d^2/Y)}{\log\sqrt2}\right)}
           {\Log(d^2/Y)}.
\]
Put $s_H=\Log(d^2/Y)$.  Since
\[
 s_H\ge c\Log\bigl((t^2/Y^3)^{1/3}f^{2/3}\bigr)\ge cf,
\]
the function $g(s)=(1+\Log s)/s$ is nonincreasing for $s\ge1$: on
$[1,e]$ it equals $2/s$, and for $s\ge e$ its derivative is
$-(\log s)/s^2$.  Since $g$ is nonincreasing,
\[
 g(s_H)\le g(\max\{1,cf\})
 \le Cg(f)
 \le C\frac{\Log f}{f},
\]
where the middle inequality absorbs the bounded range of $f$ and the
fixed dilation by $c$.
Consequently
\begin{align}
 R_H
 &\le C d\frac{\Log f}{f}\notag\\
 &\le C\left(
       t^{1/3}\frac{\Log f}{f^{2/3}}
       +\sqrt{\frac nN}\frac{\Log f}{f}
       \right)\notag\\
 &\le C\left(T+\sqrt{\frac nN}\right)
 \le B_{\mathrm{VT,L}}R_{\mathrm{VT}}.
 \label{eq:VTLayeredControl}
\end{align}

Let $F_L$ be the subgraph induced by
\[
 A_{-1}\cup\bigcup_{i=0}^{\min\{h_0-1,\ell\}}A_i.
\]
Then
\[
 V(F^\circ)=V(F_H)\sqcup V(F_L).
\]
The graph $F_L$ has at most $n$ vertices and local triangle bound less
than $2^{h_0}<2d$.  By \cref{lem:controlMonotonicity},
\begin{align*}
 R_L
 &: =\Phi_V(|F_L|)+\Phi_U(|F_L|,2d)\\
 &\le C\left(
       \sqrt{\frac nN}
       +\frac{(nd)^{1/3}}{\Log^{2/3}(n^2/d)}
       \right).
\end{align*}
Since $d\le CnN^{1/3}$, we have $\Log(n^2/d)\ge cN$, and hence
\begin{equation}\label{eq:lowTriangleAlgebra}
 \frac{(nd)^{1/3}}{\Log^{2/3}(n^2/d)}
 \le C\left(
 \frac{\sqrt n}{N^{5/6}}
 +\frac{n^{1/3}t^{1/9}f^{1/9}}{N^{2/3}}
 \right).
\end{equation}
The first term is at most $\sqrt{n/N}$.  If
$t\le n^{3/2}N^{1/2}$, the second term is also at most
$C\sqrt{n/N}$, using $f\le CN$.  If
$t>n^{3/2}N^{1/2}$, then
\[
 \frac{n^{1/3}t^{1/9}f^{1/9}}{N^{2/3}}
 =\frac{t^{1/3}}{f^{2/3}}
   \frac{f^{7/9}n^{1/3}}{t^{2/9}N^{2/3}}
 \le C\frac{t^{1/3}}{f^{2/3}}
 \le CT.
\]
Thus
\begin{equation}\label{eq:VTVertexControl}
 R_L\le C\left(T+\sqrt{\frac nN}\right)
 \le B_{\mathrm{VT,V}}R_{\mathrm{VT}}.
\end{equation}

Choose
\[
 K_{\mathrm{VT}}
 \ge\max\left\{
   K_{\mathrm{zero}},
   \frac{192C_\rho}{\rho},
   6B_{\mathrm{VT,L}}K_{\mathrm{lay}},
   6B_{\mathrm{VT,V}}K_{\mathrm{vert}}
 \right\},
\]
and choose
\[
 p_{\mathrm{VT}}
 \ge\max\left\{
 p_{\mathrm{zero}},
 p_{\mathrm{res}}(\rho,A),
 6p_{\mathrm{lay}},
 6p_{\mathrm{vert}}
 \right\}
\]
large enough also for $d\le |P_H|^{2A+1}$ and the bounded parameter
ranges above.  If $p\ge p_{\mathrm{VT}}$ and
$p\ge K_{\mathrm{VT}}R_{\mathrm{VT}}$, then
\eqref{eq:VTDeletionRequirement} holds, while
\cref{lem:parameterInheritance} with $q_0=3$ and
\eqref{eq:VTLayeredControl}--\eqref{eq:VTVertexControl} verifies the
hypotheses of \cref{lem:denseLayered,lem:denseVertex} on $P_H$ and $P_L$.
These two colorings, together with the deletion coloring in $P_D$, color
$F$ because the three palette blocks are disjoint.  Taking
$K=K_{\mathrm{VT}}$ and $p_0=p_{\mathrm{VT}}$ proves the lemma.
\end{proof}

\begin{lemma}[Dense edge-local estimate]\label{lem:denseEdgeLocal}
For every $\rho,A>0$ there are constants $K=K(\rho,A)$ and $p_0=p_0(\rho,A)$ such that the following holds whenever $p\ge p_0$.  Let $F$ have at most $p^A$
nonisolated vertices, $m$ edges, and local triangle bound $y$.  If the
lists have density at least $\rho$ in a common palette of size $p$ and
\[
 K\left(
 \frac{m^{1/3}}{\Log^{2/3}m}
 +\frac{m^{1/4}y_+^{1/4}}{\Log^{3/4}(m/y_+)}
 \right)\le p,
\]
then $F$ is list-colorable.
\end{lemma}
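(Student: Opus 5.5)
We follow Harris's edge-local argument (the analogue of his Lemma~3.1 in \cite{Harris}) in the common-palette setting. The plan is to split $F$ by degree into a high-degree part, which has few vertices and is handled by \cref{lem:denseVertex}, and a low-degree part, which is handled by \cref{thm:Vu}. The two parts receive disjoint palette blocks from \cref{lem:reservation}.

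Isolated vertices are trivially colorable, so we discard them; then $n\le p^A$ and $n\le 2m$. Bounded $m$ is absorbed into $p_0$. As in the earlier proofs, we replace $y_+$ by $Y=\min\{y_+,m\}$. This is still a local triangle bound, and \cref{lem:controlMonotonicity} shows that $\Phi_W(m,Y)\le C\Phi_W(m,y)$. Put
\[
R_{\mathrm{EL}}=\Phi_E(m)+\Phi_W(m,Y).
\]
Reserve two blocks $P_H,P_L$ by \cref{lem:reservation}; each has size at least $p/4$, list density at least $\rho/4$, and $n\le|P_*|^{2A}$.

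Fix a degree threshold $D$. Let $F_H$ be induced by the vertices of degree greater than $D$, so $|F_H|\le 2m/D$, and let $F_L$ be induced by the rest. The graph $F_L$ has maximum degree at most $D$ and local triangle bound $Y$. \Cref{thm:Vu} colors $F_L$ from $P_L$ provided
\[
p\ge C_\rho\,\frac{D}{\Log(D^2/Y)}.
\]
On $F_H$ we apply \cref{lem:denseVertex} with parameters $(\rho/4,2A)$, transferred to $P_H$ by \cref{lem:parameterInheritance}. This requires
\[
p\ge C\Bigl(\Phi_V(2m/D)+\Phi_U(2m/D,Y)\Bigr).
\]

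We choose $D$ in two regimes.
\begin{itemize}
\item If $Y\le m^{1/3}\Log^{4/3}m$, take $D=(m\Log m)^{1/3}$. Then $D^2/Y\ge m^{1/3}$ up to logarithmic factors, so $\Log(D^2/Y)\ge c\Log m$. Hence $D/\Log(D^2/Y)\le C\Phi_E(m)$ and $\Phi_V(m/D)\le C\Phi_E(m)$. In the same regime $\Phi_U(m/D,Y)\lesssim (m^{2/3}Y)^{1/3}/\Log^{2/3}(\cdot)$, which is again $O(\Phi_E(m))$ after the logarithms are compared.
\item If $Y$ is larger, take $D=(mY)^{1/4}\Log^{1/4}(m/Y)$. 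This balances $D/\Log(D^2/Y)$ against $\Phi_U(m/D,Y)\approx (mY^{2}/D)^{1/3}/\Log^{2/3}$, since $(m/D)^2/Y$ and $D^2/Y$ are both polynomially comparable to $m/Y$. Both terms then become $O(\Phi_W(m,Y))$. The term $\Phi_V(m/D)$ is $O(\Phi_E(m))$ because $D\ge(m\Log m)^{1/3}$ in this regime, which uses \cref{lem:logMonotonicity}.
\end{itemize}

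Finally, choose $K$ to dominate $4C_\rho$ and the inherited constant $2q_0BK_{\mathrm{vert}}(\rho/4,2A)$ with $q_0=2$. Choose $p_0$ at least $p_{\mathrm{res}}$ and $4p_{\mathrm{vert}}$. The two colorings use disjoint blocks, so together they color $F$.

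The main obstacle is the logarithm bookkeeping in the choice of $D$. We must show that $\Log(D^2/Y)$ and $\Log\bigl((m/D)^2/Y\bigr)$ are both at least a constant multiple of $\Log(m/Y)$, respectively $\Log m$, uniformly across the boundary between the two regimes. We also need $\Log(n^2/Y)$ in \cref{lem:denseVertex} to be evaluated at $n=|F_H|$ rather than at $m$. We handle these by the truncated-log conventions, \cref{lem:logMonotonicity}, and \cref{lem:controlMonotonicity}, which absorb bounded ranges and fixed dilations. If the crossover regime proves delicate, we may instead take $D$ to be the maximum of the two choices.
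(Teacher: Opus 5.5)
Your route is the paper's. You discard isolated vertices, cap $Y=\min\{y_+,m\}$, split at a degree threshold $D$, and color the high-degree part by \cref{lem:denseVertex} with parameters $(\rho/4,2A)$ and the rest by \cref{thm:Vu} from disjoint reserved blocks. The two choices $D=(m\Log m)^{1/3}$ and $D=(mYf)^{1/4}$ with $f=\Log(m/Y)$ are also the paper's. The gap is where you switch between them.

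Put $L=\Log m$. With $D=(mL)^{1/3}$, the only control on the high-degree part is $|F_H|\le 2m/D\asymp m^{2/3}L^{-1/3}$; your display drops this factor $L^{-1/3}$. So the vertex-local requirement is $p\gtrsim\Phi_U(m/D,Y)$. On the window $(mL)^{1/3}\le Y\le m^{1/3}L^{4/3}$ one has $\Log((m/D)^2/Y)\asymp f\asymp L$, and therefore
\[
 \frac{\Phi_U(m/D,Y)}{\Phi_E(m)}\asymp\Bigl(\frac{Y}{(mL)^{1/3}}\Bigr)^{1/3},
 \qquad
 \frac{\Phi_U(m/D,Y)}{\Phi_W(m,Y)}\asymp\Bigl(\frac{Y}{(mL)^{1/3}}\Bigr)^{1/12}.
\]
Hence your claim that $\Phi_U(m/D,Y)=O(\Phi_E(m))$ throughout your first regime fails once $Y\gg(mL)^{1/3}$. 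In that range the hypothesis $p\ge KR_{\mathrm{EL}}$ does not imply the hypothesis needed for $F_H$. For example, at $Y=m^{1/3}L^{4/3}$ you need $p\gtrsim m^{1/3}L^{-1/3}$, while $R_{\mathrm{EL}}\asymp m^{1/3}L^{-5/12}$.

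The repair is to switch regimes at $Y=(mL)^{1/3}$, exactly where the paper does. Your fallback of taking the larger of the two values of $D$ gives the same choice up to constants, since $(mYf)^{1/4}\le(mL)^{1/3}$ whenever $Y\le(mL)^{1/3}$. In the second regime, \cref{lem:logMonotonicity} applied to $x\Log(m/x)$ gives
\[
 cm^{1/3}L^{4/3}\le Yf\le Cm,
 \qquad\text{hence}\qquad
 c(mL)^{1/3}\le D\le C\sqrt m .
\]
Both bounds are needed for $\Phi_V(m/D)=O(\Phi_E(m))$. The upper bound is what gives $\Log(m/D)\ge cL$, and you did not mention it.

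One smaller correction: $\Phi_U(m/D,Y)$ equals $(mY/D)^{1/3}/\Log^{2/3}((m/D)^2/Y)$, not $(mY^2/D)^{1/3}/\Log^{2/3}(\cdot)$. With the correct expression, $D=(mYf)^{1/4}$ and $\Log((m/D)^2/Y)\ge cf$ give the bound $C(mY)^{1/4}f^{-3/4}$ that you claim.
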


\begin{proof}
We prove a common-palette list analogue of Harris's Theorem~3.1 \cite{Harris}.  Put
\[
 R_{\mathrm{EL}}=\Phi_E(m)+\Phi_W(m,y).
\]
Let $K_{\mathrm{vert}},p_{\mathrm{vert}}$ be the constants in
\cref{lem:denseVertex} with parameters $(\rho/4,2A)$.
First discard the isolated vertices; after the nonisolated part has been
colored, each isolated vertex may be assigned any color from its nonempty
list.  We may therefore assume that $|F|\le p^A$.  The case $m=0$ is then
trivial.  Replace $y_+$ by
$Y=\min\{y_+,m\}$.  This remains a local triangle bound, and
$\Phi_W(m,Y)\le C\Phi_W(m,y)$ by
\cref{lem:controlMonotonicity}.  Reserve two palette blocks $P_{\mathrm{hi}},P_{\mathrm{lo}}$.  For a
parameter $d\ge1$, let $V_{\mathrm{hi}}$ be the vertices of degree greater
than $d$, and put $V_{\mathrm{lo}}=V(F)\setminus V_{\mathrm{hi}}$.  Then
$|V_{\mathrm{hi}}|\le2m/d$.  Write $n_{\mathrm{hi}}=|V_{\mathrm{hi}}|$
and let $y_{\mathrm{hi}}$ be the local triangle bound of
$F[V_{\mathrm{hi}}]$.  Thus
\[
 n_{\mathrm{hi}}\le2m/d,
 \qquad
 y_{\mathrm{hi}}\le Y.
\]
By \cref{lem:controlMonotonicity},
\[
 \Phi_V(n_{\mathrm{hi}})+\Phi_U(n_{\mathrm{hi}},y_{\mathrm{hi}})
 \le C\bigl(\Phi_V(m/d)+\Phi_U(m/d,Y)\bigr).
\]
Put
\[
 R_{\mathrm{hi}}
 =\Phi_V(n_{\mathrm{hi}})+\Phi_U(n_{\mathrm{hi}},y_{\mathrm{hi}}).
\]
The high-degree subgraph can be colored from $P_{\mathrm{hi}}$ provided
\[
 p\ge4K_{\mathrm{vert}}R_{\mathrm{hi}},
 \qquad
 p\ge4p_{\mathrm{vert}}.
\]
The graph $F[V_{\mathrm{lo}}]$ has maximum degree at most $d$, and its
restricted lists in $P_{\mathrm{lo}}$ have size at least $\rho p/4$.
Thus \cref{thm:Vu} applies to $F[V_{\mathrm{lo}}]$ whenever
\[
 p\ge\frac{4K_V}{\rho}\frac d{\Log(d^2/Y)}.
\]
The analytic quantities to be controlled are therefore
\begin{equation}\label{eq:denseEdgeLocalGeneric}
 R_{\mathrm{hi}}
 \quad\text{and}\quad
 \frac d{\Log(d^2/Y)}.
\end{equation}

Put $L=\Log m$.  If $Y\le(mL)^{1/3}$, take
$d=(mL)^{1/3}$.  Then
\[
 \Log(m/d),\quad
 \Log((m/d)^2/Y),\quad
 \Log(d^2/Y)
 \ge cL.
\]
Consequently
\[
 R_{\mathrm{hi}}
 \le C\frac{m^{1/3}}{L^{2/3}},
 \qquad
 \frac d{\Log(d^2/Y)}
 \le C\frac{m^{1/3}}{L^{2/3}}.
\]

Suppose $Y>(mL)^{1/3}$.  Put $f=\Log(m/Y)$ and
$d=(mYf)^{1/4}$.  By \cref{lem:logMonotonicity},
\[
 Yf\ge c m^{1/3}L^{4/3},
 \qquad
 Yf\le Cm.
\]
Thus
\[
 c m^{1/3}L^{1/3}\le d\le C\sqrt m,
\]
so $\Log(m/d)\ge cL$.  Directly from the definition of $d$,
\[
 \Log(d^2/Y)\ge cf,
 \qquad
 \Log((m/d)^2/Y)\ge cf.
\]
The second term in $R_{\mathrm{hi}}$ and the low-degree quantity in
\eqref{eq:denseEdgeLocalGeneric} are consequently both at most
\[
 C\frac{m^{1/4}Y^{1/4}}{f^{3/4}},
\]
while the first term in $R_{\mathrm{hi}}$ is at most
$Cm^{1/3}/L^{2/3}$.  Since
$\Phi_W(m,Y)\le C\Phi_W(m,y)$, both ranges give
\begin{equation}\label{eq:ELVertexControl}
 R_{\mathrm{hi}}\le B_{\mathrm{EL,V}}R_{\mathrm{EL}},
 \qquad
 \frac d{\Log(d^2/Y)}\le B'_{\mathrm{EL}}R_{\mathrm{EL}}.
\end{equation}
Choose
\[
 K_{\mathrm{EL}}
 \ge4\max\left\{
 B_{\mathrm{EL,V}}K_{\mathrm{vert}},
 \frac{B'_{\mathrm{EL}}K_V}{\rho}
 \right\},
\]
and choose
$ p_{\mathrm{EL}}\ge\max\{p_{\mathrm{res}}(\rho,A),4p_{\mathrm{vert}}\}$
large enough for the bounded parameter ranges.  Then
$p\ge K_{\mathrm{EL}}R_{\mathrm{EL}}$ and
$p\ge p_{\mathrm{EL}}$ meet the two coloring conditions above, so the
disjoint palettes $P_{\mathrm{hi}}$ and $P_{\mathrm{lo}}$ color $F$.
Taking $K=K_{\mathrm{EL}}$ and $p_0=p_{\mathrm{EL}}$ proves the lemma.
\end{proof}

\begin{lemma}[Dense edge-triangle estimate]\label{lem:denseEdgeTriangle}
For every $\rho,A>0$ there are constants $K=K(\rho,A)$ and $p_0=p_0(\rho,A)$ such that the following holds whenever $p\ge p_0$.  Let $F$ have at most $p^A$
nonisolated vertices, $m$ edges, $t$ triangles, and local triangle bound
$y$.  If the lists have density at least $\rho$ in a common palette of
size $p$ and
\[
 K\left(
 \frac{m^{1/3}}{\Log^{2/3}m}
 +t^{1/3}\frac{\LogLog(t^2/y_+^3)}
                    {\Log^{2/3}(t^2/y_+^3)}
 \right)\le p,
\]
then $F$ is list-colorable.
\end{lemma}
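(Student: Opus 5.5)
We follow the architecture of Harris's Theorem~3.2, which derives the edge--triangle bound from the vertex--triangle and edge-local bounds, using the same reservation and inheritance mechanism as before.  Put
\[
 R_{\mathrm{ET}}=\Phi_E(m)+\Phi_T(t,y).
\]
The case $t=0$ follows from \cref{lem:denseEdgeLocal} with $y=0$, because $\Phi_W(m,0)\le C\Phi_E(m)$.  So assume $t>0$, discard isolated vertices, and replace $y_+$ by $Y=\min\{y_+,t\}$; \cref{lem:controlMonotonicity} shows this costs only a constant factor.  Set $f=\Log(t^2/Y^3)$ and $L=\Log m$.  Reserve three blocks $P_D,P_{\mathrm{hi}},P_{\mathrm{lo}}$ by \cref{lem:reservation}, each with inherited density $\rho/4$ and exponent $2A$.

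\textbf{Deletion phase.}  The first phase removes high-triangle structure.  We repeat the layered deletion from the proof of \cref{lem:denseVertexTriangle} verbatim, with the threshold $d=(ft)^{1/3}+m^{1/3}/L^{2/3}$ in place of $(ft)^{1/3}+\sqrt{n/N}$.  Layers $A_i$ record the current local triangle count in $[2^i,2^{i+1})$.  Each application of \cref{lem:weightedIndependent} retires one color of $P_D$ and destroys at least $c_\rho d^2$ disjoint triangles.  Hence at most $C_\rho t/d^2\le C t^{1/3}/f^{2/3}\le CR_{\mathrm{ET}}$ colors are retired, and the phase is valid once $p\ge KR_{\mathrm{ET}}$.

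\textbf{High-triangle and low-triangle parts.}  After deletion, the vertices with local triangle count at least $d$ form a graph satisfying the layered hypothesis with $x=\sqrt2$.  Since its vertex count is polynomial in $p$, \cref{lem:denseLayered} colors it from $P_{\mathrm{hi}}$ with cost $Cd\,\Log f/f$.  By the computation in \eqref{eq:VTLayeredControl}, this is at most $C(T+m^{1/3}/L^{2/3})$, and \cref{lem:controlMonotonicity} then bounds it by $CR_{\mathrm{ET}}$.  The remaining vertices have local triangle bound $2d$ and at most $m$ edges, so \cref{lem:denseEdgeLocal} colors them from $P_{\mathrm{lo}}$ provided
\[
 \Phi_E(m)+\Phi_W(m,2d)\le CR_{\mathrm{ET}}.
\]

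\textbf{Main obstacle.}  The main obstacle is this last inequality, namely bounding
\[
 \frac{m^{1/4}d^{1/4}}{\Log^{3/4}(m/d)}.
\]
We split on the size of $d$.

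\begin{itemize}[label=--]
\item If $d\le m^{1/3}L^{1/3}$, the term is at most $Cm^{1/3}/L^{2/3}$.  To check this, note that $d\le Cm^{1/3}L^{1/3}$ gives $\Log(m/d)\ge cL$.
\item Otherwise $(ft)^{1/3}$ dominates $d$, so $d\approx(ft)^{1/3}$.  We must then show
\[
 m^{1/4}(ft)^{1/12}/L^{3/4}\lesssim t^{1/3}\Log f/f^{2/3}+m^{1/3}/L^{2/3}.
\]
This is a balancing computation in the regimes $t\gtrless mL^{-2}$, using $t\le Cm^{3/2}$ and $f\le CL$.  It is analogous to \eqref{eq:lowTriangleAlgebra}.
\end{itemize}

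If the two-term bound fails in a boundary range, the fallback is to add a third degree split.  Vertices of degree above $d'$ would be colored by \cref{lem:denseVertexTriangle}, since there are at most $2m/d'$ of them, and the remaining maximum-degree-$d'$ part by \cref{thm:Vu}.

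\textbf{Choice of constants.}  Finally we choose $K$ to dominate the child constants multiplied by the control factors, as permitted by \cref{lem:parameterInheritance} with $q_0=3$, and choose $p_0$ to cover the reservation thresholds and the bounded parameter ranges.
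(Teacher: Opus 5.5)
Your proposal is correct, but it is organized differently from the paper's proof. The paper does no deletion at this level. It splits $V(F)$ at the triangle-count threshold $z=\max\{1,t^{2/3}L^{1/3}/m^{1/3}\}$. The high part $U$, which has at most $3t/z$ vertices, goes to \cref{lem:denseVertexTriangle} as a black box, and the low part goes to \cref{lem:denseEdgeLocal} with local bound $z$. Both comparisons then essentially reduce to $\Phi_V(t/z),\Phi_W(m,z)\le Cm^{1/6}t^{1/6}/L^{2/3}$ plus AM--GM.

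You instead open up the vertex--triangle proof: you replace $\sqrt{n/N}$ by $\Phi_E(m)$ in the threshold $d$, and swap its vertex-local child for the edge-local one. This works because the layered estimate does not charge for the number of vertices, so you never need a bound like $|U|\le 3t/z$.

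The paper's route is shorter and keeps the dependency chain $\{\mathrm{VT},\mathrm{EL}\}\Rightarrow\mathrm{ET}$. Yours bypasses VT, at the cost of rerunning the deletion phase with three palette blocks. Note that your layered call should use exponent $2A+1$ rather than $2A$, since $d$ can be of order $n\Log^{1/3}n$.

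Your ``main obstacle'' needs neither a regime split nor the fallback degree split. Since $t\le Cm^{3/2}$ and $f\le CL$, you have $d\le Cm^{1/2}L^{1/3}$, hence $\Log(m/2d)\ge cL$ in all ranges. Then weighted AM--GM gives
\[
 \Phi_W(m,2d)\le C\,\frac{m^{1/4}\bigl((Lt)^{1/12}+m^{1/12}\bigr)}{L^{3/4}}
 \le C\,\frac{m^{1/3}+t^{1/3}}{L^{2/3}},
\]
and finally $t^{1/3}/L^{2/3}\le Ct^{1/3}/f^{2/3}\le C\Phi_T(t,Y)$.
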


\begin{proof}
We prove a common-palette list analogue of Harris's Theorem~3.2 \cite{Harris} from the preceding dense estimates.  Put
\[
 R_{\mathrm{ET}}=\Phi_E(m)+\Phi_T(t,y).
\]
Let $K_{\mathrm{VT}},p_{\mathrm{VT}}$ and
$K_{\mathrm{EL}},p_{\mathrm{EL}}$ be the constants supplied by
\cref{lem:denseVertexTriangle,lem:denseEdgeLocal}, respectively, with
parameters $(\rho/4,2A)$.  Let
$K_{\mathrm{zero}},p_{\mathrm{zero}}$ be the constants obtained from
\cref{lem:denseEdgeLocal} with parameters $(\rho,A)$, enlarged by an
absolute factor; this covers $t=0$ because
$\Phi_W(m,0)\le C\Phi_E(m)$.
Discard isolated vertices at the outset and color them after the
nonisolated part from their nonempty lists; hence $|F|\le p^A$ throughout
the palette argument.  Assume $t>0$, and replace
$y_+$ by
$Y=\min\{y_+,t\}$.  This remains a local triangle bound, and
$\Phi_T(t,Y)\le C\Phi_T(t,y)$ by
\cref{lem:controlMonotonicity}.  Put
\[
 L=\Log m,
 \qquad
 f=\Log(t^2/Y^3),
 \qquad
 z_0=t^{2/3}\frac{L^{1/3}}{m^{1/3}},
 \qquad
 z=\max\{1,z_0\}.
\]
Let $U$ be the vertices lying in at least $z$ triangles and put
$W=V(F)\setminus U$.  Let $n_U,t_U,y_U$ be the order, triangle count,
and local triangle bound of $F[U]$, and let $m_W,y_W$ be the edge count
and local triangle bound of $F[W]$.  Since the sum of the local triangle
counts is $3t$,
\[
 n_U\le3t/z,
 \qquad t_U\le t,
 \qquad y_U\le Y,
 \qquad m_W\le m,
 \qquad y_W\le z.
\]
Consequently \cref{lem:controlMonotonicity} gives
\begin{align*}
 \Phi_V(n_U)&\le C\Phi_V(t/z),
 &\Phi_T(t_U,y_U)&\le C\Phi_T(t,Y),\\
 \Phi_E(m_W)&\le C\Phi_E(m),
 &\Phi_W(m_W,y_W)&\le C\Phi_W(m,z).
\end{align*}
Reserve two top-level palette blocks $P_U,P_W$.  Put
\[
 R_U=\Phi_V(n_U)+\Phi_T(t_U,y_U),
 \qquad
 R_W=\Phi_E(m_W)+\Phi_W(m_W,y_W).
\]
The two child estimates apply from the reserved palettes provided
\begin{equation}\label{eq:denseEdgeTriangleChildConditions}
 p\ge4K_{\mathrm{VT}}R_U,
 \qquad
 p\ge4K_{\mathrm{EL}}R_W,
 \qquad
 p\ge4\max\{p_{\mathrm{VT}},p_{\mathrm{EL}}\}.
\end{equation}
The remaining work is to compare $R_U$ and $R_W$ with
$R_{\mathrm{ET}}$.

Every graph with $m$ edges has $t\le C m^{3/2}$.  Indeed, if
$\lambda_1,\dots,\lambda_n$ are the adjacency eigenvalues and
$\mathsf A$ is the adjacency matrix of $F$, then
\[
 6t=\operatorname{tr}(\mathsf A^3)
 \le\sum_i|\lambda_i|^3
 \le\left(\sum_i\lambda_i^2\right)^{3/2}
 =(2m)^{3/2}.
\]
If $z=1$, then $t^2L\le m$.  Hence
\[
 \Phi_V(t)
 =\sqrt{\frac{t}{\Log t}}
 \le\sqrt t
 \le\frac{m^{1/4}}{L^{1/4}}
 \le C\frac{m^{1/3}}{L^{2/3}},
\]
and
\[
 \Phi_W(m,1)=\frac{m^{1/4}}{L^{3/4}}
 \le C\frac{m^{1/3}}{L^{2/3}}.
\]
Together with the preliminary monotonicity comparisons, this gives the
required bounds for $R_U$ and $R_W$ in the range $z=1$.

Suppose $z=z_0\ge1$.  From $t^2L\ge m$ and $t\le Cm^{3/2}$,
\[
 \Log(t/z)\ge cL,
 \qquad
 \Log(m/z)\ge cL.
\]
Therefore
\begin{align*}
 \Phi_V(t/z)
 &\le C\frac{m^{1/6}t^{1/6}}{L^{2/3}},\\
 \Phi_W(m,z)
 &\le C\frac{m^{1/6}t^{1/6}}{L^{2/3}}.
\end{align*}
By the arithmetic--geometric mean inequality,
\[
 \frac{m^{1/6}t^{1/6}}{L^{2/3}}
 \le C\left(
 \frac{m^{1/3}}{L^{2/3}}
 +\frac{t^{1/3}}{L^{2/3}}
 \right).
\]
Finally $f\le CL$, so
\[
 \frac{t^{1/3}}{L^{2/3}}
 \le C\frac{t^{1/3}}{f^{2/3}}
 \le C t^{1/3}\frac{\Log f}{f^{2/3}}.
\]
Thus in both ranges
\begin{align}
 R_U&\le B_{\mathrm{ET,VT}}R_{\mathrm{ET}},\label{eq:ETVTControl}\\
 R_W&\le B_{\mathrm{ET,EL}}R_{\mathrm{ET}}.\label{eq:ETELControl}
\end{align}
Choose
\[
 K_{\mathrm{ET}}
 \ge\max\left\{
 K_{\mathrm{zero}},
 4B_{\mathrm{ET,VT}}K_{\mathrm{VT}},
 4B_{\mathrm{ET,EL}}K_{\mathrm{EL}}
 \right\},
\]
and choose
\[
 p_{\mathrm{ET}}
 \ge\max\left\{
 p_{\mathrm{zero}},
 p_{\mathrm{res}}(\rho,A),
 4p_{\mathrm{VT}},
 4p_{\mathrm{EL}}
 \right\}
\]
large enough for the bounded parameter ranges.  Then
$p\ge K_{\mathrm{ET}}R_{\mathrm{ET}}$ and $p\ge p_{\mathrm{ET}}$
imply \eqref{eq:denseEdgeTriangleChildConditions}, and the two disjoint
palette blocks color $F$.  Taking $K=K_{\mathrm{ET}}$ and
$p_0=p_{\mathrm{ET}}$ proves the lemma.
\end{proof}

The finite induction used in the preceding proofs is summarized below.
The control constants in the last column are supplied by the indicated
estimates.

\begin{table}[htbp]
\centering
\small
\setlength{\tabcolsep}{5pt}
\renewcommand{\arraystretch}{1.2}
\caption{Recursive calls in the dense common-palette estimates.}
\label{tab:recursiveCalls}
\begin{tabular}{@{}c c c p{0.42\textwidth}@{}}
\hline
call & child palette & child parameters & control comparison \\
\hline
$\mathrm{VT}\to\mathrm L$
 & $\ge p/6$ & $(\rho/4,2A+1)$
 & $R_H\le B_{\mathrm{VT,L}}R_{\mathrm{VT}}$
   by \eqref{eq:VTLayeredControl} \\
$\mathrm{VT}\to\mathrm V$
 & $\ge p/6$ & $(\rho/4,2A)$
 & $R_L\le B_{\mathrm{VT,V}}R_{\mathrm{VT}}$
   by \eqref{eq:VTVertexControl} \\
$\mathrm{EL}\to\mathrm V$
 & $\ge p/4$ & $(\rho/4,2A)$
 & $R_{\mathrm{hi}}\le B_{\mathrm{EL,V}}R_{\mathrm{EL}}$
   by \eqref{eq:ELVertexControl} \\
$\mathrm{ET}\to\mathrm{VT}$
 & $\ge p/4$ & $(\rho/4,2A)$
 & $R_U\le B_{\mathrm{ET,VT}}R_{\mathrm{ET}}$
   by \eqref{eq:ETVTControl} \\
$\mathrm{ET}\to\mathrm{EL}$
 & $\ge p/4$ & $(\rho/4,2A)$
 & $R_W\le B_{\mathrm{ET,EL}}R_{\mathrm{ET}}$
   by \eqref{eq:ETELControl} \\
\hline
\end{tabular}
\end{table}

\begin{proof}[Proof of \cref{thm:densePalette}]
Let $K,p_0$ be the constants supplied by
\cref{lem:denseEdgeTriangle} for the parameters $(\rho,A)$, enlarged by
an absolute factor if necessary.  Apply that lemma with local triangle
bound $y=t$.
If $t=0$, its second term is zero.  If $t\ge1$, then
$t^2/y_+^3=1/t\le1$, so our truncated logarithms make the second term
exactly $t^{1/3}$.  Enlarging the constant gives the stated theorem.
\end{proof}

\section{The exact near-clique regime}\label{sec:near}

We prove \cref{thm:nearClique}.

\begin{lemma}[Natural lists]\label{lem:naturalLists}
Let $Q$ be a clique of order $q=r-j$ in a graph $G$, where $1\le j<r$.  Give the vertices of $Q$ distinct colors, and let $X$ be a disjoint set of $j-1$ further colors.  For $v\notin Q$, put
\[
 L(v)=X\cup\bigl(Q\setminus N_Q(v)\bigr),
\]
where a vertex of $Q$ denotes its assigned color.  If $G-Q$ is $L$-colorable, then $G$ is $(r-1)$-colorable.

If $U\subseteq G-Q$ is not $L$-colorable, then $\chi(G[Q\cup U])\ge r$.
\end{lemma}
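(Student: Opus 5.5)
The proof is a direct combination of the coloring of $Q$ with a list coloring of $G-Q$.

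Let the total palette be $\cP_0=Q\cup X$. It has $q+(j-1)=r-1$ colors, where each vertex of $Q$ stands for its assigned color.

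For the first assertion, color every vertex of $Q$ by its own color; these colors are distinct, so $Q$ is properly colored. Now let $\phi$ be an $L$-coloring of $G-Q$, and use the union of the two colorings on $G$. Every color used lies in $\cP_0$, so at most $r-1$ colors appear. Edges inside $Q$ and edges inside $G-Q$ are proper by construction. Consider an edge $uv$ with $u\in Q$ and $v\notin Q$. Then $u\in N_Q(v)$, so the color $u$ is not in $Q\setminus N_Q(v)$. It is not in $X$ either, since $X$ is disjoint from the colors of $Q$. Hence $u\notin L(v)$, and in particular $\phi(v)\ne u$. So the combined coloring is a proper $(r-1)$-coloring of $G$.

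For the second assertion I argue by contraposition. Suppose $G[Q\cup U]$ has a proper coloring $\psi$ with at most $r-1$ colors. Since $Q$ is a clique, $\psi$ gives $Q$ exactly $q$ distinct colors. Rename the colors so that each $u\in Q$ receives its own assigned color. The remaining at most $r-1-q=j-1$ colors, namely those used only on $U$, can then be injectively renamed into $X$. After renaming, $\psi$ is still proper and takes values in $\cP_0$.

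Take $v\in U$. If $\psi(v)\in X$, then $\psi(v)\in L(v)$ automatically. If $\psi(v)$ is the color of some $u\in Q$, then properness forces $u\notin N_Q(v)$, so again $\psi(v)\in L(v)$. Therefore $\psi$ restricted to $U$ is an $L$-coloring of $G[U]$. This contradicts the assumption that $U$ is not $L$-colorable, so $\chi(G[Q\cup U])\ge r$.

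The only step needing care is the recoloring normalization in the second part: the colors of $Q$ must be matched to the assigned colors, and only then are the leftover colors sent into $X$. Counting shows at most $j-1$ leftover colors, so this injection exists. Everything else is immediate from the definitions.
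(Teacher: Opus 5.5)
Your proof is correct and follows essentially the same route as the paper. You combine the $Q$-coloring with an $L$-coloring for the first part. For the second part you relabel an $(r-1)$-coloring of $G[Q\cup U]$ so that $Q$ receives its assigned colors and the at most $j-1$ leftover colors go into $X$. You simply spell out the details, such as the injectivity of the relabeling, that the paper leaves implicit.
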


\begin{proof}
The first assertion is immediate: a color from $Q$ can be reused at $v$ exactly when $v$ is not adjacent to the corresponding vertex of $Q$.  For the second, any coloring of $G[Q\cup U]$ with at most $r-1$ colors uses $q$ distinct colors on $Q$ and at most $j-1$ additional colors.  Relabeling gives an $L$-coloring of $U$, a contradiction.
\end{proof}

\begin{proof}[Proof of \cref{thm:nearClique}]
Let $K_*$ be the constant in \cref{thm:densePalette} for $\rho=1/3$ and $A=4$.  Choose $\delta_0>0$ and then $c_0>0$ sufficiently small in terms of $K_*$.  Suppose, for a contradiction, that
\[
 \chi(G)\ge r,
 \qquad
 e(G)\le c_0r^3\log^2r,
 \qquad
 \omega(G)\ge(1-\delta_0)r,
 \qquad
 \tri(G)<\binom r3.
\]
If $G$ contains $K_r$, the last inequality is already impossible.  Thus
$q:=\omega(G)\le r-1$.  Let $Q$ be a maximum clique and put $j=r-q$.
Then
\[
 q=r-j\ge(1-\delta_0)r,
 \qquad
 1\le j\le\delta_0r.
\]
Use the natural lists from \cref{lem:naturalLists}.  For $v\notin Q$, write $a_v=|N_Q(v)|$.  The total number of triangles not contained in $Q$ is less than
\begin{equation}\label{eq:deficit}
 D:=\binom r3-\binom q3
 =\sum_{u=q}^{r-1}\binom u2
 \le j\binom r2
 \le\frac{\delta_0r^3}{2}.
\end{equation}
Every $v\notin Q$ creates $\binom{a_v}{2}$ triangles using two vertices of $Q$.

Let
\[
 Z=\{v\notin Q:a_v\ge q/2\}.
\]
Then
\[
 |Z|\binom{\lfloor q/2\rfloor}{2}<D,
\]
and hence, for sufficiently small $\delta_0$ and large $r$,
\begin{equation}\label{eq:Zbound}
 |Z|\le10\delta_0r.
\end{equation}
We claim that $G[Z]$ is $L$-colorable.  Otherwise take an inclusion-minimal non-$L$-colorable $U\subseteq Z$.  By \cref{lem:naturalLists}, $\chi(G[Q\cup U])\ge r$, while
\[
 |Q\cup U|\le(1+10\delta_0)r\le2r.
\]
Taking an induced $r$-critical subgraph and applying \cref{thm:FTZlinear} gives at least $\binom r3$ triangles, a contradiction.

Fix an $L$-coloring of $G[Z]$ and let $H=G-(Q\cup Z)$.  From each list on $H$, delete the colors used by its neighbors in $Z$; call the new list $L'(v)$.  Since $a_v<q/2$ on $H$, \eqref{eq:Zbound} gives
\[
 |L'(v)|
 \ge r-1-q/2-|Z|
 \ge(1/2-10\delta_0)r-1
 \ge\frac{r-1}{3}
\]
for sufficiently small $\delta_0$.  Thus the lists have density at least $1/3$ in the common palette of $p=r-1$ colors.

Discard isolated vertices temporarily.  The remaining graph has at most
\[
 2e(H)\le2c_0r^3\log^2r\le p^4
\]
vertices.  Moreover,
\[
 e(H)\le c_0r^3\log^2r,
 \qquad
 \tri(H)<D\le\frac{\delta_0r^3}{2}.
\]
Consequently, by \cref{lem:controlMonotonicity} and direct evaluation at
$c_0r^3\log^2r$,
\[
 \frac{e(H)^{1/3}}{\Log^{2/3}e(H)}
 \le C\Phi_E(c_0r^3\log^2r)
 =O(c_0^{1/3}r)+O(1),
 \qquad
 \tri(H)^{1/3}=O(\delta_0^{1/3}r).
\]
Choose $\delta_0$ and $c_0$ so that \cref{thm:densePalette} applies.  It gives an $L'$-coloring of the nonisolated part of $H$.  Since every remaining list has size at least $(r-1)/3\ge1$, each isolated vertex can then be assigned an arbitrary color from its own list.  Together with the coloring of $Z$, this is an $L$-coloring of $G-Q$, so \cref{lem:naturalLists} gives an $(r-1)$-coloring of $G$.  This contradicts $\chi(G)\ge r$.
\end{proof}

\section{Triangle stability for critical graphs}\label{sec:stability}

We first prove a stability refinement of the linear-order theorem.  Its surplus may depend on the order constant.  The next subsection removes that dependence.

\subsection{Fixed-order stability}

\begin{proposition}\label{prop:fixedStability}
For every $C>0$ and $\beta>0$ there are $\gamma>0$ and $s_0$ such that every $s$-critical graph $G$ with $s\ge s_0$,
\[
 |G|\le Cs,
 \qquad
 \omega(G)\le(1-\beta)s
\]
satisfies
\[
 \tri(G)\ge\binom s3+\gamma s^3.
\]
\end{proposition}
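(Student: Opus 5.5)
We will prove the proposition by contradiction and compactness, refining the excess argument behind \cref{thm:FTZlinear}. Fix $C,\beta$. Suppose there is a sequence of $s$-critical graphs $G$ with $s\to\infty$, $|G|\le Cs$, $\omega(G)\le(1-\beta)s$ and $\tri(G)\le\binom s3+o(s^3)$. First we will reduce to the case of bounded independence number. Since $G$ is $s$-critical, $\delta(G)\ge s-1$, and so $e(G)\le\binom{Cs}{2}$. We will split by the local triangle threshold $y=s^2/L$ exactly as in the proof of \cref{lem:smallCliqueSurplus}. Harris's vertex estimate (\cref{thm:HarrisVertex}) and \cref{lem:uniformPsi} then give a subset $B$ with $|B|<Ls$ and $\chi(G[B])\ge(1-\eta)s$. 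Deleting large independent sets produces an induced $F_0\subseteq G$ with $\alpha(F_0)\le K$ and $\chi(F_0)\ge(1-2\eta)s$. Here $\eta$ is a small parameter, to be chosen in terms of $\beta$.

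The core of the argument is the peeling procedure from the proof of \cref{lem:smallCliqueSurplus}. We repeatedly pass to a critical subgraph $G_i'$, remove a maximum independent set $I_i$ of size $k_i\le K$, and charge triangles at $v\in I_i$ via $e(G_i'[N(v)])$. Stage $i$ contributes
\[
 \sum_{v\in I_i}e(G_i'[N(v)])\ge k_i\,\ex\bigl(d,\ \alpha\le k_i\bigr)\approx \frac{(s'-i)^2}{2},
\]
since a graph with independence number $\le k$ on $d$ vertices has at least about $d^2/(2k)$ edges, with equality only for near-disjoint unions of $k$ cliques. The FTZ count gives total $\approx\sum_i(s'-i)^2/2\approx s^3/6=\binom s3$, which is tight, so we need a definite gain. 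We will aim for the following \emph{stability dichotomy}: for a positive proportion of stages $i\le s/2$, either (a) the neighborhood graph of some $v\in I_i$ has $e\ge(1+\theta)d^2/(2k_i)$, or (b) the degree exceeds $s'-i-1$ by $\theta s$, or (c) $k_i=1$. Each of (a) and (b) yields a cubic surplus $\gamma(\theta)s^3$ after summation. Option (c) means $G_i'$ is a clique of order $s'-i$. For $i\le\beta s/2$ this is impossible, since $s'-i>(1-\beta)s\ge\omega(G)$ once $\eta\le\beta/8$.

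The main obstacle is ruling out the situation in which every early stage is near-extremal, meaning each neighborhood is close to $k_i$ disjoint cliques of size $\approx(s'-i)/k_i$ and all degrees are near-minimum. In that case we will use the Tur\'an stability theorem (Erd\H{o}s--Simonovits type) for the complement: the neighborhood of $v\in I_i$ is $o(s^2)$-close to a disjoint union of $k_i$ cliques. We will try to show that these cliques glue with $v$ and the other vertices of $I_i$ into cliques of $G$. Since $I_i$ is a maximum independent set and every vertex outside it has a neighbor in it, we get $G_i'\approx$ a union of $k_i$ near-cliques covering it. Then $\chi(G_i')=s'-i$ forces one of these near-cliques to have order $\ge(s'-i)(1-o(1))$ unless they overlap substantially. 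The overlaps create extra triangles, which gives case (a). A near-clique of order $\ge(1-\beta/2)s$ with $o(s^2)$ missing edges has chromatic number close to its order; combined with criticality and $\omega\le(1-\beta)s$, this again forces $\Omega(s^2)$ edges outside the Tur\'an structure in neighborhoods. Making this last implication quantitative is the delicate step. If it resists, we will fall back on a removal-lemma/compactness argument: pass to a graphon limit of the normalized $G$ and derive the contradiction in the limit object from the equality case of the FTZ counting.
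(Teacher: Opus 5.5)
There is a genuine gap. Your setup is the right framework: the contradiction sequence, the FTZ peeling count, and using the clique bound to exclude $k_i=1$ at early stages. But the step you call the main obstacle is the entire content of the proposition, and the mechanism you sketch for it does not work.

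\emph{The clique-gluing step.} Several of its claims fail.
The claim that a near-clique of order at least $(1-\beta/2)s$ with $o(s^2)$ missing edges has chromatic number close to its order is false, even with bounded independence number. For example, $K_{2m}$ minus a perfect matching misses only $m$ edges and has chromatic number $m$.
The claim that $G_i'$ is essentially a union of $k_i$ near-cliques is unsupported. The neighbourhoods of the $k_i$ vertices of $I_i$ only cover $G_i'$ by about $k_i^2$ approximate cliques of order $(s-i)/k_i$.
Turán-extremal neighbourhoods are also compatible with chromatic number far below the degree. In the line graph of a triangle-free $\Delta$-regular graph, every neighbourhood is two disjoint copies of $K_{\Delta-1}$, yet $\chi\le\Delta+1$.
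So there is no reason for a large clique to appear. The contradiction in the paper is a colouring bound, not a clique.

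\emph{The missing idea} is the Molloy--Reed type degeneracy lemma, \cref{lem:degenerate}. The paper runs the peeling on $G$ itself and assigns an excess to every vertex, including the discarded sets $D_i=G_i\setminus G_i'$; the total excess is $o(s^3)$. It removes the $o(s)$ vertices of large excess from $G_\ell$ and orders the rest by decreasing layer. Small excess then yields:
\begin{enumerate}[label=\textup{(\roman*)}]
\item back-degrees at most $D=s-\ell+o(s)$;
\item at most $(\tfrac14+o(1))s^2\le(1-\delta)\binom D2$ edges in each back-neighbourhood when $k_i\ge2$;
\item back-neighbourhoods inside a clique of order at most $(1-\beta)s$ when $k_i=1$;
\item degree $o(s)$ for surviving discarded vertices;
\item a degree drop outside the last $\delta D/4$ vertices.
\end{enumerate}
The lemma then gives $\chi(H)\le(1-\zeta)D$, while criticality gives $\chi(H)\ge D-o(s)$. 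Your graphon fallback cannot replace this, since neither $\chi/|G|$ nor the peeling process is continuous under graphon limits.

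\emph{The Harris core reduction} in your first step is harmful, not merely unnecessary. Because $|G|\le Cs$, at most $Cs/K$ peeled sets have more than $K$ vertices. The paper therefore just starts at the first index $\ell\le1+\theta s$ with $k_{\ell-1}\le K$, where $K>C/\theta$ depends only on $C$ and $\beta$. Losing those layers costs nothing there, because the final contradiction compares the chromatic number and the degeneracy of the same graph $H$.

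In your scheme the contradiction is a triangle count against $\binom s3$. Passing to $F_0$ with $\chi(F_0)\ge(1-2\eta)s$ already costs about $\binom s3-\binom{(1-2\eta)s}3\approx\eta s^3$, so $F_0$ is only $O(\eta)$-near-extremal, not $o(1)$-near-extremal. The surplus you can guarantee from (a) or (b) at one vertex of a stage is of order $\theta s^2/k_i$, hence only at least $\theta s^2/K$, and your $K$ exceeds $L/\eta$. Summed over at most $s$ stages, the guaranteed gain is $O(\theta\eta s^3/L)$, which cannot pay for the $\eta s^3$ loss. Any stability constant depending on $K$ faces the same circularity between $\eta$ and $K$.
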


\begin{proof}
It is enough to treat $0<\beta\le1/2$, since replacing a larger $\beta$
by $1/2$ only weakens the clique hypothesis.  Suppose the assertion is false.  Then, for every integer $k\ge1$, there
are $s_k\ge k$ and an $s_k$-critical graph $G_k$ satisfying the hypotheses
but
\[
 \tri(G_k)<\binom{s_k}{3}+\frac{s_k^3}{k}.
\]
Passing to this sequence and relabeling $s_k$ and $G_k$ as $s$ and $G$,
respectively, gives $s\to\infty$ and
\[
 \tri(G)<\binom s3+o(s^3).
\]
By \cref{thm:FTZlinear}, the reverse inequality
$\tri(G)\ge\binom s3$ holds for all sufficiently large members of the
sequence.  Hence
\begin{equation}\label{eq:nearExtremal}
 \tri(G)=\binom s3+o(s^3).
\end{equation}
We adapt the proof of Proposition~4.2 in \cite{FTZ}.

Set $G_0=G$ and take $G_0'=G$, which is already $s$-critical.  Having
defined an induced $(s-i)$-critical graph $G_i'\subseteq G_i$, let $I_i$
be a maximum independent set in $G_i'$, put $k_i=|I_i|$, and set
$G_{i+1}=G_i'-I_i$; then choose an induced $(s-i-1)$-critical graph
$G_{i+1}'\subseteq G_{i+1}$.  For every $i\ge0$, put
\[
 D_i=G_i\setminus G_i'.
\]
Thus $D_0=\varnothing$ and
\[
 G_i=D_i\sqcup I_i\sqcup G_{i+1}.
\]
In particular, for every index $a$ for which the construction is active,
\begin{equation}\label{eq:layerPartition}
 V(G_a)=\bigsqcup_{i=a}^{s-1}\bigl(D_i\sqcup I_i\bigr).
\end{equation}
Moreover, $\chi(G_{i+1})=s-i-1$, the sequence $k_i$ is nonincreasing,
and
\[
 \sum_i k_i\le|G|\le Cs.
\]
The edge and triangle decompositions in \cite[(4.1)--(4.2)]{FTZ} are
\begin{align}
 e(G)&\ge\sum_i\sum_{v\in I_i}d_{G_i'}(v)
 +\frac12\sum_i\sum_{v\in D_i}d_{G_i}(v),\label{eq:edgeDec}\\
 \tri(G)&\ge\sum_i\sum_{v\in I_i}e(G_i'[N(v)])
 +\frac13\sum_i\sum_{v\in D_i}e(G_i[N(v)]).\label{eq:triDec}
\end{align}
For $v\in I_i$, define
\[
 \ex(v)=e(G_i'[N(v)])+\frac12d_{G_i'}(v)
 -\frac{(s-i-1)^2}{2k_i},
\]
and, for $v\in D_i$, define
\[
 \ex(v)=\frac13e(G_i[N(v)])+\frac14d_{G_i}(v).
\]
Tur\'an's theorem gives
\begin{align}
 \ex(v)&\ge
 \frac{d_{G_i'}(v)^2-(s-i-1)^2}{2k_i}
 &&(v\in I_i),\label{eq:exI}\\
 \ex(v)&\ge\frac{d_{G_i}(v)^2}{6k_{i-1}}
 &&(i\ge1,\ v\in D_i).\label{eq:exD}
\end{align}
In particular all excesses are nonnegative.  Combining
\eqref{eq:edgeDec} and \eqref{eq:triDec}, and using
\[
 \frac12\sum_{j=0}^{s-1}j^2
 =\binom s3+\frac12\binom s2,
\]
we obtain
\begin{equation}\label{eq:totalExcess}
 E:=\sum_{v\in G}\ex(v)
 \le
 \tri(G)+\frac12e(G)
 -\binom s3-\frac12\binom s2
 =o(s^3),
\end{equation}
where $e(G)=O_C(s^2)$ and \eqref{eq:nearExtremal} were used.

Put
\[
 \theta=\frac\beta{64}
\]
and choose an integer $K>C/\theta$.  Let $\ell\ge1$ be the least index
with $k_{\ell-1}\le K$.  Since the preceding independent sets have
size greater than $K$,
\begin{equation}\label{eq:ell}
 \ell\le1+\theta s.
\end{equation}
Define the explicit error parameters
\[
 \epsilon_s=\frac{E}{s^3},
 \qquad
 \tau_s=\sqrt{\epsilon_s}+\frac1s,
 \qquad
 \xi_s=20\left(
  \frac{\epsilon_s}{\tau_s}+\sqrt{6K\tau_s}+\frac1s
 \right).
\]
Then $\epsilon_s,\tau_s,\xi_s\to0$.  Let $H$ be the subgraph of
$G_\ell$ induced by the vertices whose excess is at most $\tau_ss^2$.
At most $(\epsilon_s/\tau_s)s$ vertices are removed, and hence
\begin{equation}\label{eq:Hchi}
 \chi(H)\ge s-\ell-\frac{\epsilon_s}{\tau_s}s.
\end{equation}
For every $i\ge\ell$, we have $k_i,k_{i-1}\le K$.  Thus
\eqref{eq:exI}--\eqref{eq:exD} give the concrete bounds
\begin{align}
 d_{G_i'}(v)
 &\le s-i-1+\sqrt{2K\tau_s}\,s
 &&(v\in I_i\cap V(H)),\label{eq:degI}\\
 d_{G_i}(v)
 &\le\sqrt{6K\tau_s}\,s
 &&(v\in D_i\cap V(H)).\label{eq:degD}
\end{align}

Intersecting \eqref{eq:layerPartition} with $V(H)$ partitions $V(H)$
into the sets $I_i\cap V(H)$ and $D_i\cap V(H)$ for $i\ge\ell$.
Order these sets by decreasing layer index, listing $I_i\cap V(H)$ before
$D_i\cap V(H)$ inside layer $i$.  If $v\in I_i$, every earlier neighbor
lies in $I_i\cup G_{i+1}\subseteq G_i'$, and hence
$N^+(v)\subseteq G_i'[N(v)]$.  If $v\in D_i$, every earlier neighbor lies
in $D_i\cup I_i\cup G_{i+1}\subseteq G_i$, and hence
$N^+(v)\subseteq G_i[N(v)]$.  Set
\begin{equation}\label{eq:explicitDegeneracyParameter}
 D=\left\lceil s-\ell-1+\xi_ss\right\rceil.
\end{equation}
Since $\ell\le1+\theta s$ and $\xi_s\to0$, for all sufficiently
large $s$ we have
\begin{equation}\label{eq:Dbounds}
 (1-2\theta)s\le D\le2s.
\end{equation}
Moreover $\xi_s\ge20\sqrt{6K\tau_s}$, so
\eqref{eq:degI}--\eqref{eq:degD} and the definition of $D$ imply
$|N^+(v)|\le D$ for every $v\in H$.

Let $b$ be the first index with $k_b=1$.  Then $G_b'=K_{s-b}$, so the
clique hypothesis implies
\begin{equation}\label{eq:b}
 b\ge\beta s.
\end{equation}
Together with \eqref{eq:ell}, this gives $b>\ell$ for all sufficiently
large $s$.  Put
\[
 \delta=\frac\beta{16}.
\]
If $v\in I_i$ and $i<b$, then $k_i\ge2$, and the definition of excess
gives
\[
 e(H[N^+(v)])
 \le\tau_ss^2+\frac{(s-i-1)^2}{4}
 \le\left(\tau_s+\frac14\right)s^2.
\]
Here $\delta\le1/32$ and $\theta\le1/128$, so
\[
 \frac{(1-\delta)(1-2\theta)^2}{2}>\frac25.
\]
By \eqref{eq:Dbounds}, after increasing the lower threshold for $s$ and
requiring $\tau_s\le1/20$, the last display is at most
$(1-\delta)\binom D2$.

If $v\in I_i$ and $i\ge b$, then $G_i'$ is a clique and
\[
 |N^+(v)|\le s-b-1\le(1-\beta)s.
\]
Furthermore,
\[
 (1-\delta)(1-2\theta)^2
 \ge1-\delta-4\theta
 =1-\frac\beta8
 >(1-\beta)^2.
\]
Together with \eqref{eq:Dbounds}, this gives
$\binom{|N^+(v)|}{2}\le(1-\delta)\binom D2$ for all sufficiently large
$s$.  Finally, if $v\in D_i$, then the definition of excess gives
\[
 e(H[N^+(v)])\le3\tau_ss^2.
\]
The same lower bound $(1-\delta)\binom D2>(2/5+o(1))s^2$, together with
$\tau_s\to0$, proves the required inequality in this case as well.
Thus
\begin{equation}\label{eq:localSparse}
 e(H[N^+(v)])\le(1-\delta)\binom D2
 \qquad(v\in H).
\end{equation}

It remains to verify the degree drop in \cref{lem:degenerate}.  Put
\[
 h=\left\lfloor\frac{\delta D}{16K}\right\rfloor.
\]
Since
\[
 K(h+1)\le\frac{\delta D}{16}+K,
\]
we have $K(h+1)\le\delta D/8$ once $D\ge16K/\delta$.  There are no vertices of $H$ in $D_i$ for
$\ell\le i\le\ell+h$.  Indeed, for $v\in D_i$ we have
$G_i=G_{i-1}'-I_{i-1}$, while the induced $(s-i+1)$-critical graph
$G_{i-1}'$ has minimum degree at least $s-i$.  Hence
\begin{equation}\label{eq:discardedCoreDegree}
 d_{G_i}(v)
 \ge d_{G_{i-1}'}(v)-k_{i-1}
 \ge s-i-k_{i-1}.
\end{equation}
By \eqref{eq:ell}, \eqref{eq:Dbounds}, and
$h\le\delta D/(16K)$, we have $\ell+h+K\le s/2$ for all sufficiently
large $s$.  Thus the right-hand side of
\eqref{eq:discardedCoreDegree} is at least $s/2$, whereas
\eqref{eq:degD} is less than $s/2$ once
$\sqrt{6K\tau_s}<1/2$.  This is a contradiction.

Every vertex outside these final layers has either been discarded or
belongs to $I_i$ with $i\ge\ell+h+1$.  Put
\[
 \varepsilon_* = \frac{\delta}{64K}.
\]
For a discarded vertex, \eqref{eq:degD}, \eqref{eq:Dbounds}, and
$\tau_s\to0$ give explicitly
\[
 |N^+(v)|\le\sqrt{6K\tau_s}\,s
 \le(1-\varepsilon_*)D
\]
for all sufficiently large $s$.  For $v\in I_i$ with
$i\ge\ell+h+1$, \eqref{eq:degI} and the definition of $D$ give
\begin{align*}
 D-|N^+(v)|
 &\ge h+\xi_ss-\sqrt{2K\tau_s}\,s-O(1)\\
 &\ge \frac{\delta D}{16K}-O(1)\\
 &\ge\frac{\delta D}{32K}.
\end{align*}
The second line uses
$h\ge\delta D/(16K)-1$ and
$\xi_s\ge20\sqrt{6K\tau_s}\ge\sqrt{2K\tau_s}$; the last line follows
once $D$ is sufficiently large.  Hence all but the last
$K(h+1)\le\delta D/8<\delta D/4$ vertices of the order satisfy
\[
 |N^+(v)|\le(1-\varepsilon_*)D.
\]
All hypotheses of \cref{lem:degenerate} now hold with the explicit
integer parameter $D$.  It follows that
$\chi(H)\le(1-\zeta)D$ for some fixed $\zeta>0$.  Since
$\xi_ss\ge20$, the definition of $D$ gives
$D\ge s-\ell\ge\chi(H)$.  On the other hand,
\eqref{eq:Hchi} and \eqref{eq:explicitDegeneracyParameter} give
\[
 0\le D-\chi(H)
 \le\left(\xi_s+\frac{\epsilon_s}{\tau_s}\right)s+O(1)=o(s),
\]
while $D\ge(1-2\theta)s$.  Thus $\chi(H)/D\to1$, a contradiction.
This proves the proposition.
\end{proof}

\subsection{Removing the dependence on the order constant}

We isolate the one-step calculation used in the proof of Theorem~1.5 of \cite{FTZ}.

\begin{lemma}[Modified-weight deletion]\label{lem:modifiedWeight}
There is an absolute $\varepsilon_1>0$ such that the following holds.
For every fixed $0<\varepsilon\le\varepsilon_1$ there exists
$s_0=s_0(\varepsilon)$ with the following property.  Put
\[
 \vartheta=\frac{\varepsilon^2}{4},
 \qquad
 C_0=1000\varepsilon^{-6}.
\]
Let $F$ be an $s$-critical graph with $s\ge s_0$ satisfying
\[
 |F|\ge C_0s,
 \qquad
 \omega(F)\ge\lceil\varepsilon s\rceil.
\]
Then $F$ has an independent set $I$ such that $\chi(F-I)=s-1$ and
\begin{equation}\label{eq:oneStep}
 \tri_F(I)
 \ge(1+\vartheta)\binom{s-1}{2}-m_F(I),
\end{equation}
where $\tri_F(I)$ is the number of triangles meeting $I$ and $m_F(I)$
is the number of edges incident with $I$.  Since $I$ is independent,
\[
 m_F(I)=\sum_{v\in I}d_F(v)=e_F(I,V(F)\setminus I).
\]
\end{lemma}

\begin{proof}
For an independent set $U$, give $U$ weight $|U|$ if it is disjoint
from every clique $Q$ with $|Q|\ge\lceil\varepsilon s\rceil$, and
weight $(1+\varepsilon^2/10)|U|$ otherwise.  Let $I$ have maximum modified
weight.  Since $F-I$ is a proper subgraph of the $s$-critical graph
$F$, we have $\chi(F-I)\le s-1$.  Since $I$ is independent,
$\chi(F)\le1+\chi(F-I)$, and therefore $\chi(F-I)=s-1$.

The proof of Theorem~1.5 in \cite{FTZ} considers two cases.  Suppose first that $I$ is disjoint from every clique $Q$ with
$|Q|\ge\lceil\varepsilon s\rceil$.
Let $J$ be a maximum-cardinality independent set.  Its modified weight
is at least $|J|=\alpha(F)$, whereas the present case gives
$w(I)=|I|$.  Maximality of $w(I)$ therefore yields
\[
 |I|=w(I)\ge w(J)\ge\alpha(F).
\]
Since $I$ is independent, equality holds:
\begin{equation}\label{eq:modifiedCaseOneAlpha}
 |I|=\alpha(F)\ge\frac{|F|}{\chi(F)}\ge C_0.
\end{equation}
Let $Q$ be a maximum clique, so
$|Q|\ge\lceil\varepsilon s\rceil$.  We verify the consequence of
modified-weight maximality that will be used below.  If some $u\in Q$
had fewer than $(\varepsilon^2/20)|I|$ neighbors in $I$, then
\[
 I_u=(I\setminus N(u))\cup\{u\}
\]
would be independent and would meet the large clique $Q$.  Therefore
\begin{align*}
 w(I_u)
 &\ge\left(1+\frac{\varepsilon^2}{10}\right)
       \left(1-\frac{\varepsilon^2}{20}\right)|I|\\
 &>|I|=w(I),
\end{align*}
where the strict inequality holds for
$0<\varepsilon\le\varepsilon_1$ after decreasing the absolute constant
$\varepsilon_1$ if necessary.  This contradicts the choice of $I$.
Hence every $u\in Q$ has at least
$(\varepsilon^2/20)|I|$ neighbors in $I$, and consequently
\[
 \frac{e(I,Q)}{|I|}
 \ge\frac{\varepsilon^2|Q|}{20}
 \ge\frac{\varepsilon^3s}{20}.
\]
Convexity and \eqref{eq:modifiedCaseOneAlpha} now give
\[
 \tri_F(I)
 \ge\sum_{v\in I}\binom{|N(v)\cap Q|}{2}
 \ge |I|\binom{e(I,Q)/|I|}{2}.
\]
For all sufficiently large $s$, we have $e(I,Q)/|I|\ge\varepsilon^3s/20\ge1$,
so $x\mapsto\binom{x}{2}$ is increasing throughout the relevant range.
Using the two lower bounds above, the last expression is at least
\begin{align*}
 1000\varepsilon^{-6}
 \binom{\varepsilon^3s/20}{2}
 &=\frac54s^2-25\varepsilon^{-3}s.
\end{align*}
Since $2\binom{s-1}{2}=s^2-3s+2$, this is at least
$2\binom{s-1}{2}$ for fixed $\varepsilon$ and all sufficiently large
$s$.  This is stronger than \eqref{eq:oneStep}.

Suppose now that some $w\in I$ lies in a clique of order at least
$\lceil\varepsilon s\rceil$.  A maximum independent set has modified weight at
least $\alpha(F)$, while $w(I)=(1+\varepsilon^2/10)|I|$; hence
\begin{equation}\label{eq:modifiedAlphaBound}
 \alpha(F)\le(1+\varepsilon^2/10)|I|.
\end{equation}
For $v\in I\setminus\{w\}$, Tur\'an's theorem in $F[N(v)]$ and
$d(v)\ge s-1$ yield
\[
 e(F[N(v)])
 \ge\frac{(s-1)^2}{2\alpha(F)}-\frac{d(v)}2.
\]
The vertex $w$ lies in at least
$(\varepsilon^2/2)\binom s2$ triangles for all sufficiently large $s$.
Since $I$ is independent, a triangle meets at most one vertex of $I$.
Summing the preceding estimates and using
\eqref{eq:modifiedAlphaBound}, we obtain
\begin{align*}
 \tri_F(I)
 &\ge
 \frac{(|I|-1)(s-1)^2}
      {2(1+\varepsilon^2/10)|I|}
 +\frac{\varepsilon^2}{2}\binom s2
 -\frac12\sum_{v\in I\setminus\{w\}}d(v)\\
 &\ge
 \frac{(|I|-1)(s-1)^2}
      {2(1+\varepsilon^2/10)|I|}
 +\frac{\varepsilon^2}{2}\binom s2
 -m_F(I).
\end{align*}
Here the last line uses
$\frac12\sum_{v\in I\setminus\{w\}}d(v)\le m_F(I)$.
Moreover $|I|\ge C_0/(1+\varepsilon^2/10)$, and hence, after division by
$\binom s2$, the first positive term is at least
\[
 \frac{|I|-1}{|I|}\frac{s-1}{s}
 \frac1{1+\varepsilon^2/10}
 \ge1-\frac{\varepsilon^2}{5}
\]
for all sufficiently large $s$.  Adding the
$\varepsilon^2/2$ contribution gives
\[
 \tri_F(I)
 \ge(1+\varepsilon^2/4)\binom s2-m_F(I)
 \ge(1+\vartheta)\binom{s-1}{2}-m_F(I),
\]
as required.
\end{proof}

\begin{proof}[Proof of \cref{thm:uniformStability}]
It is enough to take $0<\beta\le1/2$.  Fix once and for all
$0<\varepsilon\le\min\{\varepsilon_0,\varepsilon_1\}$, with the
constants from
\cref{lem:smallCliqueSurplus,lem:modifiedWeight}.  Put
\[
 \vartheta=\frac{\varepsilon^2}{4},
 \qquad
 a=\frac{\varepsilon^{2/3}}4,
 \qquad
 C_0=1000\varepsilon^{-6}.
\]
Then $a^3=\vartheta/16$.  Let $s_0(\varepsilon)$ be the threshold in
\cref{lem:modifiedWeight}, and assume that $s$ is large enough that
$as\ge s_0(\varepsilon)$.  Thus every application of
\cref{lem:modifiedWeight} before stopping is legitimate.

Let $C>0$ be fixed and let $G$ be an $s$-critical graph with
$|G|\le Cs$ and $\omega(G)\le(1-\beta)s$.  Start with
$G_0=G_0'=G$.  At every later stage $i\ge1$, choose an induced
$(s-i)$-critical subgraph $G_i'\subseteq G_i$.  Stop at the first index
$j$ for which at least one of the following
occurs:
\begin{enumerate}[label=\textup{(\alph*)}]
\item $s-j\le as$;
\item $\omega(G_j')<\varepsilon(s-j)$;
\item $|G_j'|<C_0(s-j)$.
\end{enumerate}
Before stopping, apply \cref{lem:modifiedWeight}, choose the resulting
independent set $I_i$, and put $G_{i+1}=G_i'-I_i$.

For each stage put
\[
 m_i=m_{G_i'}(I_i)
    =e_{G_i'}(I_i,V(G_i')\setminus I_i)
    =e(G_i')-e(G_{i+1}),
\]
where the last identity uses that $I_i$ is independent.  Consequently
\begin{align*}
 \sum_{i=0}^{j-1}m_i
 &=e(G_0')-e(G_j)
   +\sum_{i=1}^{j-1}\bigl(e(G_i')-e(G_i)\bigr)\\
 &\le e(G),
\end{align*}
because $G_0'=G$ and $G_i'\subseteq G_i$.  The triangles counted at
different stages are disjoint.  Applying \eqref{eq:oneStep} with chromatic
parameter $s-i$ therefore gives the following telescoping lower bound; the
binomial sum itself telescopes exactly:
\begin{align}
 \tri(G)-\tri(G_j')
 &\ge(1+\vartheta)
   \sum_{i=0}^{j-1}\binom{s-i-1}{2}-\sum_{i=0}^{j-1}m_i\notag\\
 &\ge(1+\vartheta)
   \left(\binom s3-\binom{s-j}{3}\right)-e(G).
 \label{eq:telescoping}
\end{align}
Write $u=s-j$.  Since $|G|\le Cs$, we have $e(G)=O_C(s^2)$.

Define
\[
 \lambda_a=\frac{\vartheta-(1+\vartheta)a^3}{6},
 \qquad
 \lambda_b=\min\left\{\frac{\vartheta}{6},\sigma_0\right\},
 \qquad
 \lambda_{c1}=\frac{\vartheta}{6}
      \left(1-\left(1-\frac\beta2\right)^3\right).
\]
These constants are positive; for $\lambda_a$ this follows from
$a^3=\vartheta/16$.

If (a) occurs, then $u\le as$, and
\begin{align*}
 \tri(G)-\binom s3
 &\ge \vartheta\binom s3-(1+\vartheta)\binom u3-O_C(s^2)\\
 &\ge \lambda_a s^3-O_C(s^2).
\end{align*}

Assume now that (a) does not occur, so $u\ge as$.  If (b) occurs, then
\[
 e(G_j')\le e(G)\le\frac{C^2}{2a^2}u^2.
\]
The graph $G_j'$ is $u$-critical and has
$\omega(G_j')<\varepsilon u\le\varepsilon_0u$.  Hence
\cref{lem:smallCliqueSurplus} gives
\[
 \tri(G_j')\ge\binom u3+\sigma_0u^3.
\]
Writing $x=u/s\in[a,1]$ and using \eqref{eq:telescoping}, we obtain
\begin{align*}
 \tri(G)-\binom s3
 &\ge \vartheta\left(\binom s3-\binom u3\right)
      +\sigma_0u^3-O_C(s^2)\\
 &\ge \left(\frac\vartheta6(1-x^3)+\sigma_0x^3\right)s^3-O_C(s^2)\\
 &\ge \lambda_b s^3-O_C(s^2).
\end{align*}

It remains to consider (c), with (a) and (b) false.  If
$j\ge\beta s/2$, then condition (c) gives $|G_j'|<C_0u$.  Thus
\cref{thm:FTZlinear} applies with the fixed order constant $C_0$ and gives
$\tri(G_j')\ge\binom u3$.  Since $u\le(1-\beta/2)s$,
\[
 \tri(G)-\binom s3
 \ge\vartheta\left(\binom s3-\binom u3\right)-O_C(s^2)
 \ge\lambda_{c1}s^3-O_C(s^2).
\]

Finally suppose $j<\beta s/2$.  Then $u>(1-\beta/2)s$ and
\[
 \omega(G_j')\le\omega(G)\le(1-\beta)s
 \le(1-\beta/2)u.
\]
By (c), $|G_j'|<C_0u$.  Apply
\cref{prop:fixedStability} with $C_0$ and $\beta/2$ to obtain
\[
 \tri(G_j')\ge\binom u3+\gamma_0u^3,
\]
where $\gamma_0>0$ depends only on $\beta$ and the fixed choice of
$\varepsilon$.  Put
\[
 \lambda_{c2}=\gamma_0\left(1-\frac\beta2\right)^3>0.
\]
Inserting this estimate into \eqref{eq:telescoping} gives
\[
 \tri(G)-\binom s3
 \ge\vartheta\left(\binom s3-\binom u3\right)
      +\gamma_0u^3-O_C(s^2)
 \ge\lambda_{c2}s^3-O_C(s^2).
\]

Set
\[
 \gamma(\beta)=\frac12
 \min\{\lambda_a,\lambda_b,\lambda_{c1},\lambda_{c2}\}>0.
\]
The four estimates above show that
$\tri(G)\ge\binom s3+\gamma(\beta)s^3$ once
$s\ge s_0(C,\beta)$ is large enough to absorb the terms $O_C(s^2)$.
Thus the lower threshold may depend on $C$, while the surplus does not.
\end{proof}

\section{Assembly of the proof}\label{sec:assembly}

\begin{proof}[Proof of \cref{thm:main}]
Let $c_0,\delta_0$ be supplied by \cref{thm:nearClique}.  Put
\[
 \beta=\delta_0/4
\]
and let $\gamma=\gamma(\beta)$ be supplied by \cref{thm:uniformStability}.  Choose $\eta>0$ sufficiently small that
\begin{equation}\label{eq:etaChoices}
 \eta\le\delta_0/4
 \qquad\text{and}\qquad
 \binom{\lfloor(1-\eta)r\rfloor}{3}
 +\gamma\lfloor(1-\eta)r\rfloor^3
 \ge\binom r3
\end{equation}
for all sufficiently large $r$.  This is possible because the deficit in the binomial term is $O(\eta r^3)$, whereas $\gamma$ is independent of the order constant in the sparse core.

Let $c_\eta,L_\eta$ be supplied by \cref{thm:core} and put
\[
 c=\min\{c_0,c_\eta\}.
\]
Suppose, for a contradiction, that $G$ satisfies
\[
 \chi(G)\ge r,
 \qquad
 e(G)\le cr^3\log^2r,
 \qquad
 \tri(G)<\binom r3.
\]
By \cref{thm:core}, $G$ has an induced subgraph $H$ with
\[
 s:=\chi(H)\ge(1-\eta)r,
 \qquad
 |H|\le L_\eta r.
\]
Choose an induced $s$-critical subgraph $J\subseteq H$.  Then
\[
 |J|\le\frac{L_\eta}{1-\eta}s.
\]

If $\omega(J)\le(1-\beta)s$, apply \cref{thm:uniformStability}.
The function $x\mapsto\binom{x}{3}+\gamma x^3$ is increasing on integers
$x\ge3$, and $s\ge(1-\eta)r$ implies
$s\ge\lfloor(1-\eta)r\rfloor$.  Hence \eqref{eq:etaChoices} gives
\[
 \tri(G)\ge\tri(J)
 \ge\binom s3+\gamma s^3
 \ge\binom r3,
\]
a contradiction.

Otherwise
\[
 \omega(G)\ge\omega(J)>(1-\beta)s
 \ge(1-\beta)(1-\eta)r
 \ge(1-\delta_0)r.
\]
Since $e(G)\le c_0r^3\log^2r$, \cref{thm:nearClique} again gives $\tri(G)\ge\binom r3$, a contradiction.  This proves the theorem for all sufficiently large $r$.

For the finitely many integers $3\le r<r_0$, decrease $c$ once more.
For each such $r$ for which a counterexample exists, let
\[
 M_r=\min\left\{e(H):\chi(H)\ge r,
                  \ \tri(H)<\binom r3\right\}.
\]
Then $M_r$ is a positive integer.  Put
\[
 c_{\mathrm{fin}}
 =\frac12\min_{\substack{3\le r<r_0\\ M_r\text{ defined}}}
   \frac{M_r}{r^3\log^2r},
\]
with any positive value if the indexing set is empty, and replace $c$ by
$\min\{c,c_{\mathrm{fin}}\}$.  Now
$e(G)\le cr^3\log^2r<M_r$ for every remaining $r$, so equality at the
finite minimum cannot occur.  This completes the proof for every
$r\ge3$.
\end{proof}


\begin{thebibliography}{99}

\bibitem{FTZ}
J.~Fox, J.~Tidor, and S.~Zhang,
\newblock Triangle Ramsey numbers of complete graphs,
\newblock \emph{J. Combin. Theory Ser. B} \textbf{176} (2026), 268--286.
\newblock \href{https://doi.org/10.1016/j.jctb.2025.08.004}{doi:10.1016/j.jctb.2025.08.004}.

\bibitem{ErdosSos}
P.~Erd\H{o}s and V.~T. S\'os,
\newblock Some remarks on Ramsey's and Tur\'an's theorem,
\newblock in \emph{Combinatorial Theory and its Applications I--III},
North-Holland, Amsterdam, 1970, pp.~395--404.

\bibitem{Harris}
D.~G. Harris,
\newblock Some results on chromatic number as a function of triangle count,
\newblock \emph{SIAM J. Discrete Math.} \textbf{33} (2019), 546--563.
\newblock \href{https://doi.org/10.1137/17M115918X}{doi:10.1137/17M115918X}.


\bibitem{Kim}
J.~H. Kim,
\newblock The Ramsey number $R(3,t)$ has order of magnitude $t^2/\log t$,
\newblock \emph{Random Structures Algorithms} \textbf{7} (1995), 173--207.
\newblock \href{https://doi.org/10.1002/rsa.3240070302}{doi:10.1002/rsa.3240070302}.

\bibitem{Nilli}
A.~Nilli,
\newblock Triangle-free graphs with large chromatic numbers,
\newblock \emph{Discrete Math.} \textbf{211} (2000), 261--262.
\newblock \href{https://doi.org/10.1016/S0012-365X(99)00109-0}{doi:10.1016/S0012-365X(99)00109-0}.

\bibitem{PoljakTuza}
S.~Poljak and Z.~Tuza,
\newblock Bipartite subgraphs of triangle-free graphs,
\newblock \emph{SIAM J. Discrete Math.} \textbf{7} (1994), 307--313.
\newblock \href{https://doi.org/10.1137/S0895480191196824}{doi:10.1137/S0895480191196824}.

\bibitem{Vu}
V.~H. Vu,
\newblock A general upper bound on the list chromatic number of locally sparse graphs,
\newblock \emph{Combin. Probab. Comput.} \textbf{11} (2002), 103--111.
\newblock \href{https://doi.org/10.1017/S0963548301004898}{doi:10.1017/S0963548301004898}.

\end{thebibliography}
\end{document}